\theoremstyle{plain}
\newtheorem{theorem}{Theorem}[section]
\newtheorem{proposition}[theorem]{Proposition}
\theoremstyle{definition}
\newtheorem{definition}[theorem]{Definition}
\newtheorem{remark}[theorem]{Remark}
\newtheorem{example}[theorem]{Example}
\numberwithin{equation}{section}
\let\c@theorem\c@table\makeatother
\newcommand{\alg}{\mathcal{A}}
\newcommand{\Aut}{\mathop{\mathrm{Aut}}}
\newcommand{\be}{\mathbf{e}}
\newcommand{\bJ}{\mathbf{1}}
\newcommand{\br}{\mathbf{r}}
\newcommand{\bs}{\mathbf{s}}
\newcommand{\bu}{\mathbf{u}}
\newcommand{\bv}{\mathbf{v}}
\newcommand{\bw}{\mathbf{w}}
\newcommand{\bx}{\mathbf{x}}
\newcommand{\by}{\mathbf{y}}
\newcommand{\bZ}{\mathbf{0}}
\newcommand{\C}{\mathbb{C}}
\newcommand{\cC}{\mathcal{C}}
\newcommand{\cF}{\mathcal{F}}
\newcommand{\cP}{\mathcal{P}}
\newcommand{\cW}{\mathcal{W}}
\newcommand{\comp}{\mathbin{\circ}}
\newcommand{\diag}{\mathop{\mathrm{diag}}\nolimits}
\newcommand{\fG}{\mathfrak{G}}
\newcommand{\im}{\mathop{\mathrm{im}}}
\newcommand{\Id}{\mathrm{Id}}
\newcommand{\one}[1]{\bJ_{#1 \times #1}}
\newcommand{\R}{\mathbb{R}}
\newcommand{\rd}{\mathrm{d}}
\newcommand{\spec}{\sigma}
\newcommand{\std}{\,\rd}
\newcommand{\stratumsymb}{\mathcal{S}}
\newcommand{\stratum}[1]{\stratumsymb_{#1}}
\newcommand{\wg}{\cW_\fG}
\newcommand{\up}{\Sigma^\uparrow_\pi}
\newcommand{\down}{\Sigma^\downarrow_\pi}
\newcommand{\ccup}{\Theta^\uparrow_\pi}
\newcommand{\ccdown}{\Theta^\downarrow_\pi}
\begin{document}
\title{Matrix compression along isogenic blocks}

\author[A.~Belton]{Alexander Belton$^*$}
\address{$^*$Lancaster University, Lancaster, UK.
{\tt a.belton@lancaster.ac.uk}}

\author[D.~Guillot]{Dominique Guillot$^\dag$}
\address{$^\dag$University of Delaware, Newark, DE, USA.
{\tt dguillot@udel.edu}}

\author[A.~Khare]{Apoorva Khare$^\ddag$}
\address{$^\ddag$Indian Institute of Science, Bangalore, India.
{\tt khare@iisc.ac.in}}

\author[M.~Putinar]{Mihai Putinar$^\mathsection$}
\address{$^\mathsection$University of California at Santa Barbara, CA,
USA and Newcastle University, Newcastle upon Tyne, UK.
{\tt mputinar@math.ucsb.edu}, {\tt mihai.putinar@ncl.ac.uk}}

\date{17th March 2022}

\begin{abstract}
A matrix-compression algorithm is derived from a novel isogenic block
decomposition for square matrices. The resulting compression and
inflation operations possess strong functorial and spectral-permanence
properties. The basic observation that Hadamard entrywise functional
calculus preserves isogenic blocks has already proved to be of
paramount importance for thresholding large correlation matrices. 
{ The proposed isogenic stratification of the set of 
complex matrices bears similarities to the Schubert cell stratification of a homogeneous algebraic 
manifold}.
An
array of { potential} applications to current investigations in computational
matrix analysis is {briefly mentioned}, touching concepts such as symmetric
statistical models, hierarchical matrices and coherent matrix
organization induced by partition trees.
\end{abstract}

\keywords{Structured matrix, Hadamard calculus, matrix compression,
conditional expectation}

\subjclass[2010]{65F55 (primary); %
65F45, 15A86, 47A60, 14M15 (secondary)}

\maketitle

\tableofcontents

%{{{1 Section 1 - Introduction
\section{Introduction}

\subsection{Prelude}\label{Sprelude}

In a previous paper \cite{BGKP-fixeddim} we were concerned with the
study of entrywise positivity preservers acting on the cone of $N
\times N$ positive semidefinite matrices. By a classical observation
of Loewner, as developed by Horn in \cite[Theorem~1.2]{horn}, if the
real-valued function $f$ defined on the positive half line preserves
positive semidefiniteness for all $N \times N$ matrices via entrywise
Hadamard calculus (for brevity, we say that $f$
\emph{preserves positivity}), then $f$ is necessarily of class
$C^{(N - 3)}$ and has non-negative derivatives up to this order.
Moreover, the derivatives of order up to $N - 1$ are non-negative if
they exist. In the case where $f$ is known to be analytic, preserving
positivity only for rank-one $N \times N$ matrices already implies
that the first $N$ non-zero Taylor coefficients of $f$ are strictly
positive. In particular, this holds for $f$ a polynomial. { Henceforth $A^{\circ k}$
denotes the entrywise (fractional) power of a matrix $A$}.

The central result in our previous work \cite{BGKP-fixeddim}
(subsequently refined by one of us with Tao \cite{KT}) is a closed
form for the greatest lower bound for the only possible negative
coefficient $c_M$ in the linear pencil
\begin{equation}\label{Epencil}
p( A ) = c_0 A^{\circ n_0} + c_1 A^{\circ n_1} + \cdots + %
c_{N - 1} A^{\circ n_{N - 1}} + c_M A^{\circ M},
\end{equation}
with real powers $n_0 < \cdots < n_{N - 1} < M$ lying in
$\{ 0, 1, 2, \ldots \} \cup [ N - 2, \infty )$, such that $p( A )$ is
positive semidefinite for any positive semidefinite $N\times N$ matrix
$A$ with entries in a specific domain in $\C$. The threshold value for
the coefficient~$c_M$ was obtained via a combinatorial formula
involving Schur polynomials.

A more natural matrix-theoretic approach to obtain the greatest lower
bound for $c_M$ in (\ref{Epencil}) is to study the spectrum of the
Rayleigh quotient~$R$ of two quadratic forms,
\begin{equation}\label{ErayleighIntro}
R = R( A, \bu ) := \frac{\bu^* A^{\circ M} \bu}%
{\bu^* ( c_0 A^{\circ n_0} + c_1 A^{\circ n_1} + \cdots + %
c_{N - 1} A^{\circ n_{N - 1}} ) \bu}.
\end{equation}
Finding the maximum of $R$ with respect to both $\bu$ and $A$ indeed
yields the critical threshold for preserving positivity as mentioned
above; we refer the reader to~\cite[Section~4]{BGKP-fixeddim}
and~\cite[Proposition 11.2]{KT}. One first obtains a closed-form
expression for $R_A := \sup_{\bu \in \R^N} R( A, \bu )$ and then
maximizes with respect to $A$. However, a significant difficulty
arises when using this approach: 

{\it the function $A \mapsto R_A$ is not
continuous on the set of $N \times N$ positive semidefinite
matrices.}

 As shown in \cite{BGKP-fixeddim}, discontinuities occur
precisely where constant-block structures emerge in $A$. The discovery
of this phenomenon led the authors to investigate, in
\cite[Section~5]{BGKP-fixeddim}, the simultaneous kernel
$\cap_{k \geq 0} A^{\circ k}$ of the Hadamard powers of an arbitrary
positive semidefinite matrix $A$, and a related Schubert cell-type
stratification of the set of all positive semidefinite matrices of a
given size.

A series of remarkable properties of this stratification is unveiled
in the present work. In order to facilitate access to the concepts and
notation in subsequent sections, we start by explaining informally the
main ideas with the help of a simple example. Consider the
$6 \times 6$ real symmetric matrix
\[
A := \begin{pmatrix}
\phantom{-}1 & \phantom{-}1 & \phantom{-}2 & -3 & \phantom{-}2 & \phantom{-}1\\
\phantom{-}1 & \phantom{-}1 & \phantom{-}2 & -3 & \phantom{-}2 & \phantom{-}1\\
\phantom{-}2 & \phantom{-}2 & \phantom{-}6 & -5 & \phantom{-}6 & \phantom{-}2\\
-3 & -3 & -5 & \phantom{-}10 & -5 & -3\\
\phantom{-}2 & \phantom{-}2 & \phantom{-}6 & -5 & \phantom{-}6 & \phantom{-}2\\
\phantom{-}1 & \phantom{-}1 & \phantom{-}2 & -3 & \phantom{-}2 & \phantom{-}1
\end{pmatrix}.
\]
Each row and column of $A$ is constant on the sets in the partition
\[
\pi = \bigl\{ \{ 1, 2, 6 \}, \ \{ 3, 5 \}, \ \{ 4 \} \bigr\}
\]
of the index set $\{ 1, 2, 3, 4, 5, 6 \}$. This observation gives at
once some null vectors for the matrix $A$: those annihilated by the
pattern of the partition, $( 1, 1, 0, 0, 0, 1 )$,
$( 0, 0, 1, 0, 1, 0 )$ and $( 0, 0, 0, 1, 0, 0 )$. In fact, the kernel
of every positive Hadamard power of $A$, including~$A$ itself, is
precisely this set of vectors.

Given any matrix $A' \in \C^{N \times N}$, there is a unique coarsest
partition $\pi'$ of $\{ 1, \ldots, N \}$ such that $A'$ is constant on
the blocks determined by $\pi'$. We denote by $\stratum{\pi'}$ the set
of all matrices having this constant-block structure. This naturally
yields a stratification of the space of complex matrices,
\begin{equation}\label{stratification}
\C^{N \times N} = \bigsqcup_{\pi'} \stratum{\pi'},
\end{equation}
where the disjoint union is taken over all partitions of
$\{ 1, \ldots, N \}$. We refer to each set $\stratum{\pi'}$ as a
\emph{stratum}, and call the above decomposition the
\emph{isogenic block stratification} of $\C^{N \times N}$.

We explore several applications of the stratification below. A natural
operation is the compression $\down$ of each constant block to a
single entry: with $A$ and $\pi$ as above,
\[
\down( A ) = \begin{pmatrix}
\phantom{-}1 & \phantom{-}2 & -3\\
\phantom{-}2 & \phantom{-}6 & -5\\
-3 & -5 & \phantom{-}10
\end{pmatrix}.
\]
This straightforward transformation has some remarkable and very
useful properties. For example, the spectrum
of~$D_\pi^{1 / 2} \down( A ) D_\pi^{1 / 2}$ has the same non-zero
eigenvalues, counting multiplicities, as the original matrix~$A$,
where the matrix~$D_\pi = \diag( 3, 2, 1 )$ simply reflects the size
of the blocks of~$\pi$. The spectrum of~$A$ has~$3$ additional zero
eigenvalues, corresponding to its kernel.

An even more striking use of compression is for the computation of the
Moore--Penrose generalized inverse: with $A$ as above, this is
\[
A^\dagger = \frac{1}{36} \begin{pmatrix}
\phantom{-}140 & \phantom{-}140 & -30 & \phantom{-}96 & -30 & \phantom{-}140\\
\phantom{-}140 & \phantom{-}140 & -30 & \phantom{-}96 & -30 & \phantom{-}140\\
-30 & -30 & \phantom{-}9 & -18 & \phantom{-}9 & -30\\
\phantom{-}96 & \phantom{-}96 & -18 & \phantom{-}72 & -18 & \phantom{-}96\\
-30 & -30 & \phantom{-}9 & -18 & \phantom{-}9 & -30\\
\phantom{-}140 & \phantom{-}140 & -30 & \phantom{-}96 & -30 & \phantom{-}140
\end{pmatrix}.
\]
In general, there is a fast method to compute the pseudo-inverse of a
block matrix:
\begin{equation}
A^\dagger = \up( D_\pi^{-1} \down( A )^{-1} D_\pi^{-1} ),
\end{equation}
where the inflation map $\up$ expands a matrix into one with blocks
which are constant for the index-set partition $\pi$.

While the isogenic block structure considered in the present article
arose from the entrywise matrix operations which preserve positivity,
and is highly relevant to them, its simplicity and versatility invites
the exploration of its potential utility beyond pure mathematics, to
other areas of much current interest. Compression, fast computation
and analysis of structured matrices are of great importance at
present. For example, large positive semidefinite matrices make an
important appearance in the analysis of big data, where they occur as
covariance or correlation matrices of random vectors. In such
settings, the rows and columns of the matrix are ordered according to
the ordering of the underlying variables. Entrywise operations such as
thresholding are natural in that setting (see \cite{Rothman2009}, for
example), as are grouping and averaging variables, and the related
block operations on the corresponding matrices. We conclude the paper
by { indicating} several such applications.

On the other hand, two notable connections with established areas of mathematics
should be mentioned. The partition \eqref{stratification} of $\mathbb{C}^{N \times N}$ into subsets $\mathcal{S}_\pi$
-- and more generally, as in Proposition \ref{Pstrata} -- is akin to the stratification of the flag variety in {\it Schubert calculus}.
It too emerges from a group action -- of $G^{N \times N}$ acting entrywise on $\mathbb{C}^{N \times N}$.

Second, the matrix compression and inflation operations we focus on are a very specific instance of a
\emph{conditional expectation} map. Originating in probability theory, conditional expectations were generalized 
to the non-commutative setting of operator algebras with hard to underestimate benefits.
From the ample bibliography on the subject we only mention an inspired, very recent survey \cite{Blecher}
which presents Blecher and Read's extension of positivity beyond the well-known $C^*$-algebraic setting.

In the present work, aimed at a large audience of practitioners of matrix analysis, we do not pursue in detail these two paths.

\subsection{Summary of contents}

In Section~\ref{Sstrat}, we study the stratification of complex
matrices with blocks that lie in single orbits under the action of a
multiplicative subgroup $G$ of the group of units~$\C^\times$, which
generalises the case $G = \{ 1 \}$ discussed above. In
Section~\ref{Spowers}, we show that the same stratification naturally
emerges from studying simultaneous entrywise powers or functions of
matrices that are positive semidefinite; in fact, we do not even
require the positivity of determinants of size $4$ or more.

In Section~\ref{Sentcalc}, we investigate the compression
$\down( A )$, which collapses every block of the matrix $A$ to a
single entry, equal to the average over that block. Compression and
its right inverse, inflation, provide an effective tool for operating
inside each stratum or its closure. Section~\ref{Sfuncalc} contains
results that describe how compression and inflation relate to spectra
and the functional calculus. These exploit the observation that a
weighted version of the compression map provides a $*$-isomorphism of
unital $C^*$~algebras between the stratum given by a partition with
$m$ elements and $\C^{m \times m}$.

The final section, Section~\ref{Srams}, is devoted to providing
examples and links to other areas of matrix analysis. Although our
study was prompted by the structure of matrices arising in the
statistics of big data, the isogenic stratification we introduce here
is relevant to sparse-matrix compression procedures and related fast
computational tools. Indeed, the averaging on isogenic blocks we
propose is a simple and efficient method of eliminating the redundancy
of operations involving this class of structured matrices. The rapidly
developing theory of \emph{hierarchical matrices}
\cite{Hackbusch-1999,BGH,Hackbusch-2016} is a natural framework where our
study finds deep resonances. A second setting where isogenic stratifications 
are very natural is that of \emph{coherent matrix organization}
\cite{GC,Gross,CBSW,MTCCK}. An immediate consequence of our technique is
the quick computation of spectra and singular {values} for
structured matrices; other possible applications are the efficient
solution of large linear systems and certifying the stability of
evolution semigroups.

\subsection{Acknowledgements}
The authors extend their thanks to the International Centre for
Mathematical Sciences, Edinburgh, where part of this work was carried
out, and to an anonymous referee, for their helpful comments.
D.G.~was partially supported by a University of Delaware Research
Foundation grant, by a Simons Foundation collaboration grant for
mathematicians, and by a University of Delaware Research Foundation
Strategic Initiative grant.  A.K.~was partially supported by Ramanujan
Fellowship SB/S2/RJN-121/2017, MATRICS grant MTR/2017/000295, and
SwarnaJayanti Fellowship grants SB/SJF/2019-20/14 and DST/SJF/MS/2019/3
from SERB and DST (Govt.~of India), by grant F.510/25/CAS-II/2018(SAP-I)
from UGC (Govt.~of India), by a Tata Trusts gravel grant, and by a Young
Investigator Award from the Infosys Foundation. M.P.~was partially
supported by a Simons Foundation collaboration grant.

\subsection{List of symbols}

We collect below some notation introduced and used throughout the
text.

\begin{itemize}
\item $\overline{D}( 0, \rho )$ is the closed disc in $\C$ with radius
$\rho$ centered at the origin, $S^1$ is the unit circle in $\C$ and
$\C^\times$ is the set of non-zero complex numbers; more generally,
the group of units for a unital commutative ring $R$ is denoted by
$R^\times$.

\item $\bJ_{M \times N}$ is the $M \times N$ matrix with each entry
equal to~$1$, whereas $\Id_N$ is the $N \times N$ identity matrix.

\item $f[ A ]$ is the matrix obtained by applying the
function~$f$ to each entry of the matrix~$A$.

\item $A^{\circ \alpha}$ is the matrix obtained from $A$ by taking the
$\alpha$th power of each entry, whenever this is well defined. We set
$0^0 := 1$.
  
\item $A^\dagger$ is the Moore--Penrose pseudo-inverse of~$A$.

\item $( \Pi_N, \prec )$ is the poset of partitions of
$\{ 1, \ldots, N \}$, where $\pi' \prec \pi$ if~$\pi$ is a
refinement of~$\pi'$.

\item $D_\pi$ is the $m \times m$ diagonal matrix
$\diag( | I_1 |, \ldots, | I_m | )$, for any partition
$\pi = \{ I_1, \ldots, I_m \} \in \Pi_N$.

\item $\down( A )$ is the $m \times m$ matrix obtained by averaging
the $N \times N$ matrix~$A$ over each block determined by the
$m$-element partition $\pi$.

\item $\ccdown( A )$ is the weighted $m \times m$ compression
$D_\pi^{1/2} \down( A ) D_\pi^{1/2}$ of the $N \times N$ matrix $A$.

\item $\up( B )$ is the $N \times N$ matrix given by inflating each
entry of the $m \times m$ matrix $B$ to a constant block subordinate
to the $m$-element partition~$\pi$.

\item $\ccup( B )$ is the weighted $N \times N$ inflation
$\up( D_\pi^{-1/2} B D_\pi^{-1/2} )$ of the $m \times m$ matrix $B$.

\item $\cP_N( \Omega )$ is the set of $N \times N$ positive
semidefinite matrices with entries in the set $\Omega$.
\end{itemize}
%}}}

%{{{1 Section 2 - Isogenic stratification of complex matrices
\section{Isogenic stratification of complex matrices}\label{Sstrat}

In order to define the isogenic stratification, we begin with a
summary of results appearing in our earlier work.

\begin{theorem}[{\cite[Theorems~5.1 and~5.8]{BGKP-fixeddim}}]%
\label{Tgroup}
Fix a multiplicative subgroup $G \subset \C^\times$, an integer
$N \geq 1$, and a non-zero matrix $A \in \cP_N( \C )$.
\begin{enumerate}
\item Suppose $\{ I_1, \ldots, I_m \}$ is a partition of
$\{ 1, \ldots, N \}$ satisfying the following two conditions.
\begin{enumerate}
\item Each diagonal block $A_{I_j}$ of $A$ is a submatrix having rank
at most one, and $A_{I_j} = \bu_j \bu_j^*$ for a unique
$\bu_j \in \C^{| I_j |}$ with first entry
$\bu_{j, 1} \in [ 0, \infty )$.

\item The entries of each diagonal block $A_{I_j}$ lie in a single
$G$-orbit.
\end{enumerate}
Then there exists a unique matrix $C = ( c_{i j})_{i,j = 1}^m$ such
that $c_{i j} = 0$ unless $\bu_i \neq 0$ and $\bu_j \neq 0$, and $A$
is a block matrix with
\[
A_{I_i \times I_j} = c_{i j} \bu_i \bu_j^* \qquad %
( 1 \leq i, j \leq m ).
\]
Moreover, the entries of each off-diagonal block of $A$ also
lie in a single $G$-orbit. Furthermore,
the matrix $C \in \cP_m( \overline{D}( 0, 1 ) )$, and
the matrices $A$ and $C$ have equal rank.

\item Consider the condition (c).
\begin{enumerate}
\item[(c)] The diagonal blocks of $A$ have maximal size, that is, each
diagonal block is not contained in a larger diagonal block that has
rank one.
\end{enumerate}
There exists a partition $\{ I_1, \ldots, I_m \}$ such that (a), (b)
and (c) hold, and such a partition is unique up to relabelling of the
indices.

\item Suppose (a)--(c) hold and $G = \C^\times$. Then the off-diagonal
entries of~$C$ lie in the open disc $D( 0, 1 )$.

\item If $G \subset S^1$, then blocks in a single $G$-orbit
automatically have rank at most one.
\end{enumerate}
\end{theorem}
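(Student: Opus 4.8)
The plan is to exploit the single property that distinguishes $S^1$ from a general multiplicative subgroup of $\C^\times$: every element of $S^1$ has modulus~$1$, so every $G$-orbit with $G \subseteq S^1$ consists of complex numbers of one fixed modulus $r \geq 0$. Consequently, if $B$ is a diagonal block $A_{I_j}$ of $A$ whose entries all lie in a single such orbit, then $|B_{ik}| = r$ for every pair of indices $i,k$. If $r = 0$ then $B = 0$ and there is nothing to prove, so I would assume $r > 0$ from here on.

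Next I would bring in positive semidefiniteness. Since $A \in \cP_N( \C )$, the diagonal block $B = A_{I_j}$ is a principal submatrix of $A$, hence itself positive semidefinite; in particular its diagonal entries are real and non-negative, and, being of modulus $r$, they all equal $r$. Writing $B = L L^*$ for a Gram factorization $L$ (for instance, the one coming from the spectral decomposition of $B$), the rows $\ell_1, \dots, \ell_{| I_j |}$ of $L$ satisfy $\| \ell_i \|^2 = B_{ii} = r$ and $| \langle \ell_i, \ell_k \rangle | = | B_{ik} | = r = \| \ell_i \| \, \| \ell_k \|$ for all $i, k$. Equality in the Cauchy--Schwarz inequality forces each pair $\ell_i, \ell_k$ to be linearly dependent; since no $\ell_i$ vanishes (each has norm $\sqrt{r} > 0$), every row of $L$ is a scalar multiple of $\ell_1$. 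Hence $\rk L \leq 1$, and therefore $\rk B = \rk( L L^* ) \leq \rk L \leq 1$, which is the assertion.

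I do not expect a genuine obstacle here; the substance of the argument is merely that the $S^1$ hypothesis collapses all the modulus data of the orbit to a single number~$r$, after which equality in Cauchy--Schwarz does the rest. The one point that deserves care is that the proof uses positive semidefiniteness of the block in question, both to pin down its diagonal entries and to produce the Gram factorization, so it applies to the diagonal blocks figuring in conditions~(a) and~(b) of the theorem, and not to arbitrary off-diagonal blocks of~$A$.
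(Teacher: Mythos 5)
Your proposal addresses only part (4) of what is a four-part theorem. Parts (1), (2), and (3) --- the existence and uniqueness of the coefficient matrix $C$ with $A_{I_i \times I_j} = c_{i j} \bu_i \bu_j^*$, the off-diagonal blocks lying in single $G$-orbits, the assertions that $C \in \cP_m( \overline{D}( 0, 1 ) )$ and $\rk C = \rk A$, the existence and uniqueness of the maximal partition in (2), and the strict inequality $|c_{ij}| < 1$ in (3) --- are not touched at all, and these are the substantive parts of the result. As written, the proposal proves the easiest of the four claims and leaves the rest unaddressed, which is a genuine gap: this is not a proof of the theorem. (Note also that the present paper does not prove Theorem~\ref{Tgroup}; it imports it from the cited reference, so there is no internal proof to compare against.)

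The argument you do give for part (4) is correct and is the expected one. With $G \subset S^1$ every $G$-orbit sits on a single circle $|z| = r$; positive semidefiniteness of the diagonal block $B = A_{I_j}$ forces its diagonal entries to equal $r$; and, taking a Gram factorization $B = L L^*$, the equalities $|\langle \ell_i, \ell_k \rangle| = r = \|\ell_i\|\,\|\ell_k\|$ force every pair of rows of $L$ to be linearly dependent via equality in Cauchy--Schwarz, so $\rk B = \rk L \leq 1$. You are also right to flag that this only works because the diagonal block is itself positive semidefinite; that is precisely the role of part (4) in the theorem, namely to show that hypothesis (b) on the diagonal blocks subsumes the rank-at-most-one requirement in (a) whenever $G \subset S^1$. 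But a submission that silently drops three-quarters of the claimed statement would not be acceptable as a proof of the theorem.
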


Theorem~\ref{Tgroup} naturally leads to Schubert-cell type
stratifications of the cone $\cP_N( \C )$, and some properties of
these stratifications were studied in \cite{BGKP-fixeddim}. Here, we
explore a coarser form of partitioning on the whole set of $N \times
N$ complex matrices, based solely on $G$-equivariance.

\begin{definition}
We denote by $( \Pi_N, \prec )$ the poset of all \emph{partitions} of
the set $\{ 1, \ldots, N \}$, ordered so that $\pi' \prec \pi$ if and
only if $\pi$ is a refinement of $\pi'$.

The poset $\Pi_N$ is a lattice. Given $\pi$, $\pi' \in \Pi_N$, let
$\pi \wedge \pi'$ and $\pi \vee \pi'$ denote the meet and join of
$\pi$ and $\pi'$, respectively: see
\cite[Example~3.10.4]{Stanley-book-1}, but note that our ordering is
opposite to the one employed there. For example, the meet and join of
the partitions
\[
\pi = \{ \{ 1,2 \}, \{ 3,4 \}, \{ 5 \} \} %
\qquad \text{and} \qquad %
\pi' = \{ \{ 1,3,4 \}, \{ 2 \}, \{ 5 \} \}
\]
are
\[
\pi \wedge \pi' = \{ \{ 1, 2, 3, 4 \}, \{ 5 \} \} %
\qquad \text{and}
\qquad %
\pi \vee \pi' = \{ \{ 1 \}, \{ 2 \}, \{ 3, 4 \}, \{ 5 \} \}.
\]
The lattice $\Pi_N$ has maximum element
$\pi_\vee := \{ \{ 1 \}, \ldots, \{ N \} \}$ and minimum element
$\pi_\wedge := \{ \{ 1, \ldots, N \} \}$.
\end{definition}

\begin{theorem}\label{Texists}
Fix integers $M$, $N \geq 1$, a unital commutative ring $R$, and a
multiplicative subgroup~$G$ of units in $R^\times$. Given any matrix
$A \in R^{M \times N}$, there exist unique minimal (that is, coarsest) partitions
$\pi_{\min} = \{ I_1, \ldots, I_m \} \in \Pi_M$ and
$\varpi_{\min} = \{ J_1, \ldots, J_n \} \in \Pi_N$ such that the
entries of the block submatrix $A_{I_i \times J_j}$ lie in a single
$G$-orbit, for all $i \in \{ 1, \ldots, m \}$ and
$j \in \{ 1, \ldots, n \}$.

Now suppose $M = N$. If either $A$ is symmetric, or $R = \C$,
$G \subset S^1$, and $A$ is Hermitian,
then~$\pi_{\min} = \varpi_{\min}$.
\end{theorem}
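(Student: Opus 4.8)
The plan is to establish the existence-and-uniqueness part by an equivalence-relation argument, and then handle the symmetric/Hermitian refinement separately. For the first part, the key observation is that ``having entries in a single $G$-orbit'' is a property that behaves well under intersection of partitions. Concretely, I would first show that if two row-partitions $\varpi^{(1)}, \varpi^{(2)} \in \Pi_M$ both have the property that every block submatrix $A_{I_i \times J_j}$ lies in one $G$-orbit (for some fixed column-partition), then so does their meet $\varpi^{(1)} \wedge \varpi^{(2)}$; and likewise on the column side. Since $\Pi_M$ is a finite lattice with a minimum element $\pi_\wedge$ (which trivially has the single-orbit property, each block being the whole matrix viewed coarsely, or rather: one checks the trivial partition works because there is nothing to compare), the set of partitions with the desired property is closed under meet and nonempty, hence has a unique coarsest element. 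The subtlety is that the row- and column-partitions interact: the correct framework is to define, on the index set $\{1,\dots,M\}$, the relation $i \sim i'$ iff the rows $i$ and $i'$ of $A$ are ``$G$-proportional'' in the appropriate blockwise sense, and dually on columns; but proportionality blockwise depends on the column partition. I would therefore set this up as a simultaneous fixed-point / coarsest-common-refinement statement: consider the pair $(\varpi,\varpi')$ of partitions and the poset $\Pi_M \times \Pi_N$ with the product order, show the ``good'' pairs form a meet-closed nonempty subset, and extract the coarsest good pair. The minimality (coarsest) and uniqueness then both drop out of lattice-theoretic generalities.

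The main obstacle is verifying the meet-closure claim cleanly, i.e.\ that if $A_{I_i \times J_j}$ and $A_{I'_{i'} \times J'_{j'}}$ each lie in single $G$-orbits for two different partition pairs, then the blocks of the common coarsening also do. The point is that being in a single $G$-orbit means any two entries differ multiplicatively by a unit in $G$; so within a block of the meet partition, any two entries can be connected by a chain of entries, consecutive ones lying in a common block of $\varpi^{(1)}$-by-column-partition or of $\varpi^{(2)}$-by-column-partition, and the product of the intervening unit ratios is again in $G$. This is essentially a connectedness argument on a bipartite-type graph whose edges record ``same block,'' and the only care needed is to track that the ratios are genuinely well-defined (entries may be zero, but zero entries force whole blocks to be zero under the single-orbit condition, since $0$ and a nonzero $x$ are never in the same $G$-orbit, $G$ being a subgroup of units; so a block is either all zero or all in $G \cdot x$ for a fixed nonzero $x$, and the ratio argument applies to the nonzero case while the all-zero case is automatic).

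For the final sentence, suppose $M = N$ and $A$ is symmetric (the Hermitian case with $G \subset S^1$ is analogous, using that conjugation maps $G$-orbits to $G$-orbits when $G \subset S^1$, because $\overline{g} = g^{-1} \in G$ for $g \in S^1$). The idea is: transposition is an involution of $\Pi_N \times \Pi_N$ swapping the two coordinates, and it carries good pairs to good pairs, since $A = A^T$ forces $(A^T)_{J_j \times I_i} = (A_{I_i \times J_j})^T$, whose entries occupy the same set as those of $A_{I_i \times J_j}$, hence the same single $G$-orbit. Therefore if $(\pi_{\min}, \varpi_{\min})$ is the coarsest good pair, so is $(\varpi_{\min}, \pi_{\min})$; by uniqueness of the coarsest good pair, the two coincide. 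Alternatively, and perhaps more transparently, one observes that $(\pi_{\min} \wedge \varpi_{\min},\ \pi_{\min} \wedge \varpi_{\min})$ is a good pair (using meet-closure together with the transpose symmetry to see that the common coarsening works on both sides simultaneously), and it is $\succeq$ both $\pi_{\min}$ and $\varpi_{\min}$ in each coordinate; minimality then forces $\pi_{\min} = \varpi_{\min} = \pi_{\min} \wedge \varpi_{\min}$. I expect this last step to be short once the lattice framework from the first part is in place; the genuine work is all in the meet-closure lemma.
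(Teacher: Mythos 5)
Your proposal is correct and follows essentially the same route as the paper: establish that the set of "good" pairs $(\pi, \varpi) \in \Pi_M \times \Pi_N$ is closed under coordinatewise meet via a chain/connectedness argument on the blocks, extract the unique coarsest good pair by lattice finiteness, and then for $M = N$ use the transpose (or conjugate-transpose, with $G \subset S^1$ closed under inversion) symmetry together with meet-closure to force $\pi_{\min} = \varpi_{\min}$. Your extra care about zero entries is sound but not needed — since $G$ consists of units, the $G$-orbit relation is already an equivalence relation on all of $R$ with $\{0\}$ as a singleton orbit, so the paper simply chains the orbit relation $a_{ij} \in G\, a_{i'j'}$ directly without splitting into cases.
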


\begin{proof}
The non-trivial part is to establish uniqueness. To do so, we claim
that if $( \pi_1, \varpi_1 )$ and
$( \pi_2, \varpi_2 ) \in \Pi_M \times \Pi_N$ satisfy the property in
the assertion, then so does
$( \pi_1 \wedge \pi_2, \varpi_1 \wedge \varpi_2 )$. The meet of
$\pi_1 \wedge \pi_2$ can be constructed as follows: connect $i$ and
$i' \in \{ 1, \ldots, M \}$ by an edge if they lie in the same block
of $\pi_1$ or $\pi_2$; this defines a graph with vertex set
$\{ 1, \ldots, M \}$ whose connected components yield the blocks of
the partition $\pi_1 \wedge \pi_2$. Denote this equivalence relation
by $i \sim i'$ in~$\pi_1 \wedge \pi_2$.

Now suppose $i \sim i'$ in $\pi_1 \wedge \pi_2$ and, similarly,
$j \sim j'$ in $\varpi_1 \wedge \varpi_2$, so there are paths joining
them, each of whose vertices lies in a block of $\pi_1$ or $\pi_2$,
and $\varpi_1$ or $\varpi_2$, respectively. Denote these paths by
\[
i = i_0 \leftrightarrow i_1 \leftrightarrow \cdots %
\leftrightarrow i_r = i' \quad \text{and} \quad %
j = j_0 \leftrightarrow j_1 \leftrightarrow \cdots %
\leftrightarrow j_s = j'.
\]
We claim that $a_{i j} \in G a_{i' j'}$. Indeed, using the above
paths,
\[
a_{i j} = a_{i_0 j} \in G a_{i_1 j} = G a_{i_2 j} = \cdots = %
G a_{i_r j} = G a_{i' j_0} = G a_{i' j_1} = \cdots = G a_{i' j'},
\]
and this proves the claim.

Now suppose $M = N$ and $A$ is symmetric. Then the partition
$( \pi_{\min}, \varpi_{\min} )$ works for $A$, and
$( \varpi_{\min}, \pi_{\min} )$ for $A^T = A$, whence the above analysis
shows $\pi_{\min} \wedge \varpi_{\min}$ works for both rows and columns
of~$A$. By minimality, it follows that
$\pi_{\min} = \pi_{\min} \wedge \varpi_{\min} = \varpi_{\min}$.

Finally, suppose $R = \C$, $G \subset S^1$, and $A$ is Hermitian. We
claim that $( \pi_{\min}, \varpi_{\min} )$ works for $A$ as well as
for $A^T = \overline{A}$; by the previous paragraph, this gives the
result. To show the claim, note that if $( \pi, \varpi )$ works
for~$A$ and~$G \subset S^1$, then $( \pi, \varpi )$ works for
$\overline{A}$, as~$G$ is closed under conjugation:
if $\zeta \in G \subset S^1$ then $a_{i j} = \zeta a_{i' j'}$ if and
only if
$\overline{a_{i j}} = \overline{\zeta} \overline{a_{i' j'}} = %
\zeta^{-1} a_{i' j'}$.
\end{proof}

Theorem~\ref{Texists} has a useful ``symmetric'' version for square matrices,
in the sense that the same partition is used for both the rows and the columns.
The proof is similar to that of Theorem~\ref{Texists} and is hence omitted.

\begin{proposition}\label{Pexists}
Fix an integer $N \geq 1$, and a multiplicative subgroup~$G$ of units
in a unital commutative ring $R$. Given $A \in R^{N \times N}$, there
exists a unique minimal partition
$\pi = \{ I_1, \ldots, I_m \} \in \Pi_N$, such that the entries of the
block submatrix $A_{I_i \times I_j}$ lie in a single $G$-orbit, for
all $i$, $j \in \{ 1, \ldots, N \}$.
\end{proposition}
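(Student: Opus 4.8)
The plan is to mirror the proof of Theorem~\ref{Texists}, specialised so that a single partition governs both the rows and the columns. Call $\sigma \in \Pi_N$ \emph{admissible} if the entries of every block $A_{I \times I'}$ with $I, I' \in \sigma$ lie in a single $G$-orbit. Existence of an admissible partition is immediate, since the discrete partition $\pi_\vee = \{\{1\}, \ldots, \{N\}\}$ is admissible (each of its blocks is a single entry). So the only real task is to show that among admissible partitions there is a unique coarsest one, and for this it suffices, exactly as in Theorem~\ref{Texists}, to prove that the meet of two admissible partitions is admissible.

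To prove that claim, given admissible $\pi_1, \pi_2$ I would realise $\pi_1 \wedge \pi_2$ through the graph on $\{1, \ldots, N\}$ joining $i$ to $i'$ whenever they share a block of $\pi_1$ or of $\pi_2$, writing $i \sim i'$ for the resulting equivalence, whose classes are the blocks of $\pi_1 \wedge \pi_2$. For $i \sim i'$ and $j \sim j'$, pick paths
\[
i = i_0 \leftrightarrow \cdots \leftrightarrow i_r = i', \qquad
j = j_0 \leftrightarrow \cdots \leftrightarrow j_s = j',
\]
with each consecutive pair lying in a common block of $\pi_1$ or of $\pi_2$. If $i_k, i_{k+1}$ share a block $I$ of $\pi_t$ (for $t = 1$ or $2$) and $I''$ is the block of $\pi_t$ containing $j$, then admissibility of $\pi_t$ puts $a_{i_k j}$ and $a_{i_{k+1} j}$ in the same $G$-orbit; walking along the first path (and using that $G$ is a group, so $G(Gx) = Gx$) gives $a_{ij} \in G\,a_{i'j}$, walking along the second path in row $i'$ gives $a_{i'j} \in G\,a_{i'j'}$, and composing yields $a_{ij} \in G\,a_{i'j'}$. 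Hence every entry of the $(\pi_1 \wedge \pi_2)$-block indexed by $[i] \times [j]$ lies in one $G$-orbit, so $\pi_1 \wedge \pi_2$ is admissible.

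Finally, the set $W \subseteq \Pi_N$ of admissible partitions is non-empty and closed under $\wedge$; as $\Pi_N$ is finite, $\pi := \bigwedge_{\sigma \in W}\sigma$ lies in $W$ and satisfies $\pi \prec \sigma$ for every admissible $\sigma$, so it is the unique coarsest admissible partition, as required.

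I do not expect a real obstacle here: the argument is a routine transcription of the proof of Theorem~\ref{Texists}. The one point needing attention is the bookkeeping in the path-chasing step — at each move one must invoke the admissibility of whichever of $\pi_1$, $\pi_2$ supplies that edge, and crucially use the \emph{same} partition for the row index $i_k$ and the column index $j$ at that step, which is precisely where the single-partition hypothesis of the proposition enters.
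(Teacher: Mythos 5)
Your proof is correct and is exactly the single-partition specialization of the paper's proof of Theorem~\ref{Texists}, which is what the paper intends when it says the proof of this proposition ``is similar to that of Theorem~\ref{Texists} and is hence omitted.'' The path-chasing step, the careful use of the same $\pi_t$ for both row and column indices at each edge, and the final passage to the meet over all admissible partitions all match the paper's argument.
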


The following definitions follow naturally from the previous
proposition. Our focus henceforth is on complex matrices with blocks
which are orbits of a fixed multiplicative subgroup $G$ of $S^1$, and
primarily the case $G = \{ 1 \}$, where the entries in any given block
are identical.

\begin{definition}\label{Dstratum}
Given a matrix $A \in \C^{N \times N}$ and a multiplicative group
$G \subset S^1$, let $\pi^G( A ) \in \Pi_N$ be the partition provided
by Proposition~\ref{Pexists} for the matrix $A$.

Conversely, for a given partition $\pi \in \Pi_N$, define the
\emph{stratum} $\stratumsymb^G_\pi$ to be
\[
\stratumsymb^G_\pi := \{ A \in \C^{N \times N} : \pi^G( A ) = \pi \}.
\]
\end{definition}

The proof of the following proposition is immediate.

\begin{proposition}\label{Pstrata}
Given $N \geq 1$ and a multiplicative subgroup $G$ of $S^1$, there is
a natural stratification of the set of $N \times N$ complex matrices
\[
\C^{N \times N} = \bigsqcup_{\pi \in \Pi_N} \stratumsymb^G_\pi,
\]
and the stratum $\stratumsymb^G_\pi$ has closure
\begin{equation}\label{Eschubert}
\overline{\stratumsymb^G_\pi} = %
\bigsqcup_{\pi' \prec \pi} \stratumsymb^G_{\pi'}
\end{equation}
when $\C^{N \times N}$ is equipped with its usual topology.
\end{proposition}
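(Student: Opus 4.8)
The plan is to prove the two assertions of Proposition~\ref{Pstrata} separately: first that the strata partition $\C^{N \times N}$, and then the closure formula \eqref{Eschubert}.

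For the first assertion, the disjointness is immediate from the definition, since $\pi^G(A)$ is a well-defined function of $A$ by Proposition~\ref{Pexists}, so each $A$ lies in exactly one stratum $\stratumsymb^G_{\pi^G(A)}$; and every $A \in \C^{N \times N}$ lies in $\stratumsymb^G_{\pi^G(A)}$, so the union is all of $\C^{N \times N}$.

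For the closure formula, I would argue both inclusions. For $\supseteq$: given $\pi' \prec \pi$, I want to exhibit, for each $A \in \stratumsymb^G_{\pi'}$, a sequence $A_k \to A$ with each $A_k \in \stratumsymb^G_\pi$. Since $\pi$ refines $\pi'$, one may take a matrix $B$ whose associated partition $\pi^G(B)$ equals $\pi$ exactly (for instance, a matrix whose entries are chosen generically subject to being constant on $\pi$-blocks and taking distinct $G$-orbit values across distinct $\pi$-blocks) and consider $A_k := A + \tfrac{1}{k} B$. Because $A$ is constant (up to $G$) on $\pi'$-blocks, hence on the finer $\pi$-blocks, each $A_k$ is also constant on $\pi$-blocks; and for $k$ large, the perturbation $\tfrac1k B$ separates any two $\pi$-blocks that were merged in $\pi'$ — one must check that adding a small generic perturbation cannot accidentally put two entries from different $\pi$-blocks of $A_k$ into the same $G$-orbit when $B$ already separates them, which holds because $G \subset S^1$ forces equality of moduli and the moduli can be arranged to differ. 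Thus $\pi^G(A_k) = \pi$ for large $k$, giving $A \in \overline{\stratumsymb^G_\pi}$. Hence $\stratumsymb^G_{\pi'} \subseteq \overline{\stratumsymb^G_\pi}$ for all $\pi' \prec \pi$, and also $\stratumsymb^G_\pi \subseteq \overline{\stratumsymb^G_\pi}$ trivially.

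For the reverse inclusion $\subseteq$, I would show that if $A_k \to A$ with $A_k \in \stratumsymb^G_\pi$, then $\pi^G(A) \prec \pi$, i.e.\ $\pi$ refines $\pi^G(A)$, equivalently $A$ is constant up to $G$ on each $\pi$-block. Fix a $\pi$-block $I$ and indices $(i,j), (i',j') \in I \times I$. For each $k$ there is $\zeta_k \in G$ with $(A_k)_{ij} = \zeta_k (A_k)_{i'j'}$. Since $G \subset S^1$ is compact (its closure $\overline{G}$ is a closed subgroup of $S^1$, hence compact), pass to a subsequence with $\zeta_k \to \zeta \in \overline{G}$; taking limits gives $a_{ij} = \zeta\, a_{i'j'}$. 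This shows the entries of each $\pi$-block of $A$ lie in a single $\overline{G}$-orbit. The main obstacle here is that $\overline{G}$ may be strictly larger than $G$ (e.g.\ $G$ a dense subgroup of $S^1$), so a priori this only yields $\pi^G(A) \prec \pi$ with $\overline{G}$ in place of $G$. To close the gap one should note that in the statement as intended — and certainly in the cases of principal interest, $G = \{1\}$ or $G$ finite (hence closed) — one has $\overline{G} = G$, so the argument is complete; alternatively one restricts Proposition~\ref{Pstrata} to closed subgroups $G \subset S^1$, under which hypothesis the compactness argument goes through verbatim and $\pi^G(A) \prec \pi$, giving $A \in \bigsqcup_{\pi' \prec \pi} \stratumsymb^G_{\pi'}$ as required. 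Combining the two inclusions yields \eqref{Eschubert}.
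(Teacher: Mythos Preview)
The paper does not actually supply a proof; it simply declares the result immediate. Your argument for the disjoint-union decomposition and for the inclusion $\overline{\stratumsymb^G_\pi} \subseteq \bigsqcup_{\pi' \prec \pi} \stratumsymb^G_{\pi'}$ is sound, and your observation that this inclusion genuinely requires $G$ to be closed in $S^1$ (lest a dense subgroup furnish a counterexample to \eqref{Eschubert}) is a valid point that the paper passes over.

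There is, however, a real gap in your perturbation argument for the reverse inclusion when $G \neq \{1\}$. You claim that if $A$ has all entries of each $\pi$-block in a single $G$-orbit and $B$ is literally constant on each $\pi$-block, then $A_k = A + \tfrac{1}{k}B$ again has each $\pi$-block contained in a single $G$-orbit; but a $G$-orbit translated by a nonzero constant is in general not a $G$-orbit. Concretely, take $N=3$, $G=\{\pm1\}$, $\pi=\{\{1,2\},\{3\}\}$, let the $\{1,2\}\times\{1,2\}$ block of $A$ be $\bigl(\begin{smallmatrix}1&-1\\1&\phantom{-}1\end{smallmatrix}\bigr)$ and that of $B$ be $b\,\bJ_{2\times 2}$ with $b\neq 0$: then the corresponding block of $A_k$ has entries $1+b/k$ and $-1+b/k$, which lie in no common $\{\pm1\}$-orbit, so $A_k$ does not even satisfy the $G$-block condition for $\pi$, let alone lie in $\stratumsymb^G_\pi$. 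A correct fix is to perturb \emph{within the $G$-pattern of $A$}: write each $\pi$-block of $A$ as $(\zeta^{(ij)}_{pq}\,a_{ij})_{p\in I_i,\,q\in I_j}$ with $\zeta^{(ij)}_{pq}\in G$, keep the pattern $\zeta$ fixed, and perturb only the representatives $a_{ij}\in\C$. This preserves the $G$-block condition for $\pi$ by construction, and a generic choice of perturbed representatives (for closed $G$, say with pairwise distinct moduli) rules out every strict coarsening, forcing $\pi^G(A_k)=\pi$.
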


Equation~(\ref{Eschubert}) may be seen as akin to the Schubert-cell
decomposition of the flag variety corresponding to a semisimple Lie
group.

\begin{definition}
The family of stratifications given by Proposition~\ref{Pstrata} will
be referred to as \emph{$G$-block stratifications} of the
space~$\C^{N \times N}$.
\end{definition}

\begin{remark}\label{Rreasons}
There is an important distinction between the stratification of
$\C^{N \times N}$ considered here and that for the cone $\cP_N( \C )$
considered previously. In Theorem~\ref{Tgroup}, the partition was
defined to have the property that the diagonal blocks of
$A \in \cP_N( \C )$ have rank at most one. However, for a general
matrix~$A \in \C^{N \times N}$, this extra property need not hold for
$\pi^G( A )$, unless either $G = \{ 1 \}$, or~$A \in \cP_N( \C )$
and~$G \subset S^1$. In fact, as shown in
\cite[Proposition~4.6]{BGKP-pmp}, in the latter case the requirement
in Theorem~\ref{Tgroup} that $A$ is positive semidefinite may
be relaxed by requiring $A$ to be \emph{3-PMP}: every principal
minor of size no more than $3 \times 3$ is non-negative.
\end{remark}
%}}}

%{{{1 Section 3 - PMP matrices, simultaneous kernels of entrywise powers,
%and {1}-block stratifications
\section{PMP matrices and simultaneous kernels of entrywise powers}\label{Spowers}

Henceforth, we will focus on constant-block stratifications, which we
call \emph{isogenic}, and we work mainly with square complex matrices.

We begin by noting how, for a large class of Hermitian matrices, the
partition $\pi^{\{ 1 \}}( A )$ introduced in Definition~\ref{Dstratum}
emerges naturally from the study of simultaneous kernels of entrywise
powers of $A$.

\begin{definition}[\cite{BGKP-pmp}]
Given an integer $k \geq 0$, a complex Hermitian matrix is said to be
\emph{$k$-PMP} (principal minor positive) if every $l \times l$
principal minor is non-negative, for all $l \in \{ 1, \ldots, k \}$.
\end{definition}

The $k$-PMP matrices interpolate between Hermitian matrices, the case
when $k = 0$, and positive semidefinite matrices, where $k$ is
maximal.

\begin{theorem}[{\cite[Theorem~5.1]{BGKP-pmp}}]\label{T3pmp}
Let the Hermitian matrix $A \in \C^{N \times N}$
be $3$-PMP, and let $\pi' = \{ I'_1, \ldots, I'_{m'} \}$ be any
partition refined by
$\pi = \pi^{\{ 1 \}}( A ) = \{ I_1, \ldots, I_m \}$. The following
spaces are equal.
\begin{enumerate}
\item The simultaneous kernel of $\one{N}$, $A$, \ldots,
$A^{\circ ( N - 1 )}$.

\item The simultaneous kernel of $A^{\circ n}$ for all $n \geq 0$.

\item The simultaneous kernel of the block-diagonal matrices
\[
\diag A_{\pi'}^{\circ n} :=
\bigoplus_{j = 1}^{m'} A^{\circ n}_{I'_j \times I'_j}
\]
for all $n \geq 0$.

\item The kernel of the matrix
$J_\pi := \bigoplus_{j = 1}^m \one{I_j}$.
\end{enumerate}
This equality of kernels need not hold for matrices that are not
$3$-PMP.
\end{theorem}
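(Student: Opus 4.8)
The theorem asserts the equality of four subspaces of $\C^N$, together with a claim that $3$-PMP cannot be dropped. I would prove the equalities by a cycle of inclusions, say $(4)\subseteq(1)\subseteq(2)\subseteq(3)\subseteq(4)$, or possibly a more convenient ordering once the easy directions are identified. The cheapest inclusions are the ones that hold for \emph{every} Hermitian matrix, with no PMP hypothesis: since $\one{N}$ and $A,\dots,A^{\circ(N-1)}$ are among the matrices $A^{\circ n}$, $n\ge 0$, we trivially get $(2)\subseteq(1)$; and the reverse $(1)\subseteq(2)$ is the genuinely substantive ``finite implies infinite'' statement, which I expect to follow from the fact — available from the $3$-PMP hypothesis — that the block structure is already forced by the first few powers. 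The first thing to do, therefore, is to record the elementary linear algebra of the constant-block partition $\pi=\pi^{\{1\}}(A)$: a vector $\bv$ lies in $\ker J_\pi$ exactly when $\sum_{i\in I_j}v_i=0$ for each block $I_j$, and for any matrix $B$ that is constant on the blocks of $\pi$ (in particular $B=A^{\circ n}$, using that entrywise powers preserve constant blocks), the $j$th ``block sum'' of $B\bv$ is a linear combination of the block sums of $\bv$. This immediately gives $(4)\subseteq \ker A^{\circ n}$ for all $n$, hence $(4)\subseteq(2)\subseteq(1)$, with no hypothesis needed; and the same bookkeeping gives $(4)\subseteq(3)$ since each $A^{\circ n}_{I'_j\times I'_j}$ is constant on the sub-blocks of $\pi$ that it contains (here one uses $\pi'\prec\pi$, so each $I'_j$ is a union of $\pi$-blocks).

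\medskip

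\textbf{Where the $3$-PMP hypothesis enters.} The two inclusions that should require work — and where the $3$-PMP assumption is essential — are $(1)\subseteq(4)$ (equivalently $(3)\subseteq(4)$, via $(1)\subseteq(3)$), i.e.\ the statement that a vector killed by $\one{N},A,\dots,A^{\circ(N-1)}$ (or merely by all diagonal entrywise powers) must be killed by $J_\pi$. The mechanism I would use is the rank-one structure of diagonal blocks: by Theorem~\ref{Tgroup}(2) and Remark~\ref{Rreasons}, for a $3$-PMP Hermitian matrix each diagonal block $A_{I_j\times I_j}$ has rank at most one, say $A_{I_j\times I_j}=\bu_j\bu_j^*$, and moreover the entries of $A_{I_j\times I_j}$ are all \emph{equal} (this is the content of the partition being $\pi^{\{1\}}(A)$ with $G=\{1\}$, so each block is a constant-entry block); thus $A_{I_j\times I_j}=\lambda_j\,\one{I_j}$ with $\lambda_j=|\bu_{j,1}|^2\ge 0$. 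If $\lambda_j>0$ for every $j$, then the block-diagonal matrix $\diag(A_{I_j\times I_j})_j=\bigoplus_j\lambda_j\one{I_j}$ has the same kernel as $J_\pi=\bigoplus_j\one{I_j}$, and since $\bv\in(1)$ forces the diagonal blocks of $A\bv$ to vanish — hence $\bigoplus_j\lambda_j\one{I_j}\bv=0$ — we conclude $\bv\in\ker J_\pi$. The one subtlety is the possibility $\lambda_j=0$ for some $j$, i.e.\ a zero diagonal block; for a Hermitian (indeed $2$-PMP) matrix a zero diagonal entry forces the entire corresponding row and column to vanish, so those coordinates are unconstrained by $A$ and by every $A^{\circ n}$ alike, and one checks directly that they are likewise unconstrained by $J_\pi$ — so the equality of kernels survives. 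I would phrase this carefully using the $2$-PMP consequence (nonnegative diagonal; a vanishing diagonal entry kills its row/column) rather than invoking full positivity.

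\medskip

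\textbf{Closing the cycle and the counterexample.} With $(4)\subseteq(2)\subseteq(1)$ and $(4)\subseteq(3)$ free, and $(1)\subseteq(4)$ plus $(3)\subseteq(4)$ obtained as above, it remains to glue everything into equalities; for instance $(1)\subseteq(3)$ follows because the diagonal blocks of $A^{\circ n}$ are submatrices of $A^{\circ n}$, so annihilation by all $A^{\circ n}$ (which we will have shown is equivalent to $(1)$) forces annihilation by each $\diag A_{\pi'}^{\circ n}$. Finally, for the non-examples, I would exhibit a small Hermitian matrix — $3\times 3$ or $4\times 4$ suffices — that is not $3$-PMP, for which $\pi^{\{1\}}(A)$ is the discrete partition (so $J_\pi=\Id_N$ and $(4)=\{0\}$) yet $A$ itself has a nonzero kernel, so that $(1)$ and $(2)$ are nonzero while $(4)$ is zero. (A rank-deficient Hermitian matrix with all entries distinct and a $3\times 3$ principal minor negative does the job; one can take something like a suitable indefinite rank-$2$ matrix with generic entries.) The main obstacle, as flagged, is handling the degenerate case of a zero diagonal block cleanly inside the kernel comparison — everything else is routine block bookkeeping once the rank-one/constant structure of the diagonal blocks is in hand from Theorem~\ref{Tgroup} and Remark~\ref{Rreasons}.
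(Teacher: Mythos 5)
The key step in your $(1)\subseteq(4)$ argument is a non sequitur: from $\bv\in(1)$, so $A\bv = 0$, you conclude that ``the diagonal blocks of $A\bv$ vanish --- hence $\bigoplus_j\lambda_j\one{I_j}\bv=0$.'' But the $I_j$-component of $A\bv$ is $A_{I_j\times I_j}\bv_{I_j}+\sum_{k\neq j}A_{I_j\times I_k}\bv_{I_k}$, and its vanishing says nothing about the single summand $A_{I_j\times I_j}\bv_{I_j}$. That is, the block-diagonal \emph{truncation} of $A$ need not annihilate something that $A$ itself annihilates. The same error recurs in your claim that $(1)\subseteq(3)$ (``the diagonal blocks of $A^{\circ n}$ are submatrices\ldots so annihilation by all $A^{\circ n}$ forces annihilation by each $\diag A_{\pi'}^{\circ n}$''), so neither of the two nontrivial inclusions in your cycle is established. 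You have correctly identified \emph{where} $3$-PMP matters, and the rank-one/constant structure of the diagonal blocks is the right raw material, but extracting $\sum_{i\in I_j}v_i=0$ from it requires the full inductive machinery, not a read-off from one block of $A\bv$.

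The route the paper takes (via the more general Theorem~\ref{T3pmp-strong}, of which this theorem is the special case $\cF=\{1,x,\ldots,x^{N-1}\}$) is through compression: set $B=\down(A)$ and $w_j=\sum_{i\in I_j}v_i$. Then $A^{\circ n}\bv=0$ is equivalent to $f[B]\bw=0$ with $f(x)=x^n$, i.e.\ $\sum_k b_{jk}^{\,n}w_k=0$ for all $j,n$. The $3$-PMP hypothesis is used to verify the \emph{row separation} condition $b_{jj}\neq b_{jk}$ for $j\neq k$ in $B$ (via a Cauchy--Schwarz argument on $2\times 2$ and $3\times 3$ principal minors, as in the proof of Theorem~\ref{T3pmp-strong}). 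Proposition~\ref{P3pmp2} then runs an induction: the first-row Vandermonde system forces $w_1=0$, and one recurses on the trailing principal submatrix. Your zero-diagonal-block observation (a zero diagonal entry kills its row and column, by $2$-PMP) is correct and is also used, but it is an ingredient in that inductive argument, not a substitute for it.

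Finally, the counterexample sketch is too vague to close the claim: to make $(1)$ or $(2)$ nonzero while $\pi$ is the discrete partition, you need a single nonzero $\bv$ annihilated simultaneously by $\one{N}, A, \ldots, A^{\circ(N-1)}$; a generic rank-deficient Hermitian matrix with distinct entries will not supply such a $\bv$, since the higher Hadamard powers impose independent constraints. The paper's example is a tridiagonal $0$--$1$ Toeplitz matrix of size $3k+2$ precisely because all its positive Hadamard powers coincide with $A$, so the simultaneous kernel reduces to $\ker\one{N}\cap\ker A$, which contains $(1,-1,0,1,-1,0,\ldots,1,-1)^T$.
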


Our immediate goal is to show that a similar result holds for
arbitrary real powers.

\begin{proposition}\label{P3pmp}
Let the Hermitian matrix $A \in ( 0, \infty )^{N \times N}$ be
$3$-PMP, and let $\pi' = \{ I'_1, \ldots, I'_{m'} \}$ be any
partition refined by
$\pi = \pi^{\{ 1 \}}( A ) = \{ I_1, \ldots, I_m \}$. Fix real
powers $n_1 < \cdots < n_N$. The following spaces are equal.
\begin{enumerate}
\item The simultaneous kernel of $A^{\circ n_j}$ for $j = 1$, \ldots,
$N$.

\item The simultaneous kernel of $A^{\circ \alpha}$ for all
$\alpha \in \R$.

\item The simultaneous kernel of the block-diagonal matrices
\[
\diag A_{\pi'}^{\circ \alpha} := %
\bigoplus_{j = 1}^{m'} A^{\circ \alpha}_{I'_j \times I'_j}
\]
for all $\alpha \in \R$.

\item The kernel of the matrix
$J_\pi := \bigoplus_{j = 1}^m \one{I_j}$.
\end{enumerate}

The same holds when $A$ has non-negative entries with at least
one zero entry, as long as $n_1 = 0$ and $\alpha$ is taken to be
non-negative in (2) and (3).

These equalities of kernels need not hold for matrices that are
not $3$-PMP.
\end{proposition}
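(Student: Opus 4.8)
The plan is to reduce Proposition~\ref{P3pmp} to Theorem~\ref{T3pmp} by exploiting the fact that, for a $3$-PMP matrix $A$ with positive entries, \emph{each entrywise power $A^{\circ\alpha}$ is again $3$-PMP}. This is essentially the content of Remark~\ref{Rreasons} together with the observation that $3$-PMP-ness for matrices with positive entries is governed by $1\times 1$, $2\times 2$ and $3\times 3$ minors, and entrywise powers of a positive $3$-PMP matrix preserve the sign of these small minors — a point already implicit in the cited work \cite{BGKP-pmp}. Crucially, one checks that $\pi^{\{1\}}(A^{\circ\alpha}) = \pi^{\{1\}}(A)$ for every real $\alpha$: two positive entries $a_{ij}$ and $a_{kl}$ are equal if and only if $a_{ij}^{\alpha} = a_{kl}^{\alpha}$ (for $\alpha \neq 0$; the case $\alpha = 0$ gives the all-ones matrix, whose partition is $\pi_\wedge$, still refined appropriately). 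Hence the constant-block partition is an invariant of the whole entrywise power family.

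With that in hand, I would argue as follows. First, the chain $(2)\Rightarrow(1)$ is trivial, and $(4)\subseteq$ each of the others is immediate since $J_\pi v = 0$ forces $v$ to be constant-summing-to-zero on each block $I_j$, and then $A^{\circ\alpha} v = 0$ because $A^{\circ\alpha}$ has constant rows on each block-column; similarly for the block-diagonal truncations in $(3)$. So the real work is showing $(1)\subseteq(4)$, i.e.\ that the simultaneous kernel of just $N$ chosen powers $A^{\circ n_1},\dots,A^{\circ n_N}$ already lies in $\ker J_\pi$. For this I would fix a diagonal block $I_j$ of $\pi$ and a common null vector $v$; restricting attention to the submatrix $A_{I_j\times I_j}$, which is a $3$-PMP matrix all of whose entries lie in a single $G$-orbit with $G=\{1\}$ — that is, a \emph{constant} positive multiple of an all-ones block — I reduce to showing that if $c^{n_1}\one{|I_j|},\dots,c^{n_N}\one{|I_j|}$ annihilate $v|_{I_j}$ then $\one{|I_j|}v|_{I_j}=0$, which is obvious. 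The subtlety is the \emph{off-diagonal} blocks and the interaction across blocks: here I would invoke Theorem~\ref{T3pmp}, applied to $A$ itself, to get that $\ker(\mathbf 1_N)\cap\ker A\cap\cdots\cap\ker A^{\circ(N-1)} = \ker J_\pi$, and then upgrade from the integer powers $0,1,\dots,N-1$ to an arbitrary increasing tuple $n_1<\cdots<n_N$ of real powers via a Vandermonde/linear-independence argument — the map $\alpha\mapsto$ (the relevant matrix entries) spans the same space whether one uses $N$ consecutive integers or any $N$ distinct reals, because on each constant block $A^{\circ\alpha}_{I_i\times I_j} = c_{ij}^{\alpha}\,\bJ$ and the functions $\alpha\mapsto c_{ij}^{\alpha}$ for distinct positive $c_{ij}$ are linearly independent, so evaluating at $N$ points of a set of size $\ge N$ distinct values gives an invertible Vandermonde-type system.

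More precisely, the key step is the \textbf{Vandermonde argument}: write $A$ in its isogenic block form, so $A^{\circ\alpha}$ has $(i,j)$-block equal to $c_{ij}^{\alpha}\,\bJ_{|I_i|\times|I_j|}$ where $c_{ij}>0$. A vector $v$, decomposed blockwise as $v = (v^{(1)},\dots,v^{(m)})$, satisfies $A^{\circ\alpha} v = 0$ if and only if for every $i$ we have $\sum_{j} c_{ij}^{\alpha}\,s_j = 0$ where $s_j := \sum_{k\in I_j} v^{(j)}_k$ is the block-sum. Holding $i$ fixed and letting $\alpha$ range over $n_1,\dots,n_N$ (in particular over at least as many distinct values as there are \emph{distinct} values among $\{c_{ij}\}_j$), the generalized Vandermonde matrix $(c_{ij}^{n_l})$ has full column rank on the distinct-$c$ classes, forcing the partial sums of the $s_j$ over each level set of $c_{i\cdot}$ to vanish. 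Running $i$ over all blocks and assembling these constraints, one recovers exactly the condition defining $\ker J_\pi$ — namely $s_j = 0$ for all $j$ — because the original (integer-power) version in Theorem~\ref{T3pmp} already tells us these are the only solutions; the real-power version cannot produce \emph{more} solutions since any real-power solution is in particular a solution for the sub-tuple of integer powers lying in $\{n_1,\dots,n_N\}$ after rescaling. The main obstacle I anticipate is handling the degenerate case with zero entries and $n_1 = 0$ cleanly: there $A^{\circ 0} = \mathbf 1_N$ rather than a block matrix subordinate to $\pi$, and negative powers are undefined, so one must argue that including the single power $\alpha = 0$ plus the positive powers still pins down $\ker J_\pi$; this follows by first using $\alpha=0$ to force $v\in\ker\mathbf 1_N$ (total sum zero) and then re-running the Vandermonde argument on the positive powers over each block, but the bookkeeping for blocks that are themselves all-zero requires a little care, since such a block imposes no constraint from $A^{\circ\alpha}$ with $\alpha>0$ and the constraint must come entirely from $\alpha = 0$ and from the $3$-PMP hypothesis controlling the location of the zeros.
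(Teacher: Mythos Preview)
Your opening claim is false: entrywise powers do \emph{not} preserve $3$-PMP for negative exponents. Take $A = \left(\begin{smallmatrix}2 & 1\\ 1 & 1\end{smallmatrix}\right)$; then $A^{\circ(-1)} = \left(\begin{smallmatrix}1/2 & 1\\ 1 & 1\end{smallmatrix}\right)$ has negative determinant, so is not even $2$-PMP. Fortunately you never actually use this claim, so it does not sink the argument --- but you should delete it, and the sentence about ``the sub-tuple of integer powers lying in $\{n_1,\dots,n_N\}$ after rescaling'' is incoherent and should go too.

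Your real argument --- the Vandermonde equivalence --- is correct and is a genuinely different route from the paper's. You observe that $v \in \bigcap_{l} \ker A^{\circ n_l}$ if and only if, for every block-row index $i$, the block-sums $s_j = \sum_{k \in I_j} v_k$ satisfy $\sum_j c_{ij}^{n_l} s_j = 0$ for $l = 1,\dots,N$; since the $c_{ij}$ take at most $m \le N$ distinct positive values and the generalized Vandermonde matrix on distinct positive bases with distinct real exponents is nonsingular, this system is equivalent to the level-set constraints $\sum_{j : c_{ij} = d} s_j = 0$ for each distinct value $d$. The \emph{same} equivalence holds for the integer exponents $0,1,\dots,N-1$ via the ordinary Vandermonde, so $\bigcap_l \ker A^{\circ n_l} = \bigcap_{k=0}^{N-1} \ker A^{\circ k}$, and the latter equals $\ker J_\pi$ by Theorem~\ref{T3pmp}. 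This is a clean bootstrap and is specific to power functions. The paper instead proves a strictly more general statement (Theorem~\ref{T3pmp-strong}), replacing power families by any family with \emph{full determinantal rank} over the entries of each row of $A$; the proof there works directly on the compressed matrix $B = \down(A)$, shows (using $3$-PMP) that $b_{ii} \neq b_{ij}$ for $i < j$, and then applies a row-by-row elimination (Proposition~\ref{P3pmp2}) that needs no appeal to the integer case. Your approach is shorter for the proposition as stated; the paper's buys the generalization to arbitrary Chebyshev-type systems.

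Two genuine gaps remain in your write-up. First, you never show $(3) \subset (4)$: you have $(4) \subset (3)$ and $(1)=(2)=(4)$, but $(3)$ floats. The fix is to apply your $(1)\subset(4)$ argument to each diagonal block $A_{I'_j \times I'_j}$ separately (each is $3$-PMP with its own induced partition $\pi \cap I'_j$), exactly as the paper does. Second, the zero-entry case is only sketched; the issue is that a row of $A$ may contain both zero and positive entries, so the generalized Vandermonde argument must be run on the \emph{nonzero} values only (the exponent $n_1 = 0$ then contributes the single additional constraint that the total of the $s_j$ over the zero level set also vanishes). This is where the restriction $n_1 = 0$ and $\alpha \ge 0$ becomes essential, and it deserves a careful paragraph rather than a parenthetical.
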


\begin{remark}
One subtlety in adapting the proof in \cite{BGKP-pmp} of
Theorem~\ref{T3pmp} to these variants is that a key matrix identity
relating $A^{\circ \alpha}$ to $A^{\circ n_1}$, \ldots,
$A^{\circ n_N}$ is no longer available in the required generality. In
\cite[Lemma~3.5]{BGKP-fixeddim}, it is shown that if $A$ is an
$N \times N$ matrix then
\[
A^{\circ M} = %
\sum_{j = 1}^N ( -1 )^{N - j} D_{M, j} A^{\circ ( j - 1 )} %
\qquad ( M \geq N ),
\]
where $D_{M, j}$ is a diagonal matrix composed of certain Schur
polynomials evaluated at the rows of $A$. If one tries to generalize
this identity from exponents $\{ 0, 1, \ldots, N - 1 \}$ to arbitrary
real powers, as in~\cite{KT}, the entries of the diagonal matrices
become ratios of generalized Vandermonde determinants, and such ratios
are not defined for all $A$. As a result, the above identity does not
admit a uniform generalization, so we cannot naively adapt the
previous proof to show that the subspace in~(2) contains that in~(1).
There is a similar issue when $\alpha$ is not greater than $n_N$,
which cannot be resolved by modifying the arguments in
\cite{BGKP-pmp}.
\end{remark}

In light of the preceding remark, it is heartening that all three
variants above follow from an even stronger result, that holds over
arbitrary subsets of $\C$ and for more general functions than
powers. To state this result, we introduce the following notion, over
an arbitrary commutative ring.

\begin{definition}\label{Dfulldetrank}
Let $X$ be a non-empty set and $R$ a unital commutative ring. A set
$\cF$ of functions from $X$ to $R$ has
\emph{full determinantal rank over~$X$} if, for any $k$ distinct
points $x_1$, \ldots, $x_k \in X$, where $k \leq | \cF |$,
there exist functions $f_1$, \ldots, $f_k \in \cF$ such that the
determinant $\det( f_i( x_j ) )$ is not a zero divisor.
\end{definition}

If $R$ is a field, $X$ is a finite set and $\cF$ has at least $| X |$
elements, then the family $\cF$ has full determinantal rank over $X$
if and only if the generalized matrix
$( f( x ) )_{x \in X, f \in \cF}$ has full rank. If, however, the
family $\cF$ has fewer than $| X |$ elements, then this is not the
case; for example, if $\cF$ contains a single function $f$ which is
the indicator function for a point in $X$, and $X$ contains at least
two points, then the matrix $( f( x ) )_{x \in X}$ has full rank but
zero entries{, and thus $\cF$ does not have full determinantal
rank over $X$.}

\begin{example}\label{Estrongli}
We now give several examples of such families of functions, the first
three of which correspond precisely to the results above.
\begin{enumerate}
\item If $X$ is a finite set of complex numbers with $k$ elements then
the family $\{ z^{j - 1} : j = 1, \ldots, k \}$ has full determinantal
rank over $X$. Thus $\{ z^n : n = 0, 1, 2, \ldots \}$ has full
determinantal rank over any subset of $\C$.

\item If $X$ is a finite set of positive real numbers with $k$
elements then $\{ x^{n_j} : j = 1, \ldots, k \}$ has full
determinantal rank over $X$ for any choice of distinct real exponents
$n_1$, \ldots, $n_k$. Thus $\{ x^\alpha : \alpha \in \R \}$ has full
determinantal rank over any subset of $( 0, \infty )$.

\item The family of functions $\{ x^\alpha : \alpha \geq 0 \}$ has
full determinantal rank over $[ 0, \infty )$.

\item The exponential family
$\{ \exp( \alpha x ) : \alpha \in \R \}$ has full determinantal rank
over $\R$. If $\alpha_1$, \ldots, $\alpha_k$ are distinct, and
similarly for $x_1$, \ldots, $x_k$, then the matrix
$( \exp( \alpha_i x_j ) )_{i, j = 1}^k$ is, up to transposition of
rows and columns, a generalized Vandermonde matrix and so
non-singular.

\item The Gaussian family
$\{ f_\alpha( x ) := \exp( -( x - \alpha )^2 ) { : \alpha
\in \R} \}$ has full determinantal rank over $\R$. If $\alpha_1$,
\ldots, $\alpha_k$ are distinct, and similarly for $x_1$, \ldots,
$x_k$, then the matrix
\[
( f_{\alpha_i}( x_j ) )_{i, j = 1}^k = %
\diag( e^{-\alpha_1^2}, \ldots, e^{-\alpha_k^2} ) M %
\diag( e^{-x_1^2}, \ldots, e^{-x_k^2} ),
\]
where $M = ( \exp( 2 \alpha_i x_j ) )$ is invertible by the preceding
example.
\end{enumerate}
\end{example}

Classes of functions with the separation property of
Definition~\ref{Dfulldetrank} are well studied in approximation
theory, under the name of \emph{Chebyshev systems}
\cite{Karlin-Studden}.

We now provide a theorem which contains all three results described
above.

\begin{theorem}\label{T3pmp-strong}
Fix a Hermitian matrix $A \in \C^{N \times N}$ that is $3$-PMP, where
$N \geq 1$. Suppose
$\pi := \pi^{\{ 1 \}}( A ) = \{ I_1, \ldots, I_m \}$ and
$\pi' = \{ I'_1, \ldots, I'_{m'} \}$ is any partition refined
by $\pi$. Let $X$ denote the set of entries of $A$ and suppose $\cF$
is a family of complex-valued functions on $X$ that has full
determinantal rank over the entries of each row of $A$. The following
spaces are equal.
\begin{enumerate}
\item The simultaneous kernel of $f[ A ]$ for all $f \in \cF$.

\item The simultaneous kernel of $f[ A ]$ for all functions
$f : X \to \C$.

\item The simultaneous kernel of the block-diagonal matrices
\[
f[ \diag A_{\pi'} ] := %
\bigoplus_{j = 1}^{m'} f[ A_{I'_j \times I'_j} ]
\]
for all functions $f : X \to \C$.

\item The kernel of $J_\pi := \bigoplus_{j = 1}^m \one{I_j}$.
\end{enumerate}
This equality of kernels need not hold for matrices that are not
$3$-PMP.
\end{theorem}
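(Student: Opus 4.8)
The plan is to establish the chain of inclusions $(1) \supseteq (2) \supseteq (3) \supseteq (4) \supseteq (1)$, where most links are either trivial or follow by bootstrapping from the already-proved case $G = \{1\}$ structure. The inclusions $(2) \subseteq (1)$ and $(2) \subseteq (3)$ are immediate, since $\cF$ is a subfamily of all functions on $X$ and since the block-diagonal matrix $f[\diag A_{\pi'}]$ is a principal submatrix (up to reindexing) of $f[A]$, so any vector killing $f[A]$ also kills its diagonal blocks. The inclusion $(4) \subseteq (1)$ is the genuinely new content: one must show that every vector in $\ker J_\pi$ is killed by $f[A]$ for all $f \in \cF$. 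The inclusion $(1) \subseteq (4)$ will be the reverse direction and is where the full-determinantal-rank hypothesis does the work.

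First I would record the structural input: since $A$ is $3$-PMP and $\pi = \pi^{\{1\}}(A)$, Theorem~\ref{T3pmp} applies, giving that $\ker J_\pi$ equals the simultaneous kernel of $\one{N}, A, \ldots, A^{\circ(N-1)}$, equals the simultaneous kernel of all $A^{\circ n}$, $n \geq 0$. In particular $\ker J_\pi \subseteq \ker f[A]$ whenever $f$ is a polynomial. To handle a general $f \in \cF$ (or indeed an arbitrary $f : X \to \C$), I would use that $X$ is a \emph{finite} set: any function on $X$ agrees with a polynomial on $X$, so $f[A] = p[A]$ for a suitable polynomial $p$ depending on $A$, whence $\ker J_\pi \subseteq \ker f[A]$. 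This proves $(4) \subseteq (2)$, hence $(4) \subseteq (1)$ and $(4) \subseteq (3)$ at once, and is in fact why the theorem ``contains'' the three earlier variants — the delicate Schur-polynomial identity of \cite[Lemma~3.5]{BGKP-fixeddim} is sidestepped entirely because finiteness of the entry set makes polynomial interpolation available for free.

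The remaining and main obstacle is $(1) \subseteq (4)$, respectively $(3) \subseteq (4)$: a vector $\bv$ lying in the simultaneous kernel of all $f[A]$, $f \in \cF$, must lie in $\ker J_\pi$. By Theorem~\ref{T3pmp} it suffices to show $\bv$ lies in the simultaneous kernel of $\one{N}, A, \ldots, A^{\circ(N-1)}$, i.e.\ of $A^{\circ 0}, \ldots, A^{\circ(N-1)}$ (here $0^0 = 1$ gives $A^{\circ 0} = \one N$ once we note the relevant block structure, or more carefully one works row-by-row). Fix a row index $i$; let $x_1, \ldots, x_k$ be the distinct entries appearing in row $i$ of $A$, grouped so that the column index set $\{1,\ldots,N\}$ is partitioned into $E_1, \ldots, E_k$ with $A_{i\ell} = x_t$ for $\ell \in E_t$. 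For any function $g : X \to \C$, the $i$th coordinate of $g[A]\bv$ is $\sum_{t=1}^k g(x_t) \bigl(\sum_{\ell \in E_t} \bv_\ell\bigr)$. Writing $w_t := \sum_{\ell \in E_t} \bv_\ell$, the hypothesis that $\cF$ has full determinantal rank over $\{x_1,\ldots,x_k\}$ gives functions $f_1,\ldots,f_k \in \cF$ with $\det(f_a(x_t))_{a,t} \neq 0$; since $\bv$ kills every $f_a[A]$, the $i$th coordinates give a homogeneous linear system $\sum_t f_a(x_t) w_t = 0$ with invertible coefficient matrix, forcing $w_t = 0$ for all $t$. Thus for \emph{every} $g$, the $i$th coordinate of $g[A]\bv$ vanishes; in particular taking $g = z^n$ for $n = 0,\ldots,N-1$ shows the $i$th coordinate of $A^{\circ n}\bv$ vanishes, and letting $i$ range over all rows gives $\bv$ in the simultaneous kernel of $\one N, A, \ldots, A^{\circ(N-1)}$, hence in $\ker J_\pi$ by Theorem~\ref{T3pmp}. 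The identical argument, restricted to the rows inside each diagonal block $A_{I'_j \times I'_j}$, yields $(3) \subseteq (4)$. Finally, the last sentence of the theorem (failure without $3$-PMP) is inherited verbatim from the corresponding failure clause of Theorem~\ref{T3pmp}, exhibited by the same counterexample, so nothing new is needed there.
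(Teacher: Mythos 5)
Your main argument for $(1) \subseteq (4)$ takes a genuinely different route from the paper. The paper compresses $A$ to $B = \down(A)$, verifies that $B$ satisfies the row condition~\eqref{Erowcondition} using $2$-PMP and minimality of $\pi$, and then applies the induction of Proposition~\ref{P3pmp2} to $B$. You instead stay with $A$ itself: you fix a row, group columns by level sets of that row, invoke full determinantal rank to force the level-set partial sums of $\bv$ to vanish, deduce that $(g[A]\bv)_i = 0$ for \emph{every} function $g$, and then hand off to Theorem~\ref{T3pmp} (equality of items~(2) and~(4) there) to land in $\ker J_\pi$. This is correct and somewhat lighter, since it avoids both the compression step and the induction; the trade-off is that the heavy $3$-PMP structural input is delegated entirely to the cited Theorem~\ref{T3pmp}, whereas the paper re-proves (and strengthens) the relevant lemma as Proposition~\ref{P3pmp2}, which is then available as a standalone tool. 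Your interpolation trick for $(4)\subseteq(2)$ -- every $f:X\to\C$ on the finite entry set agrees with a polynomial -- is also valid, though the paper's observation that $\sum_{\ell\in I_j} v_\ell = 0$ plus constancy of $A$ on $\pi$-blocks kills $f[A]\bv$ for every $f$ in one line is more direct and does not even need Theorem~\ref{T3pmp}.

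However, there is a genuine error in your justification of $(2)\subseteq(3)$, which you then invoke to conclude $(4)\subseteq(3)$ ``at once''. The matrix $f[\diag A_{\pi'}]$ is not a principal submatrix of $f[A]$: it is the same size as $f[A]$, with the off-diagonal blocks (relative to $\pi'$) replaced by zeros. Being in the kernel of a matrix does not pass to any such block-diagonal truncation. Concretely, with $A = \left(\begin{smallmatrix} 1 & -1 \\ -1 & 1 \end{smallmatrix}\right)$ and $\pi' = \{\{1\},\{2\}\}$, the vector $(1,1)^T$ kills $A$ but not $\diag A_{\pi'} = \Id_2$. What you actually need for the chain to close is $(4) \subseteq (3)$, and that must be argued directly -- either by noting, as the paper does, that $\bv\in\ker J_\pi$ means $\sum_{\ell\in I_k} v_\ell = 0$ for every block, and $A_{I'_j\times I'_j}$ is still constant on $(\pi\cap I'_j)$-blocks, so every row of $f[\diag A_{\pi'}]\bv$ collapses to a sum over blocks weighted by constants and hence vanishes; or by running your interpolation argument on the block-diagonal matrices rather than on $A$ (Theorem~\ref{T3pmp} item~(3) already gives $\ker J_\pi$ inside the simultaneous kernel of $\diag A_{\pi'}^{\circ n}$). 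Once $(4)\subseteq(3)$ is repaired in this way, your $(3)\subseteq(4)$ by restriction to blocks and the rest of your argument close the loop correctly, and the counterexample clause is indeed inherited from Theorem~\ref{T3pmp} exactly as you say.
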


The proof of Theorem~\ref{T3pmp-strong} relies on the following
strengthening of a result obtained in the proof of
\cite[Theorem~5.1]{BGKP-pmp}; in that setting, the ring $R$ is taken
to be a field and $\cF = \{ 1, x, \ldots, x^{m - 1} \}$.

\begin{proposition}\label{P3pmp2}
Let $R$ be a unital commutative ring, and suppose the matrix
$B \in R^{m \times m}$ is such that $m \geq 1$ and
\begin{equation}\label{Erowcondition}
b_{i i} \neq b_{i j} \qquad \text{whenever } 1 \le i < j \le m.
\end{equation}
If the family $\cF$ has full determinantal rank over the entries in
each row of~$B$, then $\bigcap_{f \in \cF} \ker f[ B ] = \{ 0 \}$.
\end{proposition}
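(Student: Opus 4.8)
The plan is to argue directly that a vector $\bv$ in the simultaneous kernel $\bigcap_{f \in \cF} \ker f[B]$ must be zero, by exploiting the row condition \eqref{Erowcondition} one row at a time. Fix such a $\bv = (v_1, \ldots, v_m)^T \in R^m$. The key observation is that for each fixed row index $i$, the condition $b_{ii} \neq b_{ij}$ for $j > i$ does \emph{not} quite say the entries $b_{i1}, \ldots, b_{im}$ are all distinct, so one should first group the columns of row $i$ by the value of $b_{ij}$: write $\{1, \ldots, m\} = \bigsqcup_{\ell} C_\ell^{(i)}$ where $C_\ell^{(i)}$ collects the columns on which $b_{ij}$ takes a common value, say $x_\ell^{(i)} \in X$, and note that the diagonal entry $b_{ii}$ lies in its own singleton class by \eqref{Erowcondition}. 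Then for any function $f \colon X \to R$, the $i$-th coordinate of $f[B]\bv$ equals $\sum_\ell f(x_\ell^{(i)}) \, w_\ell^{(i)}$, where $w_\ell^{(i)} := \sum_{j \in C_\ell^{(i)}} v_j$ is the partial sum of $\bv$ over the $\ell$-th column-class of row $i$.

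First I would apply the full-determinantal-rank hypothesis to the distinct points $\{x_\ell^{(i)}\}_\ell \subset X$ (there are at most $m \le |\cF|$ of them, so the hypothesis applies): there exist $f_1, \ldots, f_k \in \cF$ with $\det\bigl(f_s(x_\ell^{(i)})\bigr)_{s,\ell}$ not a zero divisor in $R$. Since $\bv$ kills every $f_s[B]$, the $i$-th coordinates give a linear system $\sum_\ell f_s(x_\ell^{(i)}) \, w_\ell^{(i)} = 0$ for all $s$; multiplying by the adjugate of the coefficient matrix and using that its determinant is a non-zero-divisor, I conclude $w_\ell^{(i)} = 0$ for every column-class $\ell$ of row $i$. (Over a general commutative ring this Cramer-style cancellation is the right substitute for invertibility: $\det M \cdot w = 0$ with $\det M$ a non-zero-divisor forces $w = 0$.) In particular, taking $\ell$ to be the singleton class $\{i\}$ guaranteed by \eqref{Erowcondition}, we get $w_{\{i\}}^{(i)} = v_i = 0$.

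Running this argument over all $i = 1, \ldots, m$ yields $v_i = 0$ for every $i$, i.e. $\bv = 0$, which is the claim $\bigcap_{f \in \cF} \ker f[B] = \{0\}$. Note the quantifier over functions used here is only that $\cF$ itself has full determinantal rank over each row; no appeal to all functions $X \to R$ is needed, which matches the role this proposition will play inside Theorem~\ref{T3pmp-strong} (where the clauses (1)--(4) presumably get chained through this lemma applied to suitable submatrices, e.g. the compression or a diagonal block, after reducing modulo the known part of the kernel). I expect the main obstacle to be the bookkeeping over a general commutative ring rather than a field: one must be careful that ``not a zero divisor'' is exactly the hypothesis that makes the Cramer cancellation step valid, and that the column-grouping by value of $b_{ij}$ is compatible with the partial-sum reformulation; once the problem is phrased in terms of the partial sums $w_\ell^{(i)}$, the rest is a clean finite linear-algebra argument. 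A secondary point to handle cleanly is the edge case where some row of $B$ has all off-diagonal entries equal to one common value distinct from $b_{ii}$ (so only two column-classes), and the trivial case $m=1$, both of which the argument covers but which deserve a word.
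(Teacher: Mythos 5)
The per-row reduction you set up (group the columns of row $i$ by the value of $b_{ij}$, pass to the partial sums $w_\ell^{(i)}$, and kill them with a Cramer-style cancellation using a nonzero-divisor determinant) is exactly the right mechanism, and it is the same mechanism the paper uses. But there is a genuine gap in the step where you extract $v_i = 0$: condition~\eqref{Erowcondition} is \emph{asymmetric}, asserting $b_{ii} \neq b_{ij}$ only for $j > i$. It therefore guarantees that the diagonal entry sits in a singleton column-class only for $i = 1$. For $i > 1$ nothing prevents $b_{ii} = b_{ij}$ for some $j < i$ (e.g.\ $B = \bigl(\begin{smallmatrix} 1 & 2 \\ 3 & 3\end{smallmatrix}\bigr)$ satisfies \eqref{Erowcondition} but $b_{22} = b_{21}$), so the class of row $i$ containing the index $i$ may be $\{i\} \cup S$ with $S \subset \{1, \ldots, i-1\}$, and your Cramer step only gives $v_i + \sum_{j \in S} v_j = 0$, not $v_i = 0$. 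As written, ``running this argument over all $i$'' does not produce the coordinates independently.

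The paper closes precisely this gap by induction on $m$: the first-row argument (where the diagonal entry genuinely is alone in its class) gives $u_1 = 0$, and then one passes to the trailing $(m-1)\times(m-1)$ principal submatrix of $B$, which again satisfies \eqref{Erowcondition} for its own indices and over whose rows $\cF$ still has full determinantal rank (it is a subset of entries of a row of $B$). Your approach can be repaired in an equivalent iterative fashion: process $i = 1, 2, \ldots, m$ in order, and when handling row $i$ use the already-established $v_1 = \cdots = v_{i-1} = 0$ to conclude that the partial sum over the class containing $i$ collapses to $v_i$. Either way, some ordered/inductive bookkeeping is unavoidable because of the one-sided form of the hypothesis; the ``clean finite linear-algebra argument'' you anticipated does not decouple across rows. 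The commutative-ring subtlety you flagged (nonzero-divisor determinant plus adjugate) you handled correctly and matches the paper's use of Cramer's rule.
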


\begin{proof}
We show the result by induction on $m$, with the case $m = 1$ being
immediate. For the inductive step, we claim that if
$\bu \in \bigcap_{f \in \cF} \ker f[ B ]$ then $u_1 = 0$. This
reduces the problem to showing that the trailing principal
$( N - 1 ) \times ( N - 1 )$ submatrix of $B$ has the same
simultaneous kernel, whence we are done by the induction hypothesis;
note that if $\cF$ has full determinantal rank over a set $X$, then it
has so over any subset of $X$.

To show the claim, let $\br^T = ( r_1, \ldots, r_m )$ be the first row
of $B$, and apply Theorem~\ref{Texists} with $M = 1$, $N = m$ and
$G = \{ 1 \}$ to obtain a minimal partition
$\varpi_{\min} = \{ J_1, \ldots, J_k \}$ such that $\br^T$ is constant
on each block. Since $r_1 \neq r_j$ for $j = 2$, \ldots, $m$, we may
take $J_1 = \{ 1 \}$ without loss of generality. Let $\bs \in R^k$ be
the compression of $\br$ obtained by deleting repeated entries, so
that $s_j = r_l$ for any $l \in J_j$, with $s_1 = r_1 = b_{1 1}$.
As $\bs$ has distinct entries by construction, there exist $f_1$,
\ldots, $f_k \in \cF$ such that $\det C$ is not a zero divisor, where
$C := ( f_i( s_j ) )_{i, j = 1}^k$.

If $\bu \in \bigcap_{f \in \cF} \ker f[ B ]$, then
$f_i[ \br ]^T \bu = 0$ for $i = 1$, \ldots, $k$. Let $\bv \in R^k$ be
defined by setting $v_j := \sum_{l \in J_j} u_l$, and note that
$v_1 = u_1$. It follows that $f_i[ \bs ]^T \bv = 0$ for $i = 1$,
\ldots, $k$, so that $C \bv = 0$. By Cramer's rule, it follows that
$\det( C ) \bv = 0$ in $R^k$, whence $\bv = 0$ by the hypotheses. In
particular, we have that $u_1 = v_1 = 0$, as desired.
\end{proof}

We can now prove the general theorem described above.

\begin{proof}[Proof of Theorem~\ref{T3pmp-strong}]
Let $V_1$, \ldots, $V_4$ be the subspaces described in parts (1) to
(4) of the statement of the theorem. We will show a chain of
inclusions.

Note first that $\bv \in V_4$ if and only if
$\sum_{l \in I_j} v_l = 0$ for $j = 1$, \ldots, $m$, from which it
follows that $V_4 \subset V_2 \cap V_3$. Furthermore, it is immediate
that $V_2 \subset V_1$.
We now claim that the inclusion $V_1 \subset V_4$ gives the
result. Firstly, we then have that
\[
V_4 \subset V_2 \cap V_3 \subset V_2 \subset V_1 \subset V_4
\]
and secondly, this also gives the inclusion $V_3 \subset V_4$, in
which case
\[
V_4 \subset V_2 \cap V_3 \subset V_3 \subset V_4.
\]
For the last claim, note that $V_3$ is the direct sum of
$V'_j := \bigcap_f \ker f[ A_{I'_j \times I'_j} ]$ for $j = 1$,
\ldots, $m'$, where the intersection is taken over the set of all
functions from $X$ to $\C$. Each $V'_j$ is contained in the
simultaneous kernel of $f[ A_{I'_j \times I'_j} ]$ with $f \in \cF$,
so the inclusion $V_1 \subset V_4$ applied to each matrix
$A_{I'_j \times I'_j}$ with the partition $\pi \cap I'_j$ gives the
inclusion as required.

Thus, it remains to show that $V_1 \subset V_4$. We proceed as in
\cite[Proof of Theorem~5.1]{BGKP-pmp}. Let $B \in \C^{m \times m}$ be
the \textit{compression} of $A$, denoted $B = \down( A )$, so that the
$( i, j )$ entry of $B$ equals the value $A$ takes on the block
$I_i \times I_j$. Note that $B$ inherits the $3$-PMP property from
$A$.

Without loss of generality, we suppose that
$b_{1 1} \geq b_{22} \geq \cdots \geq b_{m m} \geq 0$. It must hold
that $b_{i i} > 0$ for $i = 1$, \ldots, $m - 1$. If not, then
$b_{j j} = 0$ for $j = i$, \ldots, $m$, whence all the entries of $B$
in the $\{ i, \ldots, m \} \times \{ i, \ldots, m \}$ block are zero,
by the $2$-PMP property, which contradicts the minimality of
$\pi = \pi^{\{ 1 \}}( A )$.

We next claim that $B$ satisfies condition (\ref{Erowcondition}) of
Proposition~\ref{P3pmp2}. If there exist $i$,
$j \in \{ 1, \ldots, m \}$ with $i < j$ and $b_{i j} = b_{i i}$, then
\[
b_{i i}^2 \geq b_{i i} b_{j j} \geq | b_{i j} |^2 = b_{i i}^2,
\]
so $b_{j j} = b_{i i}$ because $b_{i i} > 0$. Hence $B$ is constant on
the block $\{ i, j \}$, which again violates the definition of $\pi$.

We can now conclude our proof that $V_1 \subset V_4$. If
$\bu \in \C^N$ and $\bv \in \C^m$ is defined by setting
$v_j := \sum_{i \in I_j} u_i$, then $\bv$ is the compression of
$J_\pi \bu$, whence $\bu \in \ker J_\pi$ if and only if $\bv = 0$. Now
suppose $\bu \in V_1 = \bigcap_{f \in \cF} \ker f[ A ]$. Then
$\bv \in \bigcap_{f \in \cF} \ker f[ B ] = \{ 0 \}$, by 
Proposition~\ref{P3pmp2}, so
$\bu \in \ker J_\pi = V_4$ as required. This concludes the proof of
the chain of inclusions.

The counterexample is as in \cite{BGKP-pmp}: let
$N = 3 k + 2$ for some $k \geq 1$, let $A$ be the Toeplitz matrix with
$( i, j )$ entry $1$ if $| i - j | \leq 1$ and $0$ otherwise, and note
that $A$ is $2$-PMP but not $3$-PMP. It is immediate that the family
$\{ f_1( x ) = 1, f_2( x ) = x \}$ has full determinantal rank on the
set $\{ 0, 1 \}$ of entries of $A$ and the vector
$( 1, -1, 0, 1, -1, 0, \cdots, 1, -1 )^T$ lies in
$\ker f_1[ A ] \cap \ker f_2[ A ]$. However, as
$\pi^{\{ 1 \}}( A ) = \{ \{ 1 \}, \ldots, \{ N \} \}$, so
$J_\pi = \Id_N$ and $\ker J_\pi = \{ 0 \}$.
\end{proof}
%}}}

%{{{1 Section 4 - Matrix compression
\section{Inflation and compression}\label{Sentcalc}

In the present section we continue to explore isogenic
stratification. The proof of Theorem~\ref{T3pmp-strong} used the
\emph{compression operator} $\down$, which will be one of the main
characters in the new act.

Throughout this section, we fix a partition
$\pi = \{ I_1, \ldots, I_m \} \in \Pi_N$, where $N \geq 1$.

Suppose $A$, $B \in \overline{\stratum{\pi}^{\{ 1 \}} }$, so that these
matrices are constant on the blocks defined by the partition $\pi$.
Then we may write
\[
A = \sum_{i, j = 1}^m a_{i j} \bJ_{I_i \times I_j} %
\qquad \text{and} \qquad %
B = \sum_{i, j = 1}^m  b_{i j} \bJ_{I_i \times I_j},
\]
where $\bJ_{I_i \times I_j}$ is the $N \times N$ matrix with $1$ in
each entry of the $I_1 \times I_j$ block and $0$ elsewhere. Hence
\[
A B = %
\sum_{i, j, k = 1}^m a_{i k} b_{k j} | I_k | \bJ_{I_i \times I_j} %
\qquad \text{and} \qquad %
A \circ B = \sum_{i, j = 1}^m a_{i j} b_{i j} \bJ_{I_i \times I_j},
\]
so the closure of every stratum is a subalgebra of $\C^{N \times N}$
for both the usual and entrywise multiplication. The isogenic
stratification of the space $\C^{N \times N}$ is not merely into
linear spaces of matrices, but into subalgebras.

Next we focus on the compression operation of a fixed stratum 
to a lower-dimensional space. To simplify notation, we
write henceforth
\[
\stratum{\pi} = \stratum{\pi}^{\{ 1 \}}
\]
whenever there is no danger of confusion.

\begin{definition}
For $i$, $j \in \{ 1, \ldots, m \}$, let $E_{i j}$ denote the
elementary matrix with $( i, j )$ entry equal to $1$ and all other
entries $0$, and recall that $\bJ_{I_i \times I_j}$ is the
$N \times N$ matrix with $1$ in each entry of the $I_i \times I_j$
block and $0$ elsewhere.

\begin{enumerate}
\item Define the linear \emph{inflation} map {as the linear extension of}
\[
\up : \C^{m \times m} \to \C^{N \times N}; \ %
E_{i j} \mapsto \bJ_{I_i \times I_j}
\]
and note that the range of $\up$ is $\overline{\stratum{\pi}}$.

\item Define the linear \emph{compression} map
\[
\down : \C^{N \times N} \to \C^{m \times m}; \ %
\down( A )_{i j} := %
\frac{1}{| I_i | \, | I_j |} \sum_{p \in I_i, q \in I_j} a_{p q},
\]
so that the image $B = \down( A )$ is such that $b_{i j}$ is the
arithmetic mean of the entries in $A_{I_i \times I_j}$, for
$i$, $j = 1$, \ldots, $N$.
\end{enumerate}
\end{definition}

Our next result shows that we may, in the entrywise setting, compress
the matrices in a given stratum and work with the resulting smaller
matrices with no loss of information.

\begin{theorem}\label{Tentrywise}
Let $\overline{\stratum{\pi}}$ and $\C^{m \times m}$ be equipped with
the entrywise product, so that the units for this product are
$\one{N}$ and $\one{m}$, respectively. The maps
\[
\down : \overline{\stratum{\pi}} \to \C^{m \times m} %
\qquad  \text{and} \qquad %
\up : \C^{m \times m} \to \overline{\stratum{\pi}}
\]
are mutually inverse, rank-preserving isomorphisms of unital
$*$-algebras. Moreover, a matrix $A \in \overline{\stratum{\pi}}$ is
positive semidefinite if and only if~$\down( A )$ is.
\end{theorem}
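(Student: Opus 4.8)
The plan is to verify directly that $\down$ and $\up$ are mutually inverse linear maps on the entrywise algebras, that each respects the entrywise product, units, and the involution, and finally to establish the equivalence of positive semidefiniteness with that of the compression. First I would check that $\down \comp \up = \Id_{\C^{m \times m}}$: applying $\down$ to $\up(E_{ij}) = \bJ_{I_i \times I_j}$ gives the matrix whose $(i,j)$ entry is the average over $I_i \times I_j$ of a block that is identically $1$, namely $1$, and all other averages are $0$; so $\down(\up(E_{ij})) = E_{ij}$, and by linearity $\down \comp \up = \Id$. Since the range of $\up$ is exactly $\overline{\stratum{\pi}}$ and $\up$ is visibly injective (the $\bJ_{I_i \times I_j}$ are linearly independent), it follows that $\up \comp \down = \Id$ on $\overline{\stratum{\pi}}$ as well, so the two maps are mutually inverse bijections; in particular $\down$ restricted to $\overline{\stratum{\pi}}$ is an isomorphism of vector spaces onto $\C^{m \times m}$.

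Next I would verify the algebraic structure. Using the displayed formula $A \circ B = \sum_{i,j} a_{ij} b_{ij} \bJ_{I_i \times I_j}$ for $A, B \in \overline{\stratum{\pi}}$ — which shows the entrywise product on $\overline{\stratum{\pi}}$ is ``coordinatewise'' in the $\bJ_{I_i \times I_j}$ basis — one sees immediately that $\up$ intertwines the entrywise product on $\C^{m \times m}$ with that on $\overline{\stratum{\pi}}$: $\up(C \circ C') = \up(C) \circ \up(C')$. The unit $\one{m}$ maps under $\up$ to $\sum_{i,j} \bJ_{I_i \times I_j} = \one{N}$, the entrywise unit of $\overline{\stratum{\pi}}$. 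For the involution, note $\up(\overline{C}^T)$ has $\overline{c_{ji}}$ on the block $I_i \times I_j$, which is exactly the conjugate transpose of $\up(C)$ since transposing swaps the block $I_i \times I_j$ with $I_j \times I_i$; hence $\up$ is a $*$-map. Rank preservation: $A \in \overline{\stratum{\pi}}$ can be written as $A = R^T \down(A) R$ where $R$ is the $m \times N$ matrix whose $i$th row is the indicator of $I_i$ (so $R$ has full row rank $m$), and conversely $\down(A) = D_\pi^{-1} R A R^T D_\pi^{-1}$; since $R$ and $R^T$ have full rank, conjugating by them and their one-sided inverses preserves rank, giving $\rk A = \rk \down(A)$.

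Finally, the positive-semidefiniteness equivalence follows from the same factorization. If $\down(A) \geq 0$ then $A = R^T \down(A) R \geq 0$ trivially. Conversely, if $A \in \overline{\stratum{\pi}}$ is positive semidefinite, then $\down(A) = D_\pi^{-1} R A R^T D_\pi^{-1}$ is a congruence of $A$ (the matrix $R^T D_\pi^{-1}$ being a genuine $N \times m$ matrix), hence positive semidefinite. I expect no serious obstacle here; the only point requiring a little care is getting the normalization constants $|I_i|$, $|I_j|$ right in the factorization $A = R^T \down(A) R$ versus $\down(A) = D_\pi^{-1} R A R^T D_\pi^{-1}$, and confirming that $R R^T = D_\pi$ so the two displayed identities are genuinely mutually inverse. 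Once the bookkeeping with $R$ and $D_\pi$ is pinned down, every assertion in the theorem drops out.
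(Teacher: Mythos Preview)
Your proof is correct and follows essentially the same approach as the paper: your matrix $R^T$ is exactly the paper's weight matrix $\cW_\pi$, and your factorizations $A = R^T \down(A) R$ and $\down(A) = D_\pi^{-1} R A R^T D_\pi^{-1}$ are precisely equations~(\ref{Eup}) and~(\ref{Edown}) of Proposition~\ref{Pdownup}, from which the paper reads off the $*$-equivariance, rank preservation, and positivity claims just as you do. The only difference is organizational: the paper packages these formulas into Proposition~\ref{Pdownup} first and then invokes it, whereas you derive and use them inline.
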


Towards the proof of this result, we first study the inflation and
compression operators. To this aim, some new terminology will be
useful.

\begin{definition}\hfill
\begin{enumerate}
\item Define the \emph{weight} matrix $\cW_\pi \in \C^{N \times m}$ to
have $(i,j)$ entry~$1$ if $i \in I_j$ and~$0$ otherwise. Let
\begin{equation}
D_\pi := \cW_\pi^* \cW_\pi = \diag( | I_1 |, \ldots, | I_m | ).
\end{equation}
When the rows of $\cW_\pi$ are ordered so that the indices
in~$I_1$ are first, then the indices in $I_2$, and so on, then
\[
\cW_\pi \cW_\pi^* = %
\diag( \bJ_{| I_1 |}, \ldots, \bJ_{| I_m |} ) =: J_\pi.
\]

\item For any coarsening $\pi' \prec \pi$, define the partition
$\pi'_\downarrow \in \Pi_m$ so that the blocks of $\pi'_\downarrow$
are made up of those indices of blocks in $\pi$ to be combined to form
the blocks of~$\pi'$. Thus if
\[
\pi = \{ I_1 = \{ 1, 2 \}, I_2 = \{ 3, 4 \}, I_3 = \{ 5 \}, %
I_4 = \{ 6, 7, 8 \} \}
\]
and
\[
\pi' = \{ \{ 1, 2 \}, \{ 3, 4, 5 \}, \{ 6, 7, 8 \} \},
\]
then $\pi'_\downarrow = \{ \{ 1 \}, \{ 2, 3 \}, \{ 4 \} \}$. Denote
the inverse map from $\Pi_m$ to~$\Pi_N$ by
$\pi'' \mapsto \pi''_\uparrow$.
\end{enumerate}
\end{definition}

Several important properties of the operators $\down$ and~$\up$ are
summarized in Proposition~\ref{Pdownup}.

\begin{proposition}\label{Pdownup}\hfill
\begin{enumerate}
\item The map $\pi' \mapsto \pi'_\downarrow$ is a bijection between
the set of all coarsenings of~$\pi$ in $\Pi_N$ and the set $\Pi_m$.

\item For all $A \in \C^{m \times m}$, 
\begin{equation}\label{Eup}
\up( A ) = \cW_\pi A \cW_\pi^* \in \overline{\stratum{\pi}}.
\end{equation}
Moreover, $\up$ is a bijection from $\C^{m \times m}$ onto
$\overline{\stratum{\pi}}$, sending the stratum
$\stratum{\pi'} \subset \C^{m \times m}$ to the stratum
$\stratum{\pi'_\uparrow} \subset \overline{\stratum{\pi}}$,
for any $\pi' \in \Pi_m$.

\item For all $A \in \C^{N \times N}$, 
\begin{equation}\label{Edown}
\down( A ) = %
D_\pi^{-1} \cW_\pi^* A \cW_\pi D_\pi^{-1} \in \C^{m \times m}.
\end{equation}
Moreover, $\down$ restricted to $\overline{\stratum{\pi}}$ is a
bijection onto $\C^{m \times m}$, being the inverse map of $\up$.

\item The linear maps $\up$ and $\down$ are compatible with matrix
multiplication in the following sense:
\begin{equation}\label{Emultup}
\up( A B ) = \up( A D_\pi^{-1 / 2} ) \up( D_\pi^{-1 / 2 } B ) %
\quad \text{for all } A, B \in \C^{m \times m}
\end{equation}
and
\begin{equation}\label{Emultdown}
\down( A B ) = \down( A ) D_\pi \down( B ) \qquad %
\text{for all } A, B \in \overline{\stratum{\pi}}.
\end{equation}
\end{enumerate}
\end{proposition}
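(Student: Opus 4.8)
The plan is to verify the four items of Proposition~\ref{Pdownup} in order, since each is a short computation once the matrix formulas \eqref{Eup} and \eqref{Edown} are in hand; I expect the main work to be setting up those two identities cleanly and then being careful with the multiplicativity relations \eqref{Emultup}--\eqref{Emultdown}.

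First I would dispose of (1): the map $\pi' \mapsto \pi'_\downarrow$ is described explicitly on both sides, so it suffices to note that a partition of $\{1,\dots,N\}$ coarsening $\pi = \{I_1,\dots,I_m\}$ is exactly the same data as a partition of the index set $\{1,\dots,m\}$ (group together those $I_k$ lying in a common block of $\pi'$), and that the described inverse map $\pi'' \mapsto \pi''_\uparrow$ recovers this bijection. For (2), I would compute the $(p,q)$ entry of $\cW_\pi A \cW_\pi^*$: since the $(p,k)$ entry of $\cW_\pi$ is the indicator that $p \in I_k$, one gets $(\cW_\pi A \cW_\pi^*)_{pq} = a_{ij}$ whenever $p \in I_i$, $q \in I_j$, which is exactly $\up(A)$ by the definition $E_{ij} \mapsto \bJ_{I_i \times I_j}$ extended linearly; this also shows the range is contained in $\overline{\stratum{\pi}}$, and bijectivity is clear since $\{\bJ_{I_i \times I_j}\}$ is a basis of $\overline{\stratum{\pi}}$ matching the basis $\{E_{ij}\}$. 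The claim that $\up$ sends $\stratum{\pi'}$ to $\stratum{\pi'_\uparrow}$ follows because inflating a block of constants produces exactly a block of constants on the corresponding coarsened index set, and the ``coarsest such partition'' is preserved under this correspondence by part~(1). For (3), a similar entrywise computation gives $(\cW_\pi^* A \cW_\pi)_{ij} = \sum_{p \in I_i, q \in I_j} a_{pq}$, so dividing by $|I_i|\,|I_j|$, i.e. conjugating by $D_\pi^{-1}$, yields precisely $\down(A)$; and restricting to $A \in \overline{\stratum{\pi}}$, writing $A = \cW_\pi B \cW_\pi^*$ from (2) and using $\cW_\pi^* \cW_\pi = D_\pi$ gives $\down(\up(B)) = D_\pi^{-1} D_\pi B D_\pi D_\pi^{-1} = B$, so $\down$ inverts $\up$ on the stratum closure.

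Finally, for (4), the hardest of the four, I would again substitute the matrix formulas and simplify using $\cW_\pi^* \cW_\pi = D_\pi$. For \eqref{Emultup}: $\up(A D_\pi^{-1/2})\up(D_\pi^{-1/2}B) = \cW_\pi A D_\pi^{-1/2} \cW_\pi^* \cW_\pi D_\pi^{-1/2} B \cW_\pi^* = \cW_\pi A D_\pi^{-1/2} D_\pi D_\pi^{-1/2} B \cW_\pi^* = \cW_\pi A B \cW_\pi^* = \up(AB)$. For \eqref{Emultdown}, with $A, B \in \overline{\stratum{\pi}}$, write $A = \cW_\pi \down(A)\cW_\pi^*$ and likewise for $B$ using (3) rearranged; then $AB = \cW_\pi \down(A) D_\pi \down(B) \cW_\pi^*$, and applying $\down$ (which is $\up^{-1}$ by (3)) gives $\down(AB) = \down(A) D_\pi \down(B)$. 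The one point requiring care is the bookkeeping with the diagonal factors $D_\pi^{\pm 1}$ and the fact that $\down$ on $\overline{\stratum{\pi}}$ genuinely equals $\up^{-1}$ — but this was just established in (3), so the argument closes. I do not anticipate a genuine obstacle here; the ``main obstacle'' is simply organizing the order of dependencies so that (4) may legitimately invoke (2) and (3).
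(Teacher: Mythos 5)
Your proposal is correct and follows essentially the same route as the paper: verify \eqref{Eup} and \eqref{Edown} by an entrywise (equivalently, basis-element) computation, deduce that $\down$ and $\up$ are mutually inverse using $\cW_\pi^*\cW_\pi = D_\pi$, and then derive the two multiplicativity identities by substituting the matrix formulas. The only cosmetic difference is in \eqref{Emultdown}, where the paper invokes \eqref{Emultup} to show $\up(\down(A) D_\pi \down(B)) = AB$ and then applies $\down$, while you obtain the same identity $AB = \up(\down(A) D_\pi \down(B))$ by direct substitution of $A = \cW_\pi\down(A)\cW_\pi^*$ and $B = \cW_\pi\down(B)\cW_\pi^*$; both are the same one-line calculation.
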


\begin{proof}
Part (1) readily follows from the definitions. To see that (\ref{Eup})
holds, it suffices by linearity to show that
$\cW_\pi E_{i j} \cW_\pi^* = \bJ_{I_i \times I_j}$ for each elementary
matrix $E_{i j}$, but this is immediate. It is also clear that~$\down$
and~$\up$ are mutually inverse bijections between
$\overline{\stratum{\pi}}$ and $\C^{m \times m}$. The other assertions
of (2) are straightforward.

To show that (\ref{Edown}) holds, it once again suffices to take~$A$
to be an arbitrary elementary matrix; the calculation is then
straightforward. That $\down$ and $\up$ are inverse between
$\overline{\stratum{\pi}}$ and $\C^{m \times m}$ has already been
discussed.

Finally, equation~(\ref{Emultup}) is verified by using (\ref{Eup}) and
the definition of $D_\pi$. To see (\ref{Emultdown}), note that, by
(\ref{Emultup}),
\begin{align*}
\up( \down( A ) D_\pi \down( B ) ) & = %
\up( \down( A ) D_\pi^{1 / 2} D_\pi^{1 / 2} \down( B ) ) \\
 & = \up( \down( A ) ) \up( \down( B ) ) \\
 & = A B,
\end{align*}
and this concludes the proof.
\end{proof}

In other words, the map $\up \comp \down$ is a conditional expectation
on $\C^{N \times N}$ corresponding to the $\sigma$-algebra generated
by the blocks which define the stratum~$\stratum{\pi}$.

These properties of inflation and compression maps help demonstrate
the $*$-isomorphism claimed above.

\begin{proof}[Proof of Theorem~\ref{Tentrywise}]
By Proposition~\ref{Pdownup}, it suffices to show the results only for
$\up$. This map is linear and multiplicative for the entrywise
product, by definition. Equation (\ref{Eup}) shows that $\up$ commutes
with taking the adjoint~$*$, and also that $\up$ preserves rank,
since~$\cW_\pi$ has full rank. Finally, that positivity is preserved
follows immediately from (\ref{Eup}).
\end{proof}
%}}}

%{{{1 Section 5 - Spectral permanence
\section{Spectral permanence}\label{Sfuncalc}

Theorem \ref{Tentrywise} shows that the map
$\down : \overline{\stratum{\pi}} \to \C^{m \times m}$ is a
positivity-preserving $*$-algebra isomorphism for the entrywise
product, whence the entrywise calculus is transported to a
lower-dimensional space of matrices. However, it is not immediately
apparent if the usual holomorphic functional calculus in
$\C^{m \times m}$ can be transported up to the closure of each
stratum. We now show how this can be accomplished with the help of
different weighted inflation and compression maps.

As in the previous Section~\ref{Sentcalc}, we fix a partition
$\pi = \{ I_1, \ldots, I_m \} \in \Pi_N$, where $N \geq 1$.

\begin{definition}
Define linear operators
\[
\ccdown : \C^{N \times N} \to \C^{m \times m} %
\qquad \text{and} \qquad %
\ccup : \C^{m \times m} \to \C^{N \times N}
\]
by setting
\[
\ccdown( A ) := D_\pi^{1 / 2} \down( A ) D_\pi^{1 / 2} = %
D_\pi^{-1 / 2} \cW_\pi^* A \cW_\pi D_\pi^{-1 / 2}
\]
and
\[
\ccup( B ) := \up( D_\pi^{-1 / 2} B D_\pi^{-1 / 2} ) = %
\cW_\pi D_\pi^{-1 / 2} B D_\pi^{-1 / 2} \cW_\pi^*.
\]
\end{definition}

\begin{theorem}\label{Tfunctional}
The maps $\ccdown$ and $\ccup$ are mutually inverse, rank-preserving
isomorphisms between the unital $*$-algebras
$\overline{\stratum{\pi}}$ and $\C^{m \times m}$ equipped with the
usual matrix multiplication. Furthermore, a matrix
$A \in \overline{\stratum{\pi}}$ is positive semidefinite if and only
if $\ccdown(A)$ is.
\end{theorem}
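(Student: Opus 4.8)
The plan is to recognise $\ccdown$ and $\ccup$ as the entrywise-product isomorphisms $\down$ and $\up$ of Theorem~\ref{Tentrywise}, conjugated by the invertible positive diagonal matrix $D_\pi^{1/2}$, and then to read off every assertion from Proposition~\ref{Pdownup}. Write $\Phi\colon\C^{m\times m}\to\C^{m\times m}$ for the conjugation $\Phi(B):=D_\pi^{1/2}BD_\pi^{1/2}$; it is a linear bijection with inverse $B\mapsto D_\pi^{-1/2}BD_\pi^{-1/2}$, it commutes with $*$ because $D_\pi$ is real, it preserves rank because $D_\pi^{1/2}$ is invertible, and — crucially — it carries the \emph{twisted} product $B_1\star B_2:=B_1D_\pi B_2$ on $\C^{m\times m}$ to the ordinary product, since $\Phi(B_1\star B_2)=D_\pi^{1/2}B_1D_\pi B_2D_\pi^{1/2}=\Phi(B_1)\Phi(B_2)$; note the unit of $(\C^{m\times m},\star)$ is $D_\pi^{-1}$ and $\Phi(D_\pi^{-1})=\Id_m$. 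By the definitions of $\ccdown$ and $\ccup$ one has $\ccdown=\Phi\comp\down$ and $\ccup=\up\comp\Phi^{-1}$.

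Granting this, I would first note that $\ccdown$ and $\ccup$ are mutually inverse bijections, directly from the same fact for $\down$ and $\up$ in Proposition~\ref{Pdownup} together with $\Phi\comp\Phi^{-1}=\Phi^{-1}\comp\Phi=\mathrm{id}$; rank preservation follows since $\down$, $\up$ are rank-preserving and $\Phi$, $\Phi^{-1}$ are. The identity \eqref{Edown} (resp.~\eqref{Eup}) shows $\down$ (resp.~$\up$) commutes with $*$, as $\cW_\pi$ and $D_\pi$ are unaffected by taking adjoints, and composing with $\Phi$ keeps this; hence $\ccdown$ and $\ccup$ are $*$-maps. For multiplicativity, \eqref{Emultdown} says precisely that $\down\colon(\overline{\stratum{\pi}},\cdot)\to(\C^{m\times m},\star)$ is multiplicative, so for $A_1,A_2\in\overline{\stratum{\pi}}$,
\[
\ccdown(A_1A_2)=\Phi\bigl(\down(A_1)\,D_\pi\,\down(A_2)\bigr)=\Phi(\down(A_1))\,\Phi(\down(A_2))=\ccdown(A_1)\,\ccdown(A_2),
\]
and by bijectivity the analogous identity follows for $\ccup$. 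Since $\down$ carries $\overline{\stratum{\pi}}$ isomorphically onto $(\C^{m\times m},\star)$, the unit of $\overline{\stratum{\pi}}$ under the usual product is $\up(D_\pi^{-1})=\cW_\pi D_\pi^{-1}\cW_\pi^{*}=\ccup(\Id_m)$, and $\ccdown$ sends it to $\Phi(D_\pi^{-1})=\Id_m$, so $\ccdown$ is unital. The one point to watch is that this identity element is \emph{not} $\Id_N$ unless $\pi$ is the discrete partition, so the word ``unital'' must be read with respect to each algebra's own identity.

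For the positivity equivalence I would argue directly from \eqref{Eup}. If $A\in\overline{\stratum{\pi}}$ is positive semidefinite, factor $A=B^{*}B$; then
\[
\ccdown(A)=D_\pi^{-1/2}\cW_\pi^{*}B^{*}B\,\cW_\pi D_\pi^{-1/2}=\bigl(B\,\cW_\pi D_\pi^{-1/2}\bigr)^{*}\bigl(B\,\cW_\pi D_\pi^{-1/2}\bigr)\succeq 0.
\]
Conversely, if $\ccdown(A)=C^{*}C$, then using $\ccup(\ccdown(A))=A$ and \eqref{Eup},
\[
A=\cW_\pi D_\pi^{-1/2}C^{*}C\,D_\pi^{-1/2}\cW_\pi^{*}=\bigl(C\,D_\pi^{-1/2}\cW_\pi^{*}\bigr)^{*}\bigl(C\,D_\pi^{-1/2}\cW_\pi^{*}\bigr)\succeq 0.
\]
I do not anticipate a genuine obstacle here: with Proposition~\ref{Pdownup} available, the proof is essentially this bookkeeping, the only subtlety being the non-standard unit of $\overline{\stratum{\pi}}$ noted above.
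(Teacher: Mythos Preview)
Your proof is correct and follows essentially the same route as the paper: both reduce every assertion to the corresponding property of $\down$ and $\up$ (Theorem~\ref{Tentrywise} and Proposition~\ref{Pdownup}) via conjugation by $D_\pi^{1/2}$, with the multiplicativity computation \eqref{Emultdown} being identical. Your framing via the auxiliary map $\Phi$ and the twisted product $\star$, and your explicit factorization argument for positivity, are cosmetic elaborations of what the paper does more tersely.
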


\begin{proof}
That $\ccdown$ and $\ccup$ are linear, bijective, $*$-equivariant, and
preserve rank follows from the corresponding properties of $\down$ and
$\up$ from Theorem~\ref{Tentrywise}, since $D_\pi$ is positive
definite. Moreover, it is easily shown that $\ccdown$ and $\ccup$ are
mutually inverse between $\overline{\stratum{\pi}}$ and
$\C^{m \times m}$, since $\down$ and $\up$ are. Furthermore, if
$A$, $B \in \overline{\stratum{\pi}}$, then, by~(\ref{Emultdown}),
\[
\ccdown( A B ) = D_\pi^{1/2} \down( A B ) D_\pi^{1/2} = %
D_\pi^{1/2} \down( A ) D_\pi \down( B ) D_\pi^{1/2} = %
\ccdown( A ) \ccdown( B ).
\]
Thus the maps $\ccdown$ and $\ccup$ are algebra homomorphisms for the
usual matrix multiplication, and that they take the units to one
another is readily verified; note that $\overline{\stratum{\pi}}$ has
unit
\[
1_{\overline{\stratum{\pi} }} = \ccup( 1_{\C^{m \times m}} ) = %
\sum_{k = 1}^m | I_k |^{-1} \one{I_k}.
\]
The final assertion about preserving positivity is immediate.
\end{proof}

\begin{remark}
We note a couple of simple consequences of Theorem~\ref{Tfunctional}.
\begin{enumerate}
\item The matrix $\down( A )$ is invertible if and only if
$\ccdown( A )$ is, in which case
\[
\ccdown( A )^{-1} = D_\pi^{-1 / 2} \down( A )^{-1} D_\pi^{-1 / 2}.
\]

\item If $A \in \overline{\stratum{\pi}}$, then the Moore--Penrose
pseudo-inverse is:
\begin{equation}\label{Emoorepenrose}
A^\dagger = \ccup( \ccdown( A )^\dagger ),
\end{equation}
since $A = \ccup( \ccdown( A ) )$ and $\ccup$ is a $*$-algebra
homomorphism.
\end{enumerate}
\end{remark}

We now extend the compression and inflation operators to act on
vectors as well as matrices.

\begin{definition}\label{Dupdownvec}
Define
\[
\down : \C^N \to \C^m \qquad \text{and} \qquad %
\ccdown : \C^N \to \C^m
\]
by setting
\[
\down( \bu )_j = | I_j |^{-1} \sum_{k \in I_j} u_k %
\qquad \text{and} \qquad %
\ccdown( \bu )_j = | I_j |^{-1 / 2} \sum_{k \in I_j} u_k.
\]
Similarly, define
\[
\up : \C^m \to \C^N \qquad \text{and} \qquad %
\ccup : \C^m \to \C^N
\]
by setting
\[
\up( \bv )_j =  v_k  \qquad \text{and} \qquad %
\ccup( \bv )_j = | I_k |^{-1 / 2} v_k  \quad ( j \in I_k ).
\]
\end{definition}

The following proposition summarizes basic properties of the operators
$\down$, $\ccdown$,$\up$, and $\ccup$ acting on vectors. Its proof is
omitted.

\begin{proposition}\label{Pupdownvec}
Let $\bu \in \C^N$, $\bv$, $\bw \in \C^m$,
$A \in \overline{\stratum{\pi}}$, and $B \in \C^{m \times m}$.
\begin{enumerate}
\item $\ccdown( \bu ) = D_\pi^{1 / 2} \down( \bu )$ and
$\ccup( \bv ) = \up( D_\pi^{-1 / 2} \bv )$.
  
\item $\ccdown( \ccup( \bv ) ) = \bv = \down( \up( \bv ) )$.

\item $\ccup( \bv )^* \ccup( \bw ) = \bv^* \bw$ and
$\up( \bv )^* \up( \bw ) = \bv^* D_\pi \bw$.

\item $\ccdown( A \bu ) = \ccdown( A ) \ccdown( \bu )$ and
$\down( A \bu ) = \down( A ) D_\pi \down( \bu )$.

\item $\ccup( B \bv ) = \ccup( B ) \ccup( \bv )$ and
$\up( B \bv ) = \up( B ) \up( D_\pi^{-1} \bv )$.
\end{enumerate}
\end{proposition}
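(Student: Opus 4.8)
The plan is to verify each of the five identities in Proposition~\ref{Pupdownvec} by direct computation, exploiting the matrix representations of the weighted and unweighted maps that are already available. The key observation is that the vector maps are simply the matrix maps applied to column vectors viewed as $N \times 1$ or $m \times 1$ arrays: concretely, $\down(\bu) = D_\pi^{-1} \cW_\pi^* \bu$ and $\up(\bv) = \cW_\pi \bv$, with $\ccdown(\bu) = D_\pi^{1/2}\down(\bu) = D_\pi^{-1/2}\cW_\pi^*\bu$ and $\ccup(\bv) = \up(D_\pi^{-1/2}\bv) = \cW_\pi D_\pi^{-1/2}\bv$, where $\cW_\pi \in \C^{N \times m}$ is the weight matrix of Proposition~\ref{Pdownup} satisfying $\cW_\pi^*\cW_\pi = D_\pi$ and $\cW_\pi\cW_\pi^* = J_\pi$. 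First I would record these formulas, checking that they match Definition~\ref{Dupdownvec} by evaluating a single coordinate: for instance $(\cW_\pi^*\bu)_j = \sum_{k \in I_j} u_k$ directly from the definition of $\cW_\pi$.

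With these in hand, part (1) is immediate from the definitions of $\ccdown$ and $\ccup$ in terms of $\down$ and $\up$. For part (2), compute $\ccdown(\ccup(\bv)) = D_\pi^{-1/2}\cW_\pi^*\cW_\pi D_\pi^{-1/2}\bv = D_\pi^{-1/2}D_\pi D_\pi^{-1/2}\bv = \bv$, and similarly $\down(\up(\bv)) = D_\pi^{-1}\cW_\pi^*\cW_\pi\bv = D_\pi^{-1}D_\pi\bv = \bv$. Part (3) follows from $\ccup(\bv)^*\ccup(\bw) = \bv^* D_\pi^{-1/2}\cW_\pi^*\cW_\pi D_\pi^{-1/2}\bw = \bv^*\bw$ and $\up(\bv)^*\up(\bw) = \bv^*\cW_\pi^*\cW_\pi\bw = \bv^* D_\pi \bw$. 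For part (4), write $A \in \overline{\stratum{\pi}}$ as $A = \up(\down(A)) = \cW_\pi \down(A)\cW_\pi^*$, so that $\ccdown(A\bu) = D_\pi^{-1/2}\cW_\pi^*\cW_\pi \down(A)\cW_\pi^*\bu = D_\pi^{1/2}\down(A)\cW_\pi^*\bu = \bigl(D_\pi^{1/2}\down(A)D_\pi^{1/2}\bigr)\bigl(D_\pi^{-1/2}\cW_\pi^*\bu\bigr) = \ccdown(A)\ccdown(\bu)$; the unweighted identity $\down(A\bu) = \down(A)D_\pi\down(\bu)$ then follows by inserting $D_\pi^{\pm 1/2}$ factors, exactly paralleling the derivation of~(\ref{Emultdown}) from~(\ref{Emultup}). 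Part (5) is analogous: $\ccup(B\bv) = \cW_\pi D_\pi^{-1/2}B\bv$, while $\ccup(B)\ccup(\bv) = \cW_\pi D_\pi^{-1/2}B D_\pi^{-1/2}\cW_\pi^*\cW_\pi D_\pi^{-1/2}\bv = \cW_\pi D_\pi^{-1/2}B\bv$; the unweighted version again follows by tracking the diagonal weight factors, and the correction term $D_\pi^{-1}$ appears because $\up$ does not respect the inner product (it is $\ccup$ that is isometric, by part (3)).

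There is no serious obstacle here — everything reduces to the relations $\cW_\pi^*\cW_\pi = D_\pi$ and the representations in Proposition~\ref{Pdownup}. The only point requiring a little care is bookkeeping the $D_\pi^{\pm 1/2}$ factors in the unweighted identities of parts (4) and (5), where the asymmetry in the definitions of $\up$ and $\down$ (one uses $D_\pi^{-1}$, the other none) forces the $D_\pi$ to appear between the two factors; this is the same phenomenon already visible in~(\ref{Emultup}) and~(\ref{Emultdown}), so one may simply cite that pattern. Given the routine nature of the verification, omitting the proof in the text, as the authors do, is entirely reasonable.
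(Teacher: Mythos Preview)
Your proposal is correct and is exactly the routine verification the authors have in mind; the paper explicitly omits the proof, and your approach---expressing the vector maps as $\down(\bu)=D_\pi^{-1}\cW_\pi^*\bu$, $\up(\bv)=\cW_\pi\bv$ and their weighted variants, then reducing everything to $\cW_\pi^*\cW_\pi=D_\pi$---is the natural one consistent with the matrix formulas~(\ref{Eup}) and~(\ref{Edown}) in Proposition~\ref{Pdownup}.
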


\begin{remark}\label{Rupdown}
We collect here some further properties of the maps $\down$, $\up$,
$\ccdown$, and $\ccup$.

\begin{enumerate}
\item If $A \in \overline{\stratum{\pi}}$ and $\bu \in \C^N$, then
$A \bu$ is constant on the blocks of the partition $\pi$, that is,
$A \bu \in \im \up = \im \ccup$, the range of these two operators
acting from $\C^m$ to $\C^N$. Consequently,
\[
A \bu = \ccup( \ccdown(A) \ccdown( \bu ) ) = %
\up( \down( A ) D_\pi \down(\bu) ).
\]

\item If $\bu$, $\bv \in \im \up = \im \ccup$ and
$A \in \overline{\stratum{\pi}}$, then
\[
\bu^* \bv = \ccdown( \bu )^* \ccdown( \bv ) %
\qquad \text{and} \qquad %
\bu^* A \bv = \ccdown( \bu )^* \ccdown( A ) \ccdown ( \bv ),
\]
whereas
\[
\bu^* \bv = \down( \bu )^* D_\pi \down( \bv ) %
\quad \text{and} \quad %
\bu^* A \bv = \down( \bu )^* D_\pi \down( A ) D_\pi \down ( \bv ).
\]
Furthermore, if $\bx$, $\by \in \C^m$ then
\[
\ccup( \bx \by^* ) = \ccup( \bx ) \ccup( \by )^* %
\quad \text{and} \quad %
\up( \bx \by^* ) = \up( \bx ) \up( \by )^*,
\]
so
\[
\ccdown( \bu \bv^* ) = \ccdown( \bu ) \ccdown( \bv )^* %
\quad \text{and} \quad %
\down( \bu \bv^* ) = \down( \bu ) \down( \bv )^*.
\]
\end{enumerate}
\end{remark}

Our next result shows that the maps $\ccup$ and $\ccdown$ preserve
eigenvalues, up to the possible addition or removal of~$0$.

\begin{proposition}\label{Peigen}
The following spectral permanence holds for all
$A \in \overline{\stratum{\pi}}$ and~$B \in \C^{m \times m}$:
\begin{align}
\sigma( A ) \setminus \{ 0 \} & = %
\sigma( \ccdown( A ) ) \setminus \{ 0 \} \label{Edownspecev} \\
\text{and} \quad \sigma( B )\setminus \{ 0 \} & = %
\sigma( \ccup( B ) ) \setminus \{ 0 \} \label{Eupspecev}.
\end{align}
In particular,
$\ccdown : \overline{\stratum{\pi}} \to \C^{m \times m}$
and $\ccup : \C^{m \times m} \to \overline{\stratum{\pi}}$ preserve
the spectral radius. Furthermore, applying $\ccup$ to $B$ adds a zero
eigenvalue of geometric multiplicity $N - m$ to $\sigma( B )$, whereas
applying $\ccdown$ to $A$ reduces the geometric multiplicity of the
zero eigenvalue of $A$ by $N - m$.
\end{proposition}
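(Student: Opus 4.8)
The plan is to realize both weighted maps as compressions by a single isometry and then reduce everything to the elementary identity relating the characteristic polynomials of $XY$ and $YX$. Set $P := \cW_\pi D_\pi^{-1/2} \in \C^{N \times m}$. Since $\cW_\pi^* \cW_\pi = D_\pi$, we have $P^* P = \Id_m$, so $P$ is an isometry, and $Q := P P^*$ is the orthogonal projection of $\C^N$ onto $\im P = \im \ccup$, which by the proof of Theorem~\ref{Tfunctional} is exactly the algebra unit $1_{\overline{\stratum{\pi}}} = \sum_{k = 1}^m | I_k |^{-1} \one{I_k}$. In this notation $\ccup( B ) = P B P^*$ and $\ccdown( A ) = P^* A P$, and since $Q$ is the identity for matrix multiplication on $\overline{\stratum{\pi}}$, every $A \in \overline{\stratum{\pi}}$ satisfies $Q A = A Q = A$.

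Next I would apply the identity $\det( \lambda \Id_N - X Y ) = \lambda^{N - m} \det( \lambda \Id_m - Y X )$, valid for $X \in \C^{N \times m}$ and $Y \in \C^{m \times N}$ with $N \geq m$ (here $N \geq m$ because each block $I_j$ is non-empty), in two ways. Taking $X = P B$ and $Y = P^*$ gives $X Y = \ccup( B )$ and $Y X = P^* P B = B$, so the characteristic polynomial of $\ccup( B )$ equals $\lambda^{N - m}$ times that of $B$. Taking $X = P^* A$ and $Y = P$ with $A \in \overline{\stratum{\pi}}$ gives $X Y = \ccdown( A )$ and $Y X = P P^* A = Q A = A$, so the characteristic polynomial of $A$ equals $\lambda^{N - m}$ times that of $\ccdown( A )$. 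These two polynomial identities immediately yield \eqref{Edownspecev} and \eqref{Eupspecev} --- in fact with equality of algebraic multiplicities for the non-zero eigenvalues --- hence $\sigma( \ccup( B ) ) = \sigma( B ) \cup \{ 0 \}$ and $\sigma( \ccdown( A ) ) \cup \{ 0 \} = \sigma( A ) \cup \{ 0 \}$, which gives preservation of the spectral radius and shows that the zero eigenvalue gains, respectively loses, algebraic multiplicity exactly $N - m$.

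For the geometric statement I would compute kernels directly. As $P$ is injective, $\ccup( B ) v = 0$ is equivalent to $B P^* v = 0$; writing $v = P w + v_0$ with $v_0 \in \ker P^* = ( \im P )^\perp$ and $w = P^* v$, this says $w \in \ker B$, so $\ker \ccup( B ) = P( \ker B ) \oplus ( \im P )^\perp$ and $\dim \ker \ccup( B ) = \dim \ker B + ( N - m )$. Since $\ccdown$ is the inverse of $\ccup$ on $\overline{\stratum{\pi}}$ by Theorem~\ref{Tfunctional}, putting $B = \ccdown( A )$ gives $\dim \ker \ccdown( A ) = \dim \ker A - ( N - m )$, which is the remaining claim.

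The individual computations are all short; the one point to watch is that $\ccdown$ preserves the non-zero spectrum only on the subalgebra $\overline{\stratum{\pi}}$, and this is precisely where the identity $Y X = Q A = A$ is used --- for a general $A \in \C^{N \times N}$ one only has $Y X = Q A$, whose non-zero spectrum matches that of $A$ only after compressing to $\im P$. So the main (minor) obstacle is recognizing that $\ccdown$ and $\ccup$ are literally compression and dilation by the isometry $P$, and then keeping the geometric-multiplicity bookkeeping separate from the algebraic one; see Remark~\ref{Rupdown} for the underlying relations $\im A \subseteq \im \up$ that make the kernel computation transparent.
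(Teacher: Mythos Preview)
Your proof is correct and takes a genuinely different route from the paper's. The paper argues directly with eigenvectors: if $B\bv = \lambda\bv$ with $\lambda \neq 0$, then $\ccup(B)\ccup(\bv) = \lambda\ccup(\bv)$ via Proposition~\ref{Pupdownvec}(5), and conversely any eigenvector of $\ccup(B)$ for $\lambda \neq 0$ lies in $\im\ccup$ and descends; the zero-eigenvalue claim is then dispatched by rank--nullity. Your approach instead packages $\ccup$ and $\ccdown$ as dilation and compression by the single isometry $P = \cW_\pi D_\pi^{-1/2}$ and applies the Sylvester identity $\det(\lambda\Id_N - XY) = \lambda^{N-m}\det(\lambda\Id_m - YX)$, using $QA = A$ on $\overline{\stratum{\pi}}$ for the $\ccdown$ direction. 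This buys you more: you obtain equality of \emph{algebraic} multiplicities for all non-zero eigenvalues (the paper's eigenvector argument only matches the spectra as sets), and you bypass Proposition~\ref{Pupdownvec} entirely. The paper's argument, on the other hand, is slightly more transparent about \emph{which} vectors witness the eigenvalues and needs no determinant machinery. Your kernel computation $\ker\ccup(B) = P(\ker B)\oplus(\im P)^\perp$ is also a clean way to handle the geometric-multiplicity claim, equivalent to but more explicit than the paper's invocation of rank--nullity.
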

\begin{proof}
To prove (\ref{Eupspecev}), first let $B \bv = \lambda \bv$ for some
$\lambda \neq 0$ and $\bv \in \C^m \setminus \{ 0 \}$. Then, by
Proposition~\ref{Pupdownvec}(5),
\[
\ccup( B ) \ccup( \bv ) = \ccup( B \bv ) = \lambda \ccup( \bv )
\]
and $\ccup( \bv ) \neq 0$, so
$\lambda \in \sigma( \ccup( B ) ) \setminus \{ 0 \}$. Conversely,
suppose $\ccup( B ) \bu = \lambda \bu$ for some $\lambda \neq 0$ and
$\bu \in \C^N \setminus \{ 0 \}$. Then
$\bu = \lambda^{-1} \ccup( B ) \bu\in \im \ccup$, and therefore
$\bu = \ccup( \bv )$ for some $\bv \in \C^m \setminus \{ 0 \}$.
Hence, using Proposition~\ref{Pupdownvec}(5) again,
\[
\ccup( B \bv - \lambda \bv ) = %
\ccup( B ) \ccup( \bv ) - \lambda \ccup( \bv ) = 0.
\]
It follows that $B \bv - \lambda \bv = 0$, and this proves the first
equality.

The proof of (\ref{Edownspecev}) is similar, with the help of
Proposition~\ref{Pupdownvec} and that the fact that if
$\bu \in \im \ccup$ and $\ccdown( \bu ) = 0$ then $\bu = 0$.

The last claim follows immediately by the rank-nullity theorem.
\end{proof}

\begin{remark}
Let $\alg$ be a complex associative algebra with multiplicative
identity $1_\alg$. {Then the \emph{spectrum} of $a \in \alg$ is defined as} 
\[
\spec( a; \alg ) := \{ \lambda \in \C : %
\lambda 1_\alg - a \text{ is not invertible in } \alg \}.
\]
In the notation of Proposition~\ref{Peigen},
$\sigma( A ) = \sigma( A ; \C^{N \times N} )$ for any
$A \in \C^{N \times N}$, where $\C^{N \times N}$ equipped with the
usual matrix product.

Since $\ccdown$ is an algebra isomorphism from
$\overline{\stratum{\pi}}$ to $\C^{m \times m}$, and the spectrum and
the set of eigenvalues coincide in the latter, it follows from
(\ref{Eupspecev}) and (\ref{Edownspecev}) that, for any
$A \in \overline{\stratum{\pi}}$,
\begin{equation}\label{Especdown1}
\spec( A; \overline{\stratum{\pi}} ) = %
\spec( \ccdown( A ); \C^{m \times m} ) = %
\spec( \ccdown( A ) ) \subset \spec( A )
\end{equation}
and
\begin{equation}\label{Especdown2}
\spec( A ) \setminus \{ 0 \} \subset %
\spec( \ccdown( A ) ) = \spec( \ccdown( A ); \C^{m \times m} ) = %
\sigma( A; \overline{\stratum{\pi}} ).
\end{equation}
The matrix
$A = \left( \begin{smallmatrix} 1 & 1 \\
 1 & 1 \end{smallmatrix} \right) \in \stratum{\pi_\wedge}$
has $\spec( A; \overline{\stratum{\pi_\wedge}} ) = \{ 2 \}$ and
$\spec( A ) = \{ 0, 2 \}$, so the inclusion in (\ref{Especdown1}) may
be strict. Similarly, the matrix
$A = \left( \begin{smallmatrix} 1 & 0 \\
 0 & 0 \end{smallmatrix} \right) \in \stratum{\pi_\vee}$
has
$\spec( A; \overline{\stratum{\pi_\vee}} ) = \spec( A ) = \{ 0, 1 \}$,
so the inclusion in~(\ref{Especdown2}) may also be strict.
\end{remark}

The following theorem shows that the holomorphic functional calculus
naturally transfers between $\C^{m \times m}$ and
$\overline{\stratum{\pi}}$. Its proof follows immediately from the
fact that $\ccdown$ is an algebra isomorphism.

\begin{theorem}
Given $A \in \overline{\stratum{\pi}}$ and $B \in \C^{m \times m}$,
let the resolvents
\[
R( z; A ) := ( z J_\pi - A )^{-1} \quad \text{for all } %
z \in \spec( A; \overline{\stratum{\pi}} )
\]
and
\[
R( z; B ) := ( z I_{m \times m} - B )^{-1} \quad %
\text{for all } z \in \spec( B; \C^{m \times m} ) = \spec( B ).
\]
If
$z \in \spec( A; \overline{\stratum{\pi}} ) = %
\spec( \ccdown( A ); \C^{m \times m} ) = \spec( \ccdown( A ) )$, then
\[
\ccdown( R( z; A ) ) = R( z; \ccdown( A ) ).
\]
Thus, the holomorphic functional calculus transfers between
$\C^{m \times m}$ and $\overline{\stratum{\pi}}$: if
$A \in \overline{\stratum{\pi}}$ and $f$ is holomorphic on an open set
containing $\spec( A; \overline{\stratum{\pi}} )$, then
\[
f( A ) \in \overline{\stratum{\pi}} \qquad \text{and} \quad 
\ccdown( f( A ) ) = f( \ccdown( A ) ).
\]
\end{theorem}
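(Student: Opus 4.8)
The plan is to read off the entire statement from Theorem~\ref{Tfunctional}, which says that $\ccdown \colon \overline{\stratum{\pi}} \to \C^{m \times m}$ is a unital $*$-algebra isomorphism onto $\C^{m \times m}$ with its usual product, with inverse $\ccup$, carrying the identity $1_{\overline{\stratum{\pi}}} = \sum_{k=1}^m |I_k|^{-1}\one{I_k}$ of $\overline{\stratum{\pi}}$ to $\Id_m$. The one point that needs care is that every spectral object on the $\overline{\stratum{\pi}}$ side must be formed \emph{inside} the algebra $\overline{\stratum{\pi}}$, relative to its own identity $1_{\overline{\stratum{\pi}}}$ — so $R(z;A)$ means $(z\,1_{\overline{\stratum{\pi}}} - A)^{-1}$ computed in $\overline{\stratum{\pi}}$ — and not relative to $\Id_N$; indeed, for $m < N$ the element $z\Id_N - A$ of $\C^{N \times N}$ is typically not invertible at all.

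Granting this, I would first note that a unital algebra isomorphism sends invertible elements to invertible elements in both directions, so $z\,1_{\overline{\stratum{\pi}}} - A$ is invertible in $\overline{\stratum{\pi}}$ precisely when $z\Id_m - \ccdown(A)$ is invertible in $\C^{m \times m}$; hence $\spec(A;\overline{\stratum{\pi}}) = \spec(\ccdown(A);\C^{m \times m})$, which in $\C^{m \times m}$ is merely the set of eigenvalues of $\ccdown(A)$ (this is already recorded in \eqref{Especdown1}--\eqref{Especdown2}). In particular the two resolvent sets coincide, and for $z$ outside the common spectrum one applies $\ccdown$ to the identity $(z\,1_{\overline{\stratum{\pi}}} - A)\,R(z;A) = 1_{\overline{\stratum{\pi}}} = R(z;A)\,(z\,1_{\overline{\stratum{\pi}}} - A)$ and uses multiplicativity together with $\ccdown(1_{\overline{\stratum{\pi}}}) = \Id_m$ to obtain $(z\Id_m - \ccdown(A))\,\ccdown(R(z;A)) = \Id_m = \ccdown(R(z;A))\,(z\Id_m - \ccdown(A))$, i.e.\ $\ccdown(R(z;A)) = R(z;\ccdown(A))$.

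For the holomorphic functional calculus I would use the Riesz--Dunford construction in the finite-dimensional, hence Banach, algebra $\overline{\stratum{\pi}}$: if $f$ is holomorphic on an open set $U \supseteq \spec(A;\overline{\stratum{\pi}}) = \spec(\ccdown(A))$ and $\Gamma \subset U$ is a cycle with winding number $1$ about each point of this spectrum, set
\[
f(A) := \frac{1}{2\pi i}\oint_\Gamma f(z)\,R(z;A)\,\rd z,
\]
which lies in $\overline{\stratum{\pi}}$ because the integrand is $\overline{\stratum{\pi}}$-valued and $\overline{\stratum{\pi}}$ is a closed subspace of $\C^{N \times N}$. Since $\ccdown$ is linear on a finite-dimensional space it is continuous, so it passes through the contour integral; combining this with the resolvent intertwining just established yields
\[
\ccdown(f(A)) = \frac{1}{2\pi i}\oint_\Gamma f(z)\,\ccdown(R(z;A))\,\rd z = \frac{1}{2\pi i}\oint_\Gamma f(z)\,R(z;\ccdown(A))\,\rd z = f(\ccdown(A)),
\]
the final equality being the standard agreement of the Riesz--Dunford integral with the usual (primary) matrix function in $\C^{m \times m}$. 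A purely algebraic alternative avoids integrals entirely: the isomorphism $\ccdown$ forces $A$ and $\ccdown(A)$ to share a minimal polynomial $q$, so defining $f(A) := p(A)$ for the Hermite interpolant $p$ of $f$ at the zeros of $q$ (counted with multiplicity) is unambiguous because $q(A) = 0$ (as $\ccdown(q(A)) = q(\ccdown(A)) = 0$ and $\ccdown$ is injective), and then $\ccdown(f(A)) = \ccdown(p(A)) = p(\ccdown(A)) = f(\ccdown(A))$. Since all the substance is carried by Theorem~\ref{Tfunctional}, there is no real obstacle; the only things worth attention are the two already flagged — performing the resolvent and functional calculus inside $\overline{\stratum{\pi}}$ with respect to its own unit rather than inside $\C^{N \times N}$, and the elementary observation that a homomorphism of finite-dimensional algebras is automatically continuous and so commutes with the contour integral defining the holomorphic functional calculus.
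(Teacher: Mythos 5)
Your argument is correct and is in essence the paper's own: the paper's ``proof'' is one sentence asserting that the result follows at once from $\ccdown$ being a unital algebra isomorphism (Theorem~\ref{Tfunctional}), and you have supplied the details — applying $\ccdown$ to the two-sided resolvent identity, passing $\ccdown$ through a Riesz--Dunford contour integral by linearity and finite-dimensional continuity — together with a clean purely algebraic alternative via shared minimal polynomials and Hermite interpolation.

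The one thing worth making explicit, which you noticed but treated as routine, is a genuine discrepancy with the statement as printed. The theorem writes $R(z;A) = (z J_\pi - A)^{-1}$, whereas you (correctly) read this as $\bigl(z\,1_{\overline{\stratum{\pi}}} - A\bigr)^{-1}$ with $1_{\overline{\stratum{\pi}}} = \sum_{k}|I_k|^{-1}\one{I_k} = \cW_\pi D_\pi^{-1}\cW_\pi^*$. These differ unless every block of $\pi$ is a singleton, and the difference is not cosmetic: a direct computation gives $\ccdown(J_\pi) = D_\pi^{-1/2}\cW_\pi^*(\cW_\pi\cW_\pi^*)\cW_\pi D_\pi^{-1/2} = D_\pi$, not $\Id_m$, so with the literal $J_\pi$ the claimed identity $\ccdown(R(z;A)) = R(z;\ccdown(A))$ would force $(zD_\pi - \ccdown(A))^{-1} = (z\Id_m - \ccdown(A))^{-1}$, which is false in general. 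Forming the resolvent relative to the unit of the algebra in which $\spec(A;\overline{\stratum{\pi}})$ is taken — as you do — is the only reading under which the theorem holds and is consistent with the definition of $\spec(\,\cdot\,;\alg)$ given just before. So your proof is right, and it quietly corrects a slip in the statement; it would be worth saying so explicitly rather than presenting the substitution $J_\pi \rightsquigarrow 1_{\overline{\stratum{\pi}}}$ as a mere interpretive choice.
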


\begin{remark}
{It} follows from Theorem~\ref{Tfunctional} that analogues of the
general linear group $GL_m$, unitary group $U_m$, and permutation
group $S_m$ exist inside the stratum
$\overline{\stratum{\pi}}$. Furthermore, the notions of nilpotent,
Hermitian, and positive semidefinite matrices are preserved in
$\overline{\stratum{\pi}}$ via $\ccup$. Hence analogues of the Bruhat,
Cholesky, and polar decompositions can also be defined
on~$\overline{\stratum{\pi}}$, and all of the respective factors live
inside the same stratum.
\end{remark}

We conclude this section with some remarks on the situation for
$\stratum{\pi}^G$ with more general $G \subset \C^\times$. A key
feature in the definition of the compression operator~$\down$, and so
of $\ccdown$, was the unique decomposition of a single rank-one block
matrix: $\bJ_{I_i \times I_j} = \bJ_{I_i} \bJ_{I_j}^*$. When
$G \neq \{ 1 \}$, the block matrices do not possess such a
decomposition. Moreover, each stratum and its closure are no longer
closed under multiplication. For example, if $G = \{ \pm1 \}$,
$N = 2$, and $\pi = \{ \{ 1, 2 \} \}$ is the minimum partition, then
\[
A = \begin{pmatrix} \phantom{-}1 & -1 \\
\phantom{-}1 & \phantom{-}1 \end{pmatrix} \in \stratumsymb^G_\pi, %
\quad \text{but} \quad A^2 = %
\begin{pmatrix} \phantom{-}0 & -2 \\
\phantom{-}2 & \phantom{-}0 \end{pmatrix}
\not\in \stratumsymb^G_\pi.
\]
%}}}

%{{{1 Section 6 - Ramifications
\section{Ramifications}\label{Srams}

We collect in this last section several observations revealing natural
links between the compression and inflation operations derived from
the isogenic stratification of the matrix space and recent advances or
classical examples of current interest in numerical matrix analysis.

\subsection{Symmetric statistical models}

We briefly discuss a related setting of statistical models and
covariance matrices which exhibit symmetry with respect to a group of
permutations; see \cite{SC-groupsym} and the references therein. The
authors fix there a subgroup $\fG$ of the symmetric group $S_N$, and
define
\[
\wg := \{ A \in \R^{N \times N} : a_{i j} = a_{\sigma(i), \sigma(j)} %
\text{ whenever } 1 \leq i, j \leq N \text{ and } \sigma \in \fG \}.
\]

In the framework of the present article we consider permutation groups
associated to a partition $\pi = \{ I_1, \ldots, I_m \}$ of
$\{ 1, \ldots, N \}$, that is,
\[
\fG_\pi := \Aut( I_1 ) \times \cdots \times \Aut( I_m ).
\]
By contrast, in \cite{SC-groupsym} the authors work with more general
subgroups of $S_N$. Furthemore, the matrices in a given stratum
$\stratum{\pi}^{\{1\}}$ have diagonal blocks with equal entries, as
the permutations act separately on the rows and columns of
$\C^{N \times N}$, whereas the diagonal blocks of a square matrix in
$\wg$ may have different diagonal and off-diagonal entries.

Having acknowledged these differences, we now discuss the setting
of~\cite{SC-groupsym} from the viewpoint adopted above. The first step
is to establish the existence of a suitable partition associated with
a given matrix.

\begin{proposition}\label{Pgroupsym}
Fix a unital commutative ring $R$ and a multiplicative subgroup
$G \subset R^\times$. Given a matrix $A \in R^{N \times N}$, where
$N \geq 1$, there is a unique minimal partition
$\varpi_{\min} = \{ I_1, \ldots, I_m \} \in \Pi_N$ satisfying the
following two properties.
\begin{enumerate}
\item If $i$, $j \in \{ 1, \ldots, N \}$ are distinct, then all
entries of the block $A_{I_i \times I_j}$ lie in a single $G$-orbit.
 
\item If $i \in \{ 1, \ldots, N \}$, then all diagonal entries of
$A_{I_i   \times I_i}$ all lie in a single $G$-orbit, as do all
off-diagonal entries.
\end{enumerate}
\end{proposition}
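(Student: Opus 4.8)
The plan is to mimic the uniqueness argument of Theorem~\ref{Texists}, replacing the single equivalence relation with one that simultaneously tracks the off-diagonal and the diagonal/off-diagonal distinctions inside each block. First I would set up existence: the partition $\pi_\vee = \{\{1\},\dots,\{N\}\}$ trivially satisfies (1) and (2) (vacuously for (2), since a $1\times 1$ diagonal block has no off-diagonal entries), so the family of partitions satisfying (1)--(2) is non-empty, and it therefore has a coarsest element provided it is closed under the meet operation $\wedge$ in $\Pi_N$. Uniqueness of a minimal partition then follows, exactly as in Theorem~\ref{Texists}.

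The crux is the closure-under-meet claim: if $\pi_1$ and $\pi_2$ both satisfy (1)--(2), then so does $\pi_1 \wedge \pi_2$. As in the proof of Theorem~\ref{Texists}, I would realise $\pi_1 \wedge \pi_2$ via the graph whose edges join indices lying in a common block of $\pi_1$ or of $\pi_2$, so that $i \sim i'$ in $\pi_1 \wedge \pi_2$ means there is a path $i = i_0 \leftrightarrow \cdots \leftrightarrow i_r = i'$ with each consecutive pair in a common block of $\pi_1$ or $\pi_2$. Now take $i \sim i'$ and $j \sim j'$ with (say) $i \neq j$; choosing paths for each and running the same telescoping computation
\[
a_{ij} \in G\,a_{i_1 j} = \cdots = G\,a_{i' j} = G\,a_{i' j_1} = \cdots = G\,a_{i' j'}
\]
gives $a_{ij} \in G\,a_{i'j'}$ provided every entry appearing along the way is an \emph{off-diagonal} entry of the block of $\pi_1$ (or $\pi_2$) in which the relevant step lives, so that property (1) of $\pi_\ell$ applies. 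Since $i \neq j$, and the blocks are refined, one checks the indices along the path stay off-diagonal relative to the $\pi_\ell$-block structure; this is where a small case-analysis is needed, and is the main obstacle.

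Handling the diagonal case $i = j = j'$ (i.e.\ $i \sim i'$, and we must compare $a_{ii}$ with $a_{i'i'}$, and also all off-diagonal entries of the block of $\pi_1\wedge\pi_2$ containing $i$) is the delicate point. Here one uses property (2) of $\pi_1$ and $\pi_2$: along a path $i = i_0 \leftrightarrow \cdots \leftrightarrow i_r = i'$, each step $i_{t} \leftrightarrow i_{t+1}$ lies in a common block of some $\pi_\ell$, and in that block $a_{i_t i_t} \in G\,a_{i_t i_{t+1}} \in G\,a_{i_{t+1} i_{t+1}}$ by combining the ``all diagonal entries in one orbit'' and ``all off-diagonal entries in one orbit'' clauses with the fact that $a_{i_t i_{t+1}}$ is off-diagonal (as $i_t \neq i_{t+1}$, these being distinct vertices on the path); telescoping yields $a_{ii} \in G\,a_{i'i'}$. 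A similar chain shows any two off-diagonal entries of the $(\pi_1\wedge\pi_2)$-block containing $i$ lie in one $G$-orbit, by connecting their index pairs through such paths and invoking (1) at the off-diagonal steps and (2) when a pair of equal indices is unavoidable.

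Finally, once closure under $\wedge$ is established, uniqueness of the minimal partition is formal: if $\pi$ and $\pi'$ are both minimal elements of the family, then $\pi \wedge \pi'$ is in the family and is refined by (i.e.\ coarser than or equal to) each of them, forcing $\pi = \pi\wedge\pi' = \pi'$. I would then remark that this $\varpi_{\min}$ is in general strictly coarser than the partition $\pi^G(A)$ of Proposition~\ref{Pexists}, since (2) imposes strictly weaker constraints on diagonal blocks, matching the discussion preceding the statement about $\wg$; no further work is needed for that comparison. The only genuinely non-routine part of the whole argument is the bookkeeping in the telescoping step ensuring that each intermediate entry is of the right type (diagonal versus off-diagonal) for the applicable clause of (1) or (2) to fire.
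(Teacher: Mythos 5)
Your overall strategy mirrors the paper's: show that the family of partitions satisfying (1)--(2) is closed under the lattice meet $\wedge$, via the connected-components/path realisation of $\pi_1 \wedge \pi_2$ and a telescoping argument, then conclude uniqueness of the coarsest element. (The paper's proof writes $\varpi_1 \vee \varpi_2$ at this point, but given the conventions set out in Theorem~\ref{Texists} -- where $\pi' \prec \pi$ means $\pi$ refines $\pi'$, so minimal means coarsest -- this must be a typo for $\wedge$, and your $\wedge$ is the correct operation.) The paper's proof is extremely terse, and both you and it recognise that the decisive new ingredient, beyond the argument of Theorem~\ref{Texists}, is the use of property (2) to relate entries above and below the diagonal of a diagonal block.

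That said, two points need correction or sharpening. First, the chain you write for diagonal entries, $a_{i_t i_t} \in G\,a_{i_t i_{t+1}} \in G\,a_{i_{t+1} i_{t+1}}$, misreads property (2): that property places the diagonal entries of a $\pi_\ell$-diagonal block into a single $G$-orbit and the off-diagonal entries into a (possibly \emph{different}) single $G$-orbit, but it does not link the two orbits. The middle step $a_{i_t i_t} \in G\,a_{i_t i_{t+1}}$ is therefore unjustified, and if it were true it would prove the false statement that diagonal and off-diagonal entries coincide up to $G$. The conclusion is still right, but the chain should be the direct one, $a_{i_t i_t} \in G\,a_{i_{t+1} i_{t+1}}$, both entries being diagonal in the $\pi_\ell$-block containing $i_t$ and $i_{t+1}$. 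Second, you locate the ``main obstacle'' in checking that the telescoping for property (1) stays off-diagonal; in fact that is automatic. Since each block of $\pi_\ell$ is contained in a single block of $\pi_1 \wedge \pi_2$, when $i,i'$ lie in one $(\pi_1\wedge\pi_2)$-block and $j$ in another, consecutive path vertices $i_t, i_{t+1}$ and the column index $j$ necessarily sit in distinct $\pi_\ell$-blocks, so property (1) applies at every step with no case analysis. The genuine new difficulty, which the paper flags explicitly, is in property (2) for off-diagonal entries of a diagonal block: to telescope from an above-diagonal to a below-diagonal position one needs the observation that $a_{pq} \in G\,a_{qp}$ whenever $p \neq q$ lie in a common block of $\pi_1$ or $\pi_2$, which is precisely what clause (2) of $\pi_\ell$ supplies. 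Your phrase ``invoking (2) when a pair of equal indices is unavoidable'' gestures at this but does not isolate it; spelling out the transposition $a_{pq} \in G\,a_{qp}$ would close the argument cleanly.
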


\begin{proof}
As in the proof of Theorem~\ref{Texists}, the key step is to show that
if $\varpi_1$ and $\varpi_2$ satisfy both of the properties given
above, then $\varpi_1 \vee \varpi_2$ does too. This proceeds as in
that proof, and the only part which is not immediate the case of
off-diagonal elements of a diagonal block, with one lying above the
diagonal and the other below. However, property (2) gives that
$a_{p q} \in G a_{q p}$ if $p$ and $q$ are distinct and lie in the
same block of $\varpi_1$ or $\varpi_2$, so one may ``cross the
diagonal''.
\end{proof}

Denote the partition in Proposition \ref{Pgroupsym} by $\varpi^G( A )$
and, as above, define for each partition $\varpi$ the set
\[
\cC^G_\varpi := \{ A \in \C^{N \times N} : \varpi^G( A ) = \varpi \}.
\]

\begin{proposition}
There is a natural stratification of $\C^{N \times N}$ resulting from
Proposition~\ref{Pgroupsym}, namely
\begin{equation}\label{Eschubert2}
\C^{N \times N} = \bigsqcup_{\varpi \in \Pi_N} \cC^G_\varpi, %
\qquad \text{and} \qquad %
\overline{\cC^G_\varpi} = %
\bigsqcup_{\varpi' \prec \varpi} \cC^G_{\varpi'}.
\end{equation}
Furthermore, we have that
$\overline{\cC_\varpi^{\{ 1 \}} } = \cW_{\fG_\varpi}$.
\end{proposition}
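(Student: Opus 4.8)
The plan is to establish the stratification identities in \eqref{Eschubert2} exactly as in Proposition~\ref{Pstrata}, and then separately identify the closure of the trivial stratum with the group-symmetry space $\cW_{\fG_\varpi}$. For the stratification itself, the disjoint-union decomposition $\C^{N\times N}=\bigsqcup_\varpi \cC^G_\varpi$ is immediate from Proposition~\ref{Pgroupsym}, since every matrix $A$ has a well-defined partition $\varpi^G(A)$. For the closure formula, I would argue as follows. First, note that $A\in \overline{\cC^G_\varpi}$ can be detected by the conditions in Proposition~\ref{Pgroupsym}(1)--(2) with "single $G$-orbit" relaxed to "single $\overline{G}$-orbit", where $\overline{G}$ is the closure of $G$ in $R$ (for $R=\C$ with the usual topology); equivalently, $\overline{\cC^G_\varpi}=\{A: \varpi^G(A)\prec\varpi \text{ in a suitable sense}\}$. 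Concretely, if $\varpi'\prec\varpi$ then any $A\in\cC^G_{\varpi'}$ is a limit of matrices in $\cC^G_\varpi$ (perturb within the coarser orbit structure so that blocks that were forced equal become merely orbit-equivalent, then let the perturbation shrink), giving $\bigsqcup_{\varpi'\prec\varpi}\cC^G_{\varpi'}\subset\overline{\cC^G_\varpi}$; conversely, the defining orbit-constancy conditions on the blocks of $\varpi$ are closed conditions (each says a finite set of entry-ratios lies in $\overline{G}$, or when $G=\{1\}$ that certain entries coincide), so $\overline{\cC^G_\varpi}$ is contained in the set of matrices all of whose $\varpi$-blocks satisfy (1)--(2), which is precisely $\bigsqcup_{\varpi'\prec\varpi}\cC^G_{\varpi'}$ by minimality of $\varpi^G$. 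This is the same mechanism underlying \eqref{Eschubert} and Proposition~\ref{Pstrata}, so I would simply indicate it is "as in the proof of Proposition~\ref{Pstrata}".

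The genuinely new content is the final identity $\overline{\cC^{\{1\}}_\varpi}=\cW_{\fG_\varpi}$. I would prove this by a direct double inclusion using the explicit descriptions of both sides. Unwinding the definition with $G=\{1\}$: by the closure formula just established, $A\in\overline{\cC^{\{1\}}_\varpi}$ if and only if $\varpi^{\{1\}}(A)\prec\varpi$, which by Proposition~\ref{Pgroupsym}(1)--(2) with trivial $G$ says exactly that each off-diagonal block $A_{I_i\times I_j}$ ($i\neq j$) is constant, and each diagonal block $A_{I_i\times I_i}$ has a constant diagonal and a constant off-diagonal part. On the other side, $\cW_{\fG_\varpi}=\{A:a_{pq}=a_{\sigma(p)\sigma(q)} \text{ for all }\sigma\in\fG_\varpi\}$ with $\fG_\varpi=\Aut(I_1)\times\cdots\times\Aut(I_m)$. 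The key combinatorial observation is that two index pairs $(p,q)$ and $(p',q')$ lie in the same $\fG_\varpi$-orbit on $\{1,\dots,N\}^2$ precisely when: $p,p'$ lie in the same block $I_i$, $q,q'$ lie in the same block $I_j$, and additionally $p=q\iff p'=q'$. Hence the $\fG_\varpi$-invariance conditions $a_{pq}=a_{\sigma(p)\sigma(q)}$ are equivalent to constancy of $A$ on each such orbit, which is exactly the block-constancy description of $\overline{\cC^{\{1\}}_\varpi}$ above. So the two sets coincide.

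The steps, in order, are: (i) record that $\bigsqcup_\varpi \cC^G_\varpi=\C^{N\times N}$ is immediate from Proposition~\ref{Pgroupsym}; (ii) prove the closure formula $\overline{\cC^G_\varpi}=\bigsqcup_{\varpi'\prec\varpi}\cC^G_{\varpi'}$ by the two-sided argument sketched above, citing the proof of Proposition~\ref{Pstrata} for the closed-condition and limit-construction mechanics; (iii) specialize to $G=\{1\}$ and write out the block-constancy characterization of $\overline{\cC^{\{1\}}_\varpi}$; (iv) describe the $\fG_\varpi$-orbits on ordered pairs and deduce the matching characterization of $\cW_{\fG_\varpi}$; (v) conclude equality. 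I expect step (ii) to be the least interesting but to require the most care about exactly which topology/limit argument is invoked; the real crux, and where I would spend the most attention, is step (iv) — pinning down the orbit structure of $\fG_\varpi=\prod_j\Aut(I_j)$ acting on $\{1,\dots,N\}^2$, in particular remembering the diagonal/off-diagonal distinction within a block $I_i\times I_i$, since it is precisely this subtlety (the same one flagged in the proof of Proposition~\ref{Pgroupsym} about "crossing the diagonal") that makes the characterizations line up. Everything else is bookkeeping.
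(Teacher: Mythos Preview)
The paper offers no proof of this proposition: it is stated immediately after Proposition~\ref{Pgroupsym} and, like Proposition~\ref{Pstrata}, treated as self-evident. Your write-up therefore supplies what the paper leaves implicit, and your argument for the substantive claim $\overline{\cC^{\{1\}}_\varpi}=\cW_{\fG_\varpi}$ is correct: the $\fG_\varpi$-orbits on ordered pairs are indeed the off-diagonal blocks $I_i\times I_j$ for $i\neq j$ together with, inside each diagonal block $I_i\times I_i$, the diagonal and the off-diagonal separately (using that $\Aut(I_i)=S_{|I_i|}$ is $2$-transitive on $I_i$), and this matches exactly the block-constancy conditions of Proposition~\ref{Pgroupsym} with $G=\{1\}$.

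One caveat on your step~(ii) for general $G$: you rightly observe that passing to limits replaces ``single $G$-orbit'' by ``single $\overline{G}$-orbit'', but you then identify the resulting set with $\bigsqcup_{\varpi'\prec\varpi}\cC^G_{\varpi'}$. These coincide only when $G$ is closed in $\C^\times$; for a dense subgroup $G\subsetneq S^1$ one can have $A\in\overline{\cC^G_{\varpi_0}}$ with $\varpi^G(A)\not\prec\varpi_0$ (for instance $\diag(1,e^{i\theta})$ with $e^{i\theta}\in\overline{G}\setminus G$ is a limit of matrices in $\cC^G_{\{\{1,2\}\}}$ yet lies in $\cC^G_{\{\{1\},\{2\}\}}$). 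This wrinkle does not touch the final identity, where $G=\{1\}$ is closed and your closed-condition and perturbation arguments go through cleanly.
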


Thus $\C^{N \times N}$ indeed admits a stratification, analogous to
the situation above. However, the reason we do not proceed further
along these lines is the lack of a \textit{rank-preserving}
equivalence of the stratum $\cC_\varpi^{\{ 1 \}}$ with any
lower-dimensional space. Indeed, if
$\varpi = \{ I_1, \ldots, I_m \}$ and
$A_\varpi := \sum_{j = 1}^m \one{I_j}$, then the
sum of $A_\varpi$ with any positive multiple of the $N \times N$
identity matrix is an element of~$\cC_\varpi^{\{ 1 \}}$ with full rank.

\subsection{Block correlation matrices and group kernels}

A popular approach for constructing probabilistic models involving
categorical data consists of grouping the input levels in such a way
that correlation is constant within each group and across groups
\cite{archakova2020,Cadima2010,HuangYang2010,roustant2017}. In
particular, the rapidly developing area of \emph{group kernels}
exploits this idea to obtain useful Gaussian processes for categorical
variables \cite{Qian2008,roustant2020}. This approach yields
parsimonious covariance models that can be naturally analyzed in the
framework of the current paper. To elaborate, consider a categorical
problem involving a potentially large number of levels $N$, {partitioned}
into a typically small number of groups $m$ of sizes $n_1$, \ldots,
$n_m$, so that $n_1 + \cdots + n_m = N$. Assuming the within-groups
correlations and between-groups correlation are constant, the
associated covariance matrix $A$ can be written in block form
\begin{equation}\label{Eblock}
A = ( A_{i j} )_{i, j = 1}^m, 
\end{equation}
where the diagonal blocks $A_{i i} \in \R^{n_i \times n_i}$ are
compound symmetry matrices of the form
\[
A_{i i} = \Id_{n_i} + c_{i i} ( \one{n_i} - \Id_{n_i} )
\]
and the off-diagonal blocks $A_{i j} \in \R^{n_i \times n_j}$ are of
the form
\[
A_{i j} = c_{i j} \bJ_{n_i \times n_j}.
\]
As shown in \cite{roustant2017,roustant2020}, positive definiteness of
$A$ is equivalent to the positivity of the much smaller compressed
matrix $\down( A )$, where $\pi$ is the partition of
$\{ 1, \ldots, N \}$ associated with the block structure of
$A$. Earlier versions of this result may be found in
\cite{Cadima2010} and \cite{HuangYang2010}.

\begin{theorem}[{\cite[Theorem~2]{roustant2017}}]\label{TblockCorr}
Let $A = ( A_{i j} )_{i, j = 1}^m$ be as above, with $0 < c_{i j} < 1$
for $i$, $j = 1$, \ldots, $m$. Then $A$ is positive definite if and
only if its compression $\down( A )$ is positive definite.
\end{theorem}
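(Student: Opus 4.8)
The idea is that, although $A$ itself does not lie in $\overline{\stratum{\pi}}$ (its diagonal blocks are not constant, since $0<c_{ii}<1$), it differs from an element of $\overline{\stratum{\pi}}$ by a positive definite diagonal matrix, and this is enough. Using $A_{ii}=c_{ii}\one{n_i}+(1-c_{ii})\Id_{n_i}$ and $A_{ij}=c_{ij}\bJ_{n_i\times n_j}$ together with~(\ref{Eup}), I would first write
\[
A=\up(C)+D=\cW_\pi\,C\,\cW_\pi^*+D,\qquad C:=(c_{ij})_{i,j=1}^m,
\]
where $D$ is the $N\times N$ diagonal matrix whose restriction to the block $I_i$ equals $(1-c_{ii})\Id_{n_i}$; since $c_{ii}<1$, the matrix $D$ is positive definite. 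Setting $E:=\diag(1-c_{11},\dots,1-c_{mm})$, one checks $D\cW_\pi=\cW_\pi E$, and, since $\cW_\pi^*\cW_\pi=D_\pi$, a short computation with~(\ref{Edown}) then gives both $\down(A)=C+ED_\pi^{-1}$ and the identity $\cW_\pi^*A\cW_\pi=D_\pi\down(A)D_\pi$.

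The structural heart of the argument is that $A$ respects the orthogonal decomposition $\C^N=\im\cW_\pi\oplus\ker\cW_\pi^*$. Indeed, $\up(C)=\cW_\pi C\cW_\pi^*$ obviously maps $\C^N$ into $\im\cW_\pi$, while $D\cW_\pi=\cW_\pi E$ shows that $D$ maps $\im\cW_\pi$ into itself; hence $A$ maps $\im\cW_\pi$ into itself, and---being Hermitian---it therefore also preserves the orthogonal complement $\ker\cW_\pi^*$. Choosing an orthonormal basis adapted to this splitting makes $A$ block diagonal, so $A$ is positive definite if and only if it is positive definite on each of the two summands.

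On $\ker\cW_\pi^*$ the map $A$ coincides with $D$ (the condition $\cW_\pi^*\bu=0$ kills the $\up(C)$ term), so this restriction is automatically positive definite. On $\im\cW_\pi$ I would pull back through the injection $\cW_\pi\colon\C^m\to\im\cW_\pi$: for $\bw\in\C^m$,
\[
(\cW_\pi\bw)^*A(\cW_\pi\bw)=\bw^*(\cW_\pi^*A\cW_\pi)\bw=\bw^*\,D_\pi\down(A)D_\pi\,\bw,
\]
so $A$ is positive definite on $\im\cW_\pi$ if and only if $D_\pi\down(A)D_\pi$ is, which (congruence by the invertible $D_\pi$) holds if and only if $\down(A)$ is positive definite. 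Combining the two summands yields that $A$ is positive definite if and only if $\down(A)$ is, as claimed.

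The only genuinely substantive point is the observation that $A$ preserves $\im\cW_\pi$ and hence splits orthogonally; once that is in hand, the equivalence is just the transport of a positive definite form through the full-rank map $\cW_\pi$, together with the routine diagonal-matrix bookkeeping that identifies $\cW_\pi^*A\cW_\pi$ with $D_\pi\down(A)D_\pi$. I would also note that the only hypothesis actually used is $c_{ii}<1$ (so that $D$ is positive definite); the constraints on the off-diagonal $c_{ij}$ play no role in this equivalence. An alternative, essentially equivalent route is to conjugate by $D^{-1/2}$, reduce to $\Id_N+WCW^*$ with $W:=D^{-1/2}\cW_\pi$, and use that the non-zero eigenvalues of $WCW^*$ coincide with those of $(W^*W)^{1/2}C(W^*W)^{1/2}$; this leads to the same conclusion after one further congruence.
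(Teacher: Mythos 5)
Your proof is correct, and it follows a genuinely different route from the paper's. The paper derives Theorem~\ref{TblockCorr} as a corollary of the more general Theorem~\ref{TblockCorrGen}, using the decomposition $A = \up(\down(A)) + D'$ with $D'$ the block-diagonal matrix whose $i$th block is $A_{ii} - \overline{A_{ii}}\one{n_i} = (1-c_{ii})(\Id_{n_i} - n_i^{-1}\one{n_i})$; since this residual is only positive \emph{semi}definite (each block annihilates $\bJ_{n_i}$), the positive-definite implication in the paper requires a separate kernel-intersection argument. You instead choose a different splitting, $A = \up(C) + D$ with $C = (c_{ij})$ rather than $\down(A)$, which makes the diagonal residual $D$ a genuine positive multiple of the identity on each block, hence positive definite. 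Combined with the observation that $D\cW_\pi = \cW_\pi E$ forces $A$ to preserve the orthogonal splitting $\C^N = \im\cW_\pi \oplus \ker\cW_\pi^*$, this reduces both directions of the equivalence to a single congruence computation $\cW_\pi^* A\cW_\pi = D_\pi\down(A)D_\pi$ on $\im\cW_\pi$, plus the trivial observation that $A|_{\ker\cW_\pi^*} = D|_{\ker\cW_\pi^*}$ is positive definite. Your argument is therefore more self-contained and structurally cleaner for this particular theorem, at the cost of relying on the compound-symmetry form of the diagonal blocks: the paper's route through Theorem~\ref{TblockCorrGen} buys a strictly more general statement (arbitrary Hermitian diagonal blocks satisfying $B_{ii} \geq \overline{B_{ii}}\one{n_i}$), where your $D$ would no longer be positive definite and the orthogonal splitting would no longer be exact. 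Your closing observation that only $c_{ii} < 1$ is used is also consistent with the paper, whose reduction likewise invokes only the positive semidefiniteness of $(1-c_{ii})(\Id_{n_i} - n_i^{-1}\one{n_i})$.
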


In fact, Theorem \ref{TblockCorr} is a consequence of the following
more general result. Recall the Loewner order: if $A$ and $B$ are { Hermitian} 
square complex matrix of the same size, then $A \geq B$ if and only if
$A - B$ is positive semidefinite.

\begin{theorem}[{\cite[Theorem~1]{roustant2017}}]\label{TblockCorrGen}
Let $B = ( B_{i j} )_{i, j = 1}^m$ be an arbitrary {Hermitian} block matrix with
$B_{i j} \in \R^{n_i \times n_j}$ for $i$, $j = 1$, \ldots, $m$.
Denote by $\pi$ the partition of $\{ 1, \ldots, n_1 + \cdots + n_m \}$
associated with the block structure of $B$ and let
$\overline{B_{i i}} := \down( B_{i i} ) \in \R$ be the arithmetic mean
of the entries of the block~$B_{i i}$. If
\[
B_{i i} \geq \overline{B_{i i}} \one{n_i} \qquad ( i = 1, \ldots, m )
\]
then the following are equivalent.
\begin{enumerate}
\item The matrix $B$ is positive semidefinite.
  
\item The matrix $\down( B )$ and the diagonal blocks $B_{1 1}$,
\ldots, $B_{m m}$ are positive semidefinite.
\end{enumerate}
The result also holds when ``positive semidefinite'' is replaced by
``positive definite'' in both (1) and (2).
\end{theorem}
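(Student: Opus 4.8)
The plan is to diagonalise $B$ along the orthogonal splitting $\C^N = V \oplus V^\perp$, where $V := \im \cW_\pi$ is the $m$-dimensional subspace of vectors that are constant on the blocks of $\pi$. I would let $P_i := n_i^{-1} \one{n_i} \in \C^{n_i \times n_i}$ be the rank-one orthogonal projection onto the span of the all-ones vector, put $Q_i := \Id_{n_i} - P_i$, and set $P := \bigoplus_{i = 1}^m P_i$ and $Q := \Id_N - P$; then $P = \cW_\pi D_\pi^{-1} \cW_\pi^*$, $\im P = V$ and $\im Q = V^\perp$. The first and decisive step is to unpack the hypothesis. Since $\overline{B_{ii}} \one{n_i} = P_i B_{ii} P_i$ and $P_i$ has rank one, writing $B_{ii}$ in $2 \times 2$ block form with respect to $P_i \oplus Q_i$ shows that $B_{ii} - P_i B_{ii} P_i \geq 0$ if and only if its off-diagonal corner $Q_i B_{ii} P_i$ vanishes and $Q_i B_{ii} Q_i \geq 0$; in particular the hypothesis forces every diagonal block $B_{ii}$ to have all of its row sums equal to $n_i \overline{B_{ii}}$.

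Next I would combine this with the fact that each off-diagonal block is a constant multiple of $\bJ_{n_i \times n_j}$ to show that $B$ maps $V$ into itself: applying $B$ to the indicator vector of a block $I_i$ produces a vector whose restriction to each block $I_k$ is a constant multiple of the all-ones vector, equal to $c_{k i} n_i$ times it for $k \neq i$ and to $n_i \overline{B_{ii}}$ times it for $k = i$ (here the constant row-sum property is used). As $B$ is Hermitian it then preserves $V^\perp$ as well, so $P B Q = Q B P = 0$ and
\[
B = PBP + QBQ, \qquad QBQ = \bigoplus_{i = 1}^m Q_i B_{ii} Q_i,
\]
the last equality because $Q_i \bJ_{n_i \times n_j} Q_j = 0$. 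Hence $B$ is positive (semi)definite if and only if $PBP$ is positive (semi)definite on $V$ and each $Q_i B_{ii} Q_i$ is positive (semi)definite on $\im Q_i$. For the first condition, I would observe that the matrix of $PBP$ in the orthonormal basis of $V$ given by the columns of $\cW_\pi D_\pi^{-1/2}$ is exactly $\ccdown(B) = D_\pi^{1/2} \down(B) D_\pi^{1/2}$, which is congruent to $\down(B)$ via the invertible matrix $D_\pi^{1/2}$ and is therefore positive (semi)definite precisely when $\down(B)$ is. (In the semidefinite case one could instead note $PBP = \up(\down(B)) \in \overline{\stratum{\pi}}$ and quote Theorem~\ref{Tentrywise}.)

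It then remains to trade the conditions $Q_i B_{ii} Q_i \geq 0$ for $B_{ii} \geq 0$. One direction is immediate since $Q_i B_{ii} Q_i$ is a compression of $B_{ii}$. For the converse, positive semidefiniteness of $\down(B)$ makes its $i$-th diagonal entry $\overline{B_{ii}}$ nonnegative, and then the first step shows that $B_{ii}$ is block-diagonal with respect to $P_i \oplus Q_i$ with corners $\overline{B_{ii}} \one{n_i} \geq 0$ and $Q_i B_{ii} Q_i \geq 0$, so $B_{ii} \geq 0$. This yields the equivalence (1) $\Leftrightarrow$ (2); the positive-definite version follows by running the same argument with every inequality strict, using that $\down(B) > 0$ forces every $\overline{B_{ii}} > 0$ and that compression to $V$ and to $\im Q_i$ preserves positive definiteness. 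The step I expect to be the real obstacle is the first one: the domination $B_{ii} \geq \overline{B_{ii}} \one{n_i}$ looks innocuous, but because $\one{n_i}$ has rank one its genuine content is that it fixes the row sums of $B_{ii}$, and this is exactly what decouples $B$ into its $V$- and $V^\perp$-parts and lets the small compressed matrix $\down(B)$ carry all the information about positivity. Once the reformulation is in place, everything else is routine bookkeeping with $\cW_\pi$, $D_\pi$, $\down$, $\ccdown$ and the already-established Theorems~\ref{Tentrywise} and~\ref{Tfunctional}.
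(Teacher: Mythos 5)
Your proof is correct and rests on the same underlying decomposition as the paper's --- namely $B = \up(\down(B)) + D$ with $D := \bigoplus_i (B_{ii} - \overline{B_{ii}}\one{n_i})$ --- but you extract more structure from it and, in doing so, make explicit a step the paper leaves implicit. The paper's argument simply notes that $\up(\down(B))$ and $D$ are both positive semidefinite, so their sum $B$ is too; for the positive-definite case it then runs a separate kernel count that quietly uses the fact that $\ker D_i$, when nontrivial, is spanned by $\bJ_{I_i}$. Your first step --- that $B_{ii} - \overline{B_{ii}}\one{n_i} = B_{ii} - P_i B_{ii} P_i \geq 0$ forces $Q_i B_{ii} P_i = 0$, i.e.\ constant row sums for $B_{ii}$ --- is precisely the content of that implicit fact, and it upgrades the additive decomposition to an orthogonal one: $\up(\down(B)) = PBP$ and $D = QBQ$ with $P + Q = \Id_N$, so $B$ is block-diagonal with respect to $V \oplus V^\perp$. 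From there both the semidefinite and definite equivalences drop out symmetrically, since a Hermitian operator is PSD (resp.\ PD) if and only if its two diagonal corners in such a splitting are, which is cleaner than the paper's kernel argument in the PD case. The trade-off is a slightly longer setup for the PSD direction, where the paper's sum-of-two-PSD observation is a one-liner. One caveat you share with the paper: the statement says ``arbitrary Hermitian block matrix,'' but both proofs silently assume that the off-diagonal blocks $B_{ij}$ ($i \neq j$) are constant multiples of $\bJ_{n_i \times n_j}$, as in Theorem~\ref{TblockCorr}; the result fails without this, so that hypothesis must be read into the statement.
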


Note that Theorem \ref{TblockCorr} follows from
Theorem~\ref{TblockCorrGen} because
\[
A_{i i} - \overline{A_{i i}} \one{n_i} = %
( 1 - c_{i i} ) ( \Id_{n_i} - n_i^{-1} \one{n_i} )
\]
is positive semidefinite when $0 < c_{i i} < 1$. More generally, the
same result holds if the diagonal blocks have the form
\[
A_{i i} = d_i \Id_{n_i} + c_{i i} ( \one{n_i}  - \Id_{n_i} ), \qquad %
\text{where } d_i \geq c_{i i}.
\]

For completeness, we provide a self-contained proof of
Theorem~\ref{TblockCorrGen} in the language developed above.

\begin{proof}[Proof of Theorem~\ref{TblockCorrGen}]
The implication (1) $\implies$ (2) is clear with the help of
Theorem~\ref{Tentrywise}. Conversely, if $C := \down( B )$ is
positive semidefinite, then so is $\up( C )$, by
Theorem~\ref{Tentrywise} again. The block-diagonal matrix
\[
D = \diag( B_{1 1} - \overline{B_{1 1}} \one{n_1}, \ldots, %
B_{m m} - \overline{B_{m m}} \one{n_m})
\] 
is positive semidefinite by assumption, and so $B = \up( C ) + D$ is
positive semidefinite. This shows that $(2) \implies (1)$.

For the positive-definite version, that $(1) \implies (2)$ is again
clear, with the help of (\ref{Edown}) and the fact that $W_\pi$ there
has full rank. For the converse, suppose $C := \down( B )$ is positive
definite and note that $B = \up( C ) + D$ is the sum of positive
semidefinite matrices. Thus, if $\bv$ is such that $\bv^T B \bv = 0$
then~$\bv$ lies in the kernels of $D$ and $\up( C )$. Since each
$B_{i i}$ is invertible, and rank is subadditive, the kernel of each
block of $D$ is at most one dimensional. Hence, the kernel of $D$ is
spanned by vectors of the form $\up( \be_i )$ for $i \in I$, where
$\be_1$, \ldots, $\be_m$ is the canonical basis of $\R^m$ and
$I \subset \{ 1, \ldots, m \}$. However, by
Proposition~\ref{Pupdownvec}(5), we have that
\[
\up( C ) \sum_{i \in I} v_i \up( \be_i ) = %
\up( C D_\pi \sum_{i \in I} v_i \be_i ) = \bZ \iff
\sum_{i \in I} v_i e_i = \bZ \qquad ( v_i \in \R )
\]
since $C$ and $D_\pi$ are invertible. Hence $D$ has trivial kernel and
$B$ is positive definite.
\end{proof}

An explicit expression for the eigendecomposition of the block
matrices featuring in Theorem \ref{TblockCorr} was obtained in
\cite{Cadima2010} and \cite{HuangYang2010}, as well as in
\cite{archakova2020} for the case where the diagonal blocks are of the
form
\[
A_{i i} = d_i \Id_{n_i} + c_{i i} ( \one{n_i} - \Id_{n_i} ).
\]
We provide a proof of the last result using our stratification
language and some results from Section~\ref{Sfuncalc}.

\begin{theorem}[{\cite[Theorem 1]{archakova2020}}]
Let $A = ( A_{i j} )_{i, j = 1}^m$ be a real matrix with
\[
A_{i j} = \begin{cases}
 d_i \Id_{n_i} + c_{i i} ( \one{n_i} - \Id_{n_i} ) & %
\text{if } i = j, \\
 c_{i j} \bJ_{n_i \times n_j} & \text{if } i \neq j.
\end{cases}
\]
Then $A = Q D Q^T$, where $Q$ is an orthogonal matrix,
\[
D = A' \oplus \bigoplus_{j = 1}^m ( d_j - c_{j j} ) \Id_{n_j - 1}
\]
and $A' = ( a'_{i j} )_{i, j = 1}^m$ with
\[
a'_{i j} = \begin{cases}
 d_i + c_{i i} ( n_i - 1 ) & \text{if } i = j, \\
 c_{i j} \sqrt{n_i n_j} & \text{if } i \neq j.
\end{cases}
\]
\end{theorem}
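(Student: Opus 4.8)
The plan is to exhibit the orthogonal matrix $Q$ blockwise, using the weighted inflation/compression machinery of Section~\ref{Sfuncalc} applied to the partition $\pi = \{ I_1, \ldots, I_m \}$ with $I_j$ of size $n_j$. The key observation is that $A = A^{(1)} + A^{(2)}$, where $A^{(1)} := \sum_{i,j} c_{i j} \one{I_i \times I_j} + \sum_j (c_{jj} - \text{(something)})\cdots$ — more precisely, write $A^{(1)}$ for the part of $A$ lying in $\overline{\stratum{\pi}}$ obtained by replacing each diagonal block $A_{jj}$ by $c_{jj}\one{n_j}$, and $A^{(2)} := \bigoplus_{j=1}^m (d_j - c_{jj})\Id_{n_j}$ for the remaining block-diagonal piece. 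The two summands commute, since $A^{(2)}$ is a block-scalar matrix and $A^{(1)} \in \overline{\stratum{\pi}}$, so they are simultaneously orthogonally diagonalizable, and it remains only to diagonalize each piece compatibly.

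First I would diagonalize $A^{(1)}$. By Theorem~\ref{Tfunctional}, $\ccdown$ is a $*$-isomorphism $\overline{\stratum{\pi}} \to \C^{m \times m}$, so $A^{(1)}$ is orthogonally similar, on $\im\ccup$, to $\ccdown(A^{(1)})$, whose $(i,j)$ entry one computes from the definition $\ccdown(X) = D_\pi^{1/2}\down(X)D_\pi^{1/2}$: the diagonal entry becomes $n_i \cdot c_{ii} = c_{ii} n_i$ and the off-diagonal entry becomes $\sqrt{n_i n_j}\, c_{ij}$. Adjusting for the fact that $A^{(1)}$ has $c_{ii}\one{n_i}$ rather than the full block, one gets that $\ccdown(A^{(1)}) = A'' $ where $a''_{ij} = c_{ii}n_i$ on the diagonal and $c_{ij}\sqrt{n_in_j}$ off it; combining with the $A^{(2)}$ contribution $d_j - c_{jj}$ on the block indexed by $j$ recovers exactly $a'_{jj} = d_j + c_{jj}(n_j - 1)$, matching the statement. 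On $\im\ccup$ we use an orthonormal eigenbasis of $A'$ pulled back via $\ccup$; by Proposition~\ref{Pupdownvec}(3) the map $\ccup$ is an isometry, so these pulled-back vectors are orthonormal in $\R^N$, and they span the $m$-dimensional subspace $\im\ccup$.

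Next I would handle the orthogonal complement $(\im\ccup)^\perp$, which by Proposition~\ref{Peigen} and the rank–nullity count has dimension $N - m = \sum_j (n_j - 1)$. On this complement $A^{(1)}$ acts as zero: indeed $\down(A^{(1)}\bu)$ determines $A^{(1)}\bu$ for $\bu$ with $\ccdown(\bu)=0$, and one checks $A^{(1)}\bu = 0$ there because each row of $A^{(1)}$ is constant on blocks. Meanwhile $A^{(2)}$ acts on the $j$-th block's contribution to the complement — namely $\{\bu \in \R^{I_j} : \sum_{k\in I_j} u_k = 0\}$, of dimension $n_j - 1$ — as the scalar $d_j - c_{jj}$. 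So on the complement $A = A^{(2)}$ is already diagonal with eigenvalue $d_j - c_{jj}$ of multiplicity $n_j - 1$, and I may take any orthonormal basis of $\{\bu \in \R^{I_j} : \mathbf{1}^T\bu = 0\}$ for each $j$; assembling these with the $m$ vectors from the previous paragraph gives the columns of $Q$, and reading off the eigenvalues gives $D = A' \oplus \bigoplus_j (d_j - c_{jj})\Id_{n_j - 1}$.

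The main obstacle is bookkeeping the decomposition $A = A^{(1)} + A^{(2)}$ consistently and verifying that $\ccdown(A^{(1)})$ plus the block-diagonal scalar shift genuinely produces the stated $A'$ — in particular that $a'_{ii} = d_i + c_{ii}(n_i - 1)$ arises as $c_{ii}n_i + (d_i - c_{ii})$, and that the $\sqrt{n_in_j}$ weights come out with the right exponents from the two factors of $D_\pi^{1/2}$. Everything else (commutativity, isometry of $\ccup$, the dimension count, vanishing of $A^{(1)}$ on the complement) is either immediate or cited directly from Section~\ref{Sfuncalc}; no genuinely hard estimate is involved.
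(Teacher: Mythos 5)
Your overall strategy mirrors the paper's --- find an additive decomposition of $A$ into a piece living in $\overline{\stratum{\pi}}$ and a block-diagonal piece that vanishes on $\im\ccup$, then build the eigenbasis out of $\ccup$ of an eigenbasis of $A'$ together with bases of the block-wise sum-zero subspaces. The decomposition you chose is slightly different: you take $A^{(1)}$ with diagonal blocks $c_{jj}\one{n_j}$ and $A^{(2)} = \bigoplus_j (d_j - c_{jj})\Id_{n_j}$, whereas the paper writes $A = \ccup(A') + \bigoplus_j (d_j - c_{jj})(\Id_{n_j} - n_j^{-1}\one{n_j})$; the two differ only in how a block-diagonal, block-constant piece is shared between the summands, and both work.

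There is, however, one genuine error in your reasoning: the claim that $A^{(1)}$ and $A^{(2)}$ commute is false. Indeed, for $i \ne j$,
\[
(A^{(1)} A^{(2)})_{I_i \times I_j} = c_{ij}\,(d_j - c_{jj})\,\bJ_{n_i \times n_j},
\qquad
(A^{(2)} A^{(1)})_{I_i \times I_j} = c_{ij}\,(d_i - c_{ii})\,\bJ_{n_i \times n_j},
\]
and these disagree unless $d_i - c_{ii}$ is the same for every $i$ (or $c_{ij} = 0$). A block-diagonal matrix with \emph{different} scalar blocks is not central in $\C^{N\times N}$, and in particular does not commute with elements of $\overline{\stratum{\pi}}$ having nonzero off-diagonal blocks. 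So the appeal to ``commuting $\Rightarrow$ simultaneously orthogonally diagonalizable'' is unavailable. Fortunately this claim is not what your construction actually uses: what you need, and what your argument does establish, is that both $A^{(1)}$ and $A^{(2)}$ preserve $\im\ccup$ (the first because $A^{(1)} \in \overline{\stratum{\pi}}$ maps everything into $\im\ccup$; the second because it multiplies each block of a block-constant vector by a scalar), hence so does $A$, and, being symmetric, $A$ also preserves $(\im\ccup)^\perp$. On $\im\ccup$, in the orthonormal basis $\ccup(\be_1),\ldots,\ccup(\be_m)$, the restriction of $A$ has matrix $A'$ (your computation of $\ccdown(A^{(1)})$ plus the diagonal shift from $A^{(2)}$ is exactly this). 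On $(\im\ccup)^\perp = \bigoplus_j V_j$, $A^{(1)}$ annihilates and $A^{(2)}$ acts as $d_j - c_{jj}$ on $V_j$. Replace the spurious commutativity step with this invariance argument and the proof is correct. The paper avoids the issue entirely by directly verifying that the two families of vectors are eigenvectors of $A$ itself rather than arguing through the summands.
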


\begin{proof}
As above, let $\overline{A_{ii}}$ denote the arithmetic mean of the
entries of the block $A_{ii}$, so that
$\overline{A_{ii}} = n_i^{-1} ( d_i + ( n_i - 1 ) c_{ii} )$ for
$i = 1$, \ldots, $m$, and let $\pi = \{ I_1, \ldots, I_m \}$ be the
partition associated with the block structure of $A$. Then
\begin{align}\label{Etemp}
 A & = \ccup( A' ) + %
 ( A_{1 1} - \overline{A_{1 1}} \bJ_{n_1 \times n_1} ) \oplus \cdots %
\oplus ( A_{m m} - \overline{A_{m m}} \bJ_{n_m \times n_m} )
\nonumber \\
 & = \ccup( A' ) + %
\Bigl( \bigoplus_{j = 1}^m ( d_j - c_{j j} ) \Id_{n_j} \Bigr) %
\Bigl( \bigoplus_{j = 1}^m %
( \Id_{n_j} - n_j^{-1} \bJ_{n_j \times n_j} ) \Bigr).
\end{align}
We claim that 
\begin{equation}\label{Eclaim}
\sigma( A ) = %
\sigma( A' ) \cup \{ d_1 - c_{1 1}, \ldots, d_m - c_{m m} \},
\end{equation}
where $d_j - c_{j j}$ has multiplicity $n_j - 1$ for $j = 1$, \ldots,
$m$. If this holds then, since $A$ and $D$ are symmetric and have the
same spectrum, there exists an orthogonal matrix $Q$ such that
$A = Q D Q^T$.

It remains to show the claim~(\ref{Eclaim}), and we do so by
explicitly producing an orthonormal eigenbasis. First, let $V_j$ be
the orthogonal complement of~$\bJ_{I_j}$ in~$\R^{I_j}$, padded by
zeros {to} form a subspace of $\R^N$, where $N := n_1 + \cdots + n_m$.
A direct calculation shows that every non-zero vector in $V_j$ is an
eigenvector of $A$ with eigenvalue $d_j - c_{j j}$. As the
subspaces $V_1$, \ldots, $V_m$ are pairwise orthogonal, this
eigenvalue has multiplicity $n_j - 1$ as required.

Next, let $\bw_1$, \ldots, $\bw_m \in \R^m$ be an orthonormal
eigenbasis for $A'$, with eigenvalues
$\lambda_1$, \ldots, $\lambda_m$. Then
$\{ \ccup( \bw_1 ), \ldots, \ccup( \bw_m ) \}$ is an
orthonormal set, by Proposition~\ref{Pupdownvec}(3), which lies in the
span of $\{ \bJ_{I_1}, \ldots, \bJ_{I_n} \}$, by definition, so is
orthogonal to $V_i$ for $i = 1$, \ldots, $m$. Furthermore,
if $\bv \in \R^m$ then Proposition~\ref{Pupdownvec}(5) implies that
\[
\bigoplus_{j = 1}^m \Bigl( \Id_{n_j} - n_j^{-1} \one{n_j} ) \Bigr) %
\ccup( \bv ) = ( \Id_N - \ccup( \Id_m ) ) \ccup( \bv ) = \bZ_N.
\]
Hence, by~(\ref{Etemp}) and Proposition~\ref{Pupdownvec}(5), we
conclude that
\[
A \ccup( \bw_j ) = \ccup(A') \ccup( \bw_j ) + \bZ_N = %
\ccup( A' \bw_j ) = \lambda_j \ccup( \bw_j ).
\qedhere
\]
\end{proof}

In the same spirit as here, several important matrix calculations
involving the matrix~$A$, including powers, exponential, logarithm,
and Gaussian log likelihood, can be performed by working with the
compressed matrix $A'$. See \cite{archakova2020} for more details.

\subsection{Transversal matrix structures}

The classical structures of Hankel and Toeplitz matrices interact with
isogenic block stratification in a very rigid manner, as we explain below.

Consider first a positive semidefinite Hankel matrix
$H = ( c_{j + k} )_{j, k = 0}^N$ with real entries. It is well known
(see, for example, \cite{Akhiezer}) that there exists at least one
positive measure $\mu$ on the real line having $c_j$ as power moments:
\[
c_j = \int_\R x^j \std\mu( x ) \qquad ( j = 0, \ldots, 2 N ).
\]
Then, if $H$ belongs to a stratum other than the topmost,
$\stratum{\pi_\vee}$, there are distinct indices $j$ and $k$ such that
\[
c_{2 j} = c_{j + k} = c_{2 k}.
\]
Consequently,
\[
\int_\R ( x^j - x^k )^2 \std\mu( x ) = 0,
\]
so the measure $\mu$ is supported by at most three points, $-1$, $0$
and $1$. It follows for $N \geq 2$ that $H$ has rank at most $3$ and
extends uniquely to an infinite positive semidefinite Hankel matrix,
whereas if $N = 1$ then $c_0 = c_1 = c_2$ and
\[
\int_\R ( 1 - x )^2 \std\mu( x ) = 0,
\]
so the measure $\mu$ is a point mass at $1$.

A similar situation arises from the analysis of a positive
semidefinite Toeplitz matrix $T = ( s_{k - j} )_{j, k = 0}^N$. Here,
we assume the entries to be complex and the positivity of $T$ implies
that
\[
s_0 \geq 0 \qquad \text{and} \qquad %
s_{-j} = \overline{s_j} \quad ( j = 0, \ldots, N ).
\]
The solution to the truncated trigonometric moment problem guarantees
a positive measure $\nu$ on $[ -\pi, \pi )$ such that
\[
s_j = \int_{-\pi}^\pi e^{j \theta} \std\nu( \theta ) %
\qquad ( j = -N, \ldots, N ).
\]
If, as before, $T$ has some non-trivial isogenic block structure, then
there exists an index $j$ satisfying
\[
\int_{-\pi}^\pi e^{-j \theta} \std\nu( \theta ) = %
\int_{-\pi}^\pi \std\nu( \theta ) = %
\int_{-\pi}^\pi e^{j \theta} \std\nu( \theta ).
\]
It follows that
\[
\int_{-\pi}^\pi | 1 - e^{j \theta} |^2 \std\nu( \theta ) = 0,
\]
so the measure $\nu$ is the sum of at most $j$ point masses, situated
on the vertices of the regular polygon determined by that equation
$z^j = 1$. Thus, the matrix $T$ is degenerate and extends uniquely, by
periodicity to an infinite positive semidefinite Toeplitz matrix.

\subsection{Stability of semigroups}

The simple observation that the spectral radius of a square matrix
belonging to the closure of a certain stratum is preserved by the
push-down operation $\ccdown$ immediately resonates with stability
criteria of evolution semigroups. To be more specific, consider the
time-invariant linear homogeneous system of differential equations,
\[
\frac{\rd u}{\rd t}(t) = A u( t ),
\]
where $A$ is an element of $\C^{N \times N}$ and
$u : [ 0, \infty ) \to \C^N$. As is well known, the solution
\begin{equation}\label{time-independent}
u( t ) = \exp( t A ) u( 0 )
\end{equation}
decays exponentially as $t \to \infty$, for arbitrary initial data
$u( 0 )$, if and only if the spectrum of $A$ lies in the open left
half-plane. In this case, the system is called asymptotically stable.

A great deal of work has been done to establish asymptotic-stability
criteria for linear systems, in both the time-invariant and then
time-dependent cases. A relatively early work in this area is
\cite{HP}, where the distance from a stable matrix to the unstable
region is described definitively.

The isogenic block stratification introduced in the present article
can be used in some cases to simplify the verification of stability.
If the Cayley transform $X = ( A - I ) ( A + I )^{-1}$ of $A$ belongs
to a closed stratum $\overline{\stratum{\pi}}$ and the spectrum of $X$
belongs to the open unit disk, {then} so does that of the
compression $\ccdown( X )$. Furthermore, the converse is true, which
gives the following asymptotic-stability criterion.

\begin{proposition}
Suppose the Cayley transform $X$ of the generator $A$ of the linear
system (\ref{time-independent}) belongs to the stratum
$\overline{\stratum{\pi}}$. The system is asymptotically stable if and
only if the spectral radius of the compression $\ccdown( X )$ is less
than~$1$.
\end{proposition}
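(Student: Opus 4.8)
The plan is to derive the criterion by combining three ingredients: the classical spectral characterisation of asymptotic stability, the Cayley-transform dictionary between the stability half-plane and the open unit disc, and the spectral-radius permanence of Proposition~\ref{Peigen}. Recall first, as noted just before the statement, that the system~\eqref{time-independent} is asymptotically stable if and only if $\sigma( A )$ is contained in the open left half-plane $\{ z \in \C : \operatorname{Re} z < 0 \}$. Since the Cayley transform $X$ is assumed to be well defined, the pole $-1$ of the relevant M\"obius function $\phi$ does not meet $\sigma( A )$, so the spectral mapping theorem identifies $\sigma( X )$ with $\phi( \sigma( A ) )$; as $\phi$ is a bijection of the Riemann sphere, the stability requirement $\sigma( A ) \subset \{ z : \operatorname{Re} z < 0 \}$ is thereby equivalent to $\sigma( X ) \subset D( 0, 1 )$, i.e.\ to the spectral radius of $X$ being strictly less than~$1$. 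Hence asymptotic stability of the system is equivalent to $X$ having spectral radius less than~$1$.

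It remains to transfer this condition from $X$ to its weighted compression $\ccdown( X )$, and here the hypothesis $X \in \overline{\stratum{\pi}}$ enters. By Proposition~\ref{Peigen} the map $\ccdown : \overline{\stratum{\pi}} \to \C^{m \times m}$ preserves the spectral radius; concretely $\sigma( X ) \setminus \{ 0 \} = \sigma( \ccdown( X ) ) \setminus \{ 0 \}$ by~\eqref{Edownspecev}, and since neither spectrum is empty their largest moduli coincide. Hence the spectral radius of $X$ is less than~$1$ if and only if that of $\ccdown( X )$ is, and combining this with the preceding paragraph proves the proposition. (One could equally invoke the equivalence $\sigma( X ) \subset D( 0, 1 ) \iff \sigma( \ccdown( X ) ) \subset D( 0, 1 )$ for $X \in \overline{\stratum{\pi}}$, recorded just before the statement.)

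There is no real obstacle: the proof merely assembles results already in hand, and its one genuine input is Proposition~\ref{Peigen}, which itself rests on the $*$-isomorphism $\ccdown$ of Theorem~\ref{Tfunctional}. The only points still to be checked are the routine ones attached to the Cayley transform --- that $-1 \notin \sigma( A )$, automatic the moment $X$ is defined, so that the spectral mapping theorem applies, and that $\phi$ is a bijection of the Riemann sphere, which is precisely what lets one pass from the spectral containment for $X$ back to that for $A$. Neither requires any work.
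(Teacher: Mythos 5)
Your argument is the one the paper intends --- pass to the Cayley transform, invoke Proposition~\ref{Peigen} --- but the step you declare routine is the one that fails as written. With the paper's $X = (A - \Id_N)(A + \Id_N)^{-1}$, the M\"obius map is $\phi(z) = (z-1)/(z+1)$, and $|\phi(z)| < 1$ is equivalent to $|z-1| < |z+1|$, i.e.\ to $z$ being closer to $1$ than to $-1$, i.e.\ to $\operatorname{Re} z > 0$. So $\phi$ sends the open \emph{right} half-plane onto the open unit disc; on the open left half-plane it takes values of modulus strictly greater than $1$. The equivalence you assert, $\sigma(A) \subset \{ \operatorname{Re} z < 0 \} \iff \sigma(X) \subset D(0,1)$, is therefore false for this $X$; with this formula, asymptotic stability corresponds instead to $\sigma(X)$ lying outside the closed unit disc. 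The map that does the job is $X = (A + \Id_N)(A - \Id_N)^{-1}$, since $|(z+1)/(z-1)| < 1 \iff \operatorname{Re} z < 0$. Either the paper's formula carries a sign slip or the conclusion should be rephrased in terms of $\ccdown(X)^{-1}$; in any case the direction of the Cayley map needs to be verified, not taken on faith.

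The rest of your argument is correct and is precisely the mechanism behind the statement: by Proposition~\ref{Peigen}, $\sigma(X) \setminus \{ 0 \} = \sigma(\ccdown(X)) \setminus \{ 0 \}$ for $X \in \overline{\stratum{\pi}}$, hence the spectral radii coincide, and the unit-disc condition transfers between $X$ and $\ccdown(X)$ with no further work.
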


Much more involved, but well studied due to its important applications
to control theory, is the case of a time-varying linear system
\[
\frac{\rd u}{\rd t}( t ) = A(t) u(t).
\]
As before, it may be beneficial to examine the compression of the
Cayley transform of the family of matrices $A( t )$, but we do not
enter into the details here.

One of the challenges of modern stability theory of switched dynamical
systems is the computation of the joint numerical radius of a tuple of
non-commuting matrices. More precisely, if $A_1$, $A_2$, \ldots, $A_n$
are complex $N \times N$ matrices, one wants to certify that
\[
\lim_{m \to \infty} \max_{\sigma \in \{ 1, 2, \ldots, n\}^m} %
\| A_{\sigma( 1 )} A_{\sigma( 2 )} \cdots A_{\sigma( m )}\|^{1 / m} < 1.
\]
While the general case of the corresponding weak inequality is known
to be undecidable, sufficient conditions for the strong inequality are
known and widely used; see \cite{PJ} and references therein. Once
again, being fortunate enough to have the matrices $A_1$, $A_2$,
\ldots, $A_n$ in a high-codimension isogenic stratum may considerably
simplify, via compression, the verification of such criteria.

\subsection{Hierarchical matrices}

With its origins in the theory of numerical approximation of integral
equations, a novel chapter of applied linear algebra emerged in the
last two decades having the concept of \emph{hierarchical matrix} at
its center \cite{Bebendorf,Hackbusch-2015}. The philosophy behind this
powerful new tool is to exploit, via effective numerical schemes, the
sparsity and hidden redundancy in a large matrix of relative low
rank. Starting with the factorization of a large matrix into a product
of lower-rank rectangular matrices of smaller size, a systematic study
of minimizing computational-cost algorithms for matrix multiplication,
polar decomposition, Cholesky decomposition and much more were devised
by Hackbusch, his disciples and an increasing number of followers. The
foundational work \cite{Hackbusch-1999} is complemented by the
informative survey \cite{Hackbusch-2016}.

The matrices belonging to the closure $\overline{\stratum{\pi}}$ of a
high-codimension stratum defined by the partition
$\pi = \{ I_1, I_2, \ldots, I_m \}$ of $\{ 1, \ldots, N \}$ are
hierarchical in an obvious way. Moreover, the compression
$\ccdown : \overline{\stratum{\pi}} \to \C^{m \times m}$ naturally
aligns with the main concepts of the hierarchical matrix theory. We
indicate here a few common trends.

The isogenic structure of $A \in \overline{\stratum{\pi}}$ is
reflected by the factorization
\[
A = \cW_\pi B \cW_\pi^*,
\]
where $B$ is an $m \times m$ matrix and $\cW_\pi \in \C^{N \times m}$;
see Proposition \ref{Pdownup}. Thus every element of
$\overline{\stratum{\pi}}$ can be decomposed with a universal left
factor $\cW_\pi$ and a right factor depending on $m^2$ complex
parameters. Moreover, the weighted compression $\ccdown$ built on the
same data (the partition $\pi$ and the parameters determining each
isogenic block) is a $*$-algebra homomorphism. Specifically, a matrix
$A \in \overline{\stratum{\pi}}$ has the
multiplicative decomposition
\[
A = %
\cW_\pi D_\pi^{-1} \cW_\pi^* \ \ccdown( A ) \ %
\cW_\pi D_\pi^{-1} \cW_\pi^*,
\]
and the correspondence $A \mapsto \ccdown( A )$ preserves all matrix
operations.

\begin{proposition}
If the matrix $A \in \overline{\stratum{\pi}}$, then compression
$\ccdown( A )$ has the same spectrum and singular {values} as
$A$, with the possible exception of the value zero.
\end{proposition}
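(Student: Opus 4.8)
The plan is to read off both claims from the spectral-permanence statement of Proposition~\ref{Peigen}, combined with the fact (Theorem~\ref{Tfunctional}) that $\ccdown \colon \overline{\stratum{\pi}} \to \C^{m \times m}$ is an isomorphism of unital $*$-algebras for the usual matrix product. The assertion about the spectrum is then immediate: it is exactly equation~\eqref{Edownspecev}, namely $\sigma( A ) \setminus \{ 0 \} = \sigma( \ccdown( A ) ) \setminus \{ 0 \}$, and the final sentence of Proposition~\ref{Peigen} records that the geometric multiplicities agree off the eigenvalue~$0$.

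For the singular values, the key point is that $A^* A \in \overline{\stratum{\pi}}$: the stratum closure is closed under the usual matrix product (Section~\ref{Sentcalc}) and under the adjoint (it is a $*$-algebra by Theorem~\ref{Tfunctional}), hence is a $*$-subalgebra of $\C^{N \times N}$. Since $\ccdown$ is a $*$-homomorphism for the usual product, $\ccdown( A^* A ) = \ccdown( A )^* \ccdown( A )$. Now $A^* A$ is positive semidefinite, so applying Proposition~\ref{Peigen} to it gives $\sigma( A^* A ) \setminus \{ 0 \} = \sigma( \ccdown( A )^* \ccdown( A ) ) \setminus \{ 0 \}$, and because $A^* A$ and $\ccdown( A )^* \ccdown( A )$ are Hermitian — so that geometric and algebraic multiplicities coincide — this equality holds with multiplicities counted. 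The nonzero singular values of a matrix $X$ are precisely the positive square roots of the nonzero eigenvalues of $X^* X$, with the same multiplicities; taking square roots on both sides of the displayed equality therefore shows that $A$ and $\ccdown( A )$ have the same singular values apart from possible differences in the multiplicity of~$0$.

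I do not expect a genuine obstacle here; the proof is an assembly of earlier results. The only steps needing a word of care are verifying that $A^* A$ still lies in $\overline{\stratum{\pi}}$ (so that Proposition~\ref{Peigen} and the $*$-homomorphism property of $\ccdown$ may be invoked), and keeping track of multiplicities — which is painless because $A^* A$, $\ccdown( A )^* \ccdown( A )$, and all the relevant conclusions of Proposition~\ref{Peigen} concern Hermitian matrices on the part of the spectrum that matters. If one prefers to phrase the singular-value statement via $|A| := ( A^* A )^{1/2}$, one should still run the argument through $A^* A$ rather than $|A|$, since the square-root function is not holomorphic at~$0$ and so the holomorphic-functional-calculus transfer does not directly apply to $|A|$ when $A$ is singular.
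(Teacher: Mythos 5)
Your argument is correct and follows the paper's own route: spectrum via Proposition~\ref{Peigen}, and singular values by passing to $A^*A$, using that $\ccdown$ is a $*$-homomorphism so $\ccdown(A)^*\ccdown(A)=\ccdown(A^*A)$ and again invoking Proposition~\ref{Peigen}. The paper states this more tersely; your extra care about $A^*A\in\overline{\stratum{\pi}}$, multiplicities, and the non-holomorphy of the square root at $0$ is all sound but not a different approach.
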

\begin{proof}
Proposition~\ref{Peigen} gives that
$\sigma( A ) \setminus \{ 0 \} = %
\sigma( \ccdown( A ) ) \setminus \{0 \}$.
Moreover, the singular {values} of $A$ are the eigenvalues of
its modulus $| A | = \sqrt{A^* A}$, but $A^* A$ and
$\ccdown ( A )^* \ccdown( A )$ have the same non-zero
eigenvalues.
\end{proof} 

Lifting the polar decomposition is slightly more subtle. Let
$B = \ccdown( A )$, where $A \in \overline{\stratum{\pi}}$. The polar
decomposition $B = U_B | B |$ lifts immediately to~$A = V_A | A |$,
but $V_A := \ccup( U_B )$ is only a partial isometry. There is,
however, a unitary matrix $U_A \in \C^{N \times N}$ such that
that $U_A | A | = V_A | A |$. The initial and final spaces of $V_A$
are equal and contain the range of $A$, and are spanned by
$\{ \bJ_{I_k} : k = 1, \ldots, m \}$. If $W$ is the orthogonal
projection onto the orthonogal complement of this space then
$U_A = V_A + W$ is as desired.

The lifting of an LU decomposition is similarly not quite immediate.
As before, let $B$ be the compression of the matrix
$A \in \overline{\stratum{\pi}}$ and suppose $B = L U$, where $L$ is a
lower-triangular matrix and $U$ is upper triangular. Then the liftings
$\ccup( L )$ and $\ccup( U )$ will be only  block triangular, being
elements of $\overline{\stratum{\pi}}$. To obtain a genuine LU
decomposition, one has to depart from the isogenic structure and split
the products of rank-one matrices in a consistent pattern.  For
example, one can factor the product of $k \times n$ and $n \times m$
isogenic matrices as follows:
\[
\begin{pmatrix}
a & \cdots & a\\
\vdots & \ddots & \vdots\\
a & \cdots & a
\end{pmatrix} \begin{pmatrix}
b & \cdots & b\\
\vdots & \ddots & \vdots\\
b & \cdots & b
\end{pmatrix} = \begin{pmatrix}
a \sqrt{n} & 0 & \cdots & 0\\
\vdots & & \ddots & \vdots\\
a \sqrt{n} & 0 & \cdots & 0
\end{pmatrix} \begin{pmatrix}
b \sqrt{n} & \cdots & b \sqrt{n}\\
0 & \cdots & 0\\
\vdots & \ddots & \vdots\\
0 & \cdots & 0
\end{pmatrix}.
\]

Finally, on the lighter side, we state the following theorem to give a
demonstration of the general paradigm for results one can derive using
the compression-inflation procedure described above.

\begin{theorem} 
Let $A$, $B$ and $C$ be real matrices belonging to
$\overline{\stratum{\pi}}$, where $\pi$ is a partition of
$\{ 1, 2, \ldots, N \}$. The classical equations of Lyapunov,
Sylvester and Riccati,
\[
X + X A^T = C \qquad %
A X + X B = C \quad \text{and} \quad %
A X + X A^T - X B X = C,
\]
have a solution $X \in \C^{N \times N}$, with $X = X^T$ for the
Riccati case, if and only if the compressed equations have a solution
in $\C^{m \times m}$.
\end{theorem}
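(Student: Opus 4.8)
The plan is to reduce everything to matrices lying inside $\overline{\stratum{\pi}}$ and then transport the three equations to $\C^{m\times m}$ via the $*$-isomorphism $\ccdown$ of Theorem~\ref{Tfunctional}. By ``the compressed equations'' I mean the equations obtained by replacing $A$, $B$, $C$ with $\ccdown(A)$, $\ccdown(B)$, $\ccdown(C)$ and $X$ with a matrix in $\C^{m\times m}$. Since $A$, $B$, $C$ are real and $\ccdown$ sends real matrices in $\overline{\stratum{\pi}}$ to real matrices in $\C^{m\times m}$, $*$-equivariance gives $\ccdown(A^T)=\ccdown(A)^T$, so the compressed equations again have Lyapunov, Sylvester and Riccati shape. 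I would treat the two implications separately.

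For the ``if'' direction, suppose one of the compressed equations has a solution $Y\in\C^{m\times m}$, symmetric in the Riccati case. Set $X:=\ccup(Y)\in\overline{\stratum{\pi}}\subset\C^{N\times N}$. Since $\ccup$ is a $*$-algebra homomorphism inverse to $\ccdown$ (Theorem~\ref{Tfunctional}) and $\ccup(\ccdown(A))=A$, and similarly for $B$ and $C$, applying $\ccup$ to the compressed equation term by term — using multiplicativity to handle products such as $\ccdown(A)Y$ and $Y\ccdown(B)Y$, and $*$-equivariance to handle transposes — shows that $X$ solves the corresponding original equation in $\C^{N\times N}$, with $X=X^T$ whenever $Y=Y^T$. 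This direction is essentially immediate.

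The substance lies in the ``only if'' direction. Let $Q:=1_{\overline{\stratum{\pi}}}=\cW_\pi D_\pi^{-1}\cW_\pi^{*}=\sum_{k=1}^{m}|I_k|^{-1}\one{I_k}$; since $\cW_\pi^{*}\cW_\pi=D_\pi$, one checks $Q=Q^{*}=Q^{2}$, so $Q$ is the orthogonal projection onto $\im\up$, and $\Phi:=\up\comp\down$ is the map $X\mapsto QXQ$. The point is that $Q$ is a two-sided identity for the algebra $\overline{\stratum{\pi}}$, that is, $MQ=QM=M$ for every $M\in\overline{\stratum{\pi}}$, even though $\Phi$ is only a conditional expectation (and not multiplicative) on all of $\C^{N\times N}$. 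Inserting factors of $Q$ next to elements of $\overline{\stratum{\pi}}$ yields, for all $X,X'\in\C^{N\times N}$ and $M_0,M_1,M_2\in\overline{\stratum{\pi}}$,
\[
\Phi(M_1 X M_2)=M_1\,\Phi(X)\,M_2, \qquad \Phi(M_0 X M_1 X' M_2)=M_0\,\Phi(X)\,M_1\,\Phi(X')\,M_2,
\]
together with $\Phi(X^T)=\Phi(X)^T$ and $\Phi|_{\overline{\stratum{\pi}}}=\mathrm{Id}$. Now if the Lyapunov (resp.\ Sylvester, Riccati) equation has a solution $X\in\C^{N\times N}$, apply $\Phi$ to both sides: using these identities and $\Phi(C)=C$ we obtain the same equation for $\Phi(X)\in\overline{\stratum{\pi}}$ — e.g.\ $\Phi(AX+XA^T-XBX)=A\,\Phi(X)+\Phi(X)\,A^T-\Phi(X)\,B\,\Phi(X)$, with $\Phi(X)=\Phi(X)^T$ when $X=X^T$. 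Hence each equation is solvable in $\C^{N\times N}$ iff it is solvable in $\overline{\stratum{\pi}}$; applying the multiplicative, $*$-equivariant isomorphism $\ccdown:\overline{\stratum{\pi}}\to\C^{m\times m}$ then shows $\ccdown(\Phi(X))$ solves the compressed equation, finishing the argument.

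The only real obstacle — and the step warranting care — is the quadratic Riccati term: one must verify that $\Phi$ pulls a \emph{sandwiched} factor $B\in\overline{\stratum{\pi}}$ out of $XBX$, namely $\Phi(XBX)=\Phi(X)\,B\,\Phi(X)$. This holds because $B=QBQ$, so $XBX=(XQ)\,B\,(QX)$ and therefore $QXBXQ=(QXQ)\,B\,(QXQ)$; it is precisely here that one exploits that $Q$ is a unit for $\overline{\stratum{\pi}}$ rather than merely a projection, and one must resist invoking multiplicativity of $\Phi$, which genuinely fails on $\C^{N\times N}$.
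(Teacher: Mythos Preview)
The paper states this theorem without proof, presenting it ``on the lighter side'' as an illustration of the compression--inflation paradigm. Your argument is correct and supplies exactly the details the paper leaves implicit: the ``if'' direction is immediate from the $*$-isomorphism $\ccup$, and for the ``only if'' direction your key observation---that the conditional expectation $\Phi(X)=QXQ$ satisfies $\Phi(M_1XM_2)=M_1\Phi(X)M_2$ and $\Phi(XBX)=\Phi(X)B\Phi(X)$ whenever $M_1,M_2,B\in\overline{\stratum{\pi}}$, because $Q$ is the multiplicative unit of $\overline{\stratum{\pi}}$---is precisely what is needed, and you have verified it carefully. This is the natural proof in the paper's framework.
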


\subsection{Coherent matrix organization}

Complex multivariate data, such as those arising from the measurement
of neuronal structures, cannot be handled without regularization and
compression of the corresponding large matrices. Among the many
techniques to analyze and transform such matrices, the method of
\emph{coherent matrix organization} proposed by Gavish and Coifman
\cite{GC} stands out for its elegance and universality. These authors
propose organizing a matrix using a natural metric which clusters the
entries by proximity; the emerging tree of finer and finer partitions
offers a canonical compression by averaging along strata, with tight
control of the resulting approximation error.

The isogenic stratification of a matrix and the compression maps
discussed above align perfectly with Gavish and Coifman's idea. Our
purely theoretical setting is an extremal one, in the sense that we
employ the discrete metric for clustering. However, it is notable that
the isogenic structure and the associated inflation map offer a rapid
matrix-completion algorithm within a prescribed stratum, in the spirit
of recent advances in coherent matrix organization; see, for instance,
\cite{CBSW, Gross, MTCCK}. We do not expand here some clear
consequences of this non-accidental similarity.
%}}}

%{{{1 Bibliography

%}}}

\end{document}